\theoremstyle{plain}
\newtheorem{theorem}{Theorem}
\newtheorem{corollary}[theorem]{Corollary}
\newtheorem{definition}[theorem]{Definition}
\newtheorem{lemma}[theorem]{Lemma}
\newtheorem*{problem}{Problem}
\newtheorem{proposition}[theorem]{Proposition}
\theoremstyle{definition}
\newtheorem{example}[theorem]{Example}
\newtheorem{remark}[theorem]{Remark}
\numberwithin{equation}{section}
\numberwithin{theorem}{section}
\newcommand{\vol}{\mathrm{vol}}
\newcommand{\dive}{\operatorname{div}}
\renewcommand{\div}{\operatorname{div}}
\newcommand{\Hess}{\operatorname{Hess}}
\newcommand{\Lip}{\mathrm{Lip}}
\newcommand{\ZM}{\mathrm{ZM}}
\newcommand{\rr}{\mathbb{R}}
\renewcommand{\ss}{\mathbb{S}}
\newcommand{\nn}{\mathbb{N}}
\newcommand{\bb}{\mathbb{B}}
\newcommand{\hh}{\mathbb{H}}
\newcommand{\mm}{\mathbb{M}}
\newcommand{\riem}{\operatorname{Riem}}
\newcommand{\sect}{\operatorname{Sect}}
\newcommand{\ric}{\operatorname{Ric}}
\newcommand{\scal}{\mathrm{Scal}}
\newcommand{\supp}{\operatorname{supp}}
\newcommand{\tr}{\operatorname{trace}}
\newcommand{\dv}{\mathrm{dvol}}
\newcommand{\dx}{\mathrm{d}x}
\def\Xint#1{\mathchoice %avint average integral
   {\XXint\displaystyle\textstyle{#1}}%
   {\XXint\textstyle\scriptstyle{#1}}%
   {\XXint\scriptstyle\scriptscriptstyle{#1}}%
   {\XXint\scriptscriptstyle\scriptscriptstyle{#1}}%
   \!\int}
\def\XXint#1#2#3{{\setbox0=\hbox{$#1{#2#3}{\int}$}
     \vcenter{\hbox{$#2#3$}}\kern-.5\wd0}}
\def\avint{\Xint-}
\newcommand{\inj}{\mathrm{inj}}
\newcommand{\harm}{\mathrm{harm}}
\newcommand{\euc}{\mathrm{Euc}}
\newcommand{\conj}{\mathrm{conj}}
\newcommand{\diam}{\mathrm{diam}}
\newcommand{\dist}{\mathrm{dist}}
\renewcommand{\a}{\alpha}
\renewcommand{\b}{\beta}
\renewcommand{\d}{\delta}
\newcommand{\e}{\epsilon}
\newcommand{\g}{\gamma}
\renewcommand{\l}{\lambda}
\newcommand{\s}{\sigma}
\newcommand{\vp}{\varphi}
\newcommand{\vr}{\varrho}
\newcommand{\w}{\omega}
\renewcommand{\O}{\Omega}
\newcommand{\bA}{\mathbf{A}}
\newcommand{\bH}{\mathbf{H}}
\newcommand{\bPsi}{\mathbf{\Psi}}
\newcommand{\by}{\mathbf{y}}
\newcommand{\bx}{\bar{x}}
\newcommand{\tW}{\widetilde{W}}
\newcommand{\tM}{\widetilde{M}}
\newcommand{\tB}{\widetilde{B}}
\newcommand{\tg}{\widetilde{g}}
\newcommand{\FM}{\frak{M}}
\newcommand{\CC}{\mathcal{C}}
\newcommand{\CM}{\mathcal{M}}
\newcommand{\CN}{\mathcal{N}}
\newcommand{\CH}{\mathcal{H}}
\newcommand{\CJ}{\mathcal{J}}
\newcommand{\A}{\mathscr{A}}
\newcommand{\D}{\mathscr{D}}
\newcommand{\B}{\mathscr{B}}
\newcommand{\C}{\mathscr{C}}
\renewcommand{\H}{\mathscr{H}}
\newcommand{\I}{\mathscr{I}}
\renewcommand{\L}{\mathscr{L}}
\newcommand{\M}{\mathscr{M}}
\newcommand{\R}{\mathscr{R}}
\begin{document}

%%% front matter %%%

\title[Calder\'on-Zygmund theory on manifolds]{Global Calder\'on-Zygmund inequalities on complete Riemannian manifolds}
\author{Stefano Pigola}
\address{Universit\`a degli Studi di Milano-Bicocca\\ Dipartimento di Matematica e Applicazioni \\ Via Cozzi 55, 20126 Milano - ITALY}
\email{stefano.pigola@unimib.it}
\date{September 28, 2021}
\begin{abstract}
 This paper is a survey of  some recent results on the validity and the failure of global $W^{2,p}$ regularity properties of smooth solutions of the Poisson equation $\Delta u = f$ on a complete Riemannian manifold $(M,g)$. We review different methods developed to obtain a-priori $L^p$-Hessian estimates of the form $\| \Hess(u) \|_{L^p} \leq C_1 \| u \|_{L^p} + C_2 \| f \|_{L^p}$ under various geometric conditions on $M$ both in the case of real valued functions and for manifold valued maps. We also present explicit and somewhat implicit counterexamples showing that, in general, this integral inequality may fail to hold even in the presence of a lower sectional curvature bound. The r\^ole of a gradient estimate of the form $\| \nabla u \|_{L^{p}} \leq C_1 \| u \|_{L^p} + C_2 \| f \|_{L^p}$, and its connections with the $L^{p}$-Hessian estimate, are also discussed.
\end{abstract}

\maketitle
\tableofcontents

\section{Introduction}
The so called {\it $L^{p}$-Calder\'on-Zygmund inequality}, $1<p<+\infty$, is one of the cornerstones in the regularity theory of  elliptic equations. One has a $W^{2,p}$ solution $u$ of the Poisson equation $L(u)  =f$ on a domain $\O \Subset \rr^{n}$ for some second order elliptic operator $L$ (with, say, smooth coefficients), and would like to estimate the $L^{p}$ norm of the full Hessian of $u$ in terms of the datum $f$ and, possibly, of the function itself. In fact, for any  $\O' \Subset \O$, one has
\begin{equation}\label{intro1}
\| \Hess(u) \|_{L^{p}(\O')} \leq A  \| u \|_{L^{p}(\O)} + B \| f \|_{L^{p}(\O)}
\end{equation}
where the constants $A,B>0$ do not depend on the given solution $u$ but, instead, they depend on $\dim \rr^{n}=n$ and $p$, on the geometry of the domains $\O$ and $\O'$ and on the structure of the operator $L$ in terms of the modulus of continuity of the coefficients and the ellipticity constant. It may happen that inequality \eqref{intro1} is still valid with $\O' = \O = \rr^{n}$ and, in this case, we speak of a {\it global} Calder\'on-Zygmund inequality for the operator $L$. The prototypical operator for which  this global phenomenon appears is represented by the Euclidean Laplacian $L=\Delta$. In fact, one has the more striking estimate
\begin{equation}\label{intro2}
\| \Hess(u) \|_{L^{p}} \leq A \| \Delta u \|_{L^{p}},
\end{equation}
for every $u \in C^{\infty}_{c}(\rr^{n})$ and for a universal constant $A=A(m,p)>0$. As a consequence, since, by Cauchy-Schwarz, it is always true that
\[
\| \Delta u \|_{L^{p}} \leq \sqrt{n} \| \Hess(u) \|_{L^{p}}
\]
and, by the {\it interpolation inequalities},
\[
\| \nabla u \|_{L^{p}} \leq C \{ \| \Hess (u) \|_{L^{p}} + \| u \|_{L^{p}} \}
\]
it follows the remarkable fact  that, in the Euclidean space, the Sobolev norms
\[
\| u \|_{W^{2,p}} = \| u \|_{L^{p}} + \| \nabla u \|_{L^{p}} + \| \Hess(u) \|_{L^{p}}
\]
and
\[
\| u \|_{\tW^{2,p}} = \| u \|_{L^{p}}  + \| \Delta u \|_{L^{p}}
\]
are equivalent on $C^{\infty}_{c}(\rr^{n})$. In particular, via the closure of $C^{\infty}_{c}(\rr^{n})$, they define the same (Banach) Sobolev space. This has interesting connections with the spectral theory of the operator $L=-\Delta+1$. \smallskip

One is naturally led to ask if it is possible to extend these global aspects of the Calder\'on-Zygmund theory to other differential operators. Keeping for the moment as a background the Euclidean space, and  writing $L = g^{ij}\partial^{2}_{ij}+ 1^{st}\text{ order terms}$, we could interpret the second order coefficient matrix $[g^{ij}]$ as (the inverse of) a Riemannian metric $g$ on $\rr^{n}$. From this point of view, even at the local level,  the constant $C$ reflects the geometry of $(\rr^{n},g)$ and this could prevent any extension to the global level. Whence, the possibility of producing a global Calder\'on-Zygmund inequality for the differential operator $L$ could be understood as a manifestation of the good geometry of $(\rr^{n},g)$\footnote{as a matter of fact, in this generality, the operator presents a drift that could introduce further nontrivial effects on the {\it metric measure space geometry and analysis}.} as in the case of the standard Euclidean space $(\rr^{n},g_{E})$. But in fact, there is no need to fix a Euclidean background space. Everything can be developed from the very beginning on a Riemannian manifold $(M,g)$ with the most natural (both analytically and geometrically) choice of the operator, namely the Laplace-Beltrami operator $L=\Delta$ of $M$. Clearly, if on the one hand, from the local viewpoint (i.e. on a relatively compact domain) there is no qualitative change with respect to the flat Euclidean setting, on the other hand the influence of the geometry encoded in the constants $A,B>0$ implies that, as alluded to above, switching from the local inequality to the global one is not at all a bypass product. \smallskip

Along this way we have approached a land where on a complete, non-compact Riemannian manifold $(M,g)$ it is given a global solution of the Poisson equation $\Delta u = f$ and we aim at deducing, a-priori  on the base of the geometry of $M$, that $|\Hess(u)| \in L^{p}(M)$ whenever $u,f \in L^{p}(M)$, and that the following a-priori estimate
\[
\| \Hess(u) \|_{L^{p}(M)} \leq A  \| u \|_{L^{p}(M)} + B \| f \|_{L^{p}(M)}
\]
holds for some constants $A,B>0$ depending only on $\dim M,p$ and on the geometry of $M$. Beside this, we also ask for a companion global gradient estimate of the form
\[
\| \nabla u \|_{L^{p}(M)} \leq  A  \| u \|_{L^{p}(M)} + B \| f \|_{L^{p}(M)}
\]
so to conclude that, in fact,
\[
\| u \|_{W^{2,p}(M)} \leq  A  \| u \|_{L^{p}(M)} + B \| f \|_{L^{p}(M)}.
\]

A possible way\footnote{at least from the PDE viewpoint, but it is not the only classical approach. Another way, with a functional analytic slant, relies on {\it Riesz transform} techniques; \cite{Ste-Singular}. We shall adopt this viewpoint in the context of Riemannian manifolds.} to attack this problem consists in mimicking what is typically done in the Euclidean space. Namely, we take a solution $u$ represented via the Green kernel of the Laplace operator and we estimate this kernel and its derivatives on the base of the geometric assumptions on the underlying space. This approach, that enables one to deduce also the existence of a solution, works pretty well when $(M,g)$ possesses a minimal, positive Green function\footnote{in the potential theoretic terminology, $M$ is called {\it non-parabolic}.} and the (Ricci) curvature is nonnegative. In this setting one even gets precise pointwise estimates that imply an $L^{p}$-control on the growth over increasing balls. We refer the reader to the fundamental work \cite{NST-JDG} by L. Ni, Y, Shi and L.F. Tam. In hyperbolic-like situations, i.e., when the (Ricci) curvature is lower bounded and there is a spectral gap, (existence and) a growth control on the solution is investigated by L. Ni, \cite{Ni-Indiana},  and in the recent papers \cite{MSW-AdvMath} by O. Munteanu, C.-J. Sung and J. Wang, and \cite{CMP-CalcVar} by G. Catino, D. Monticelli and F. Punzo. Further studies under the validity of the so called {\it weighted Poincar\'e inequalities}\footnote{where it is assumed the nonnegativity of the spectrum of a Schr\"odinger operator whose potential term is the Ricci lower bound.} can be found in the preprints \cite{MSW-preprint, CMP-preprint}. It is interesting to point out that, however, none of these nice papers investigates, a-priori, the global $W^{2,p}$ class of the solutions of the Poisson equation. \smallskip

The aim of the present paper is to give a survey of some recent results and techniques that, avoiding any use of the Green kernel of the space,  enable one to deduce the global validity of a-priopri $L^{p}$ Hessian and gradient estimates for solutions of the Poisson equation on a Riemannian manifold.
\smallskip

The survey will move from the (quite well understood) Hilbertian case to the $L^{p}$ setting, showing via counterexamples the extent to which the geometric assumptions are needed, and touching the case of manifold valued maps, where the Poisson equation takes the form of the prescription of the {\it  tension field} along a given map.\smallskip

The exposition is clearly sensitive of my personal taste and  based on my own contributions to this kind of global Calder\'on-Zygmund estimates on manifolds. I hope that  related (even fundamental) results in the literature have not been missed. On the other hand, there are topics that will not be covered, although they are interesting and very close to the subject of this survey. Among them, I would like to mention the so called {\it disturbed} or {\it weighted} Calder\'on-Zygmund inequalities where the Riemannian measure is perturbed by a suitable weight that encodes the (unbounded) geometry of the space. The interested reader is referred to the papers \cite{Am-MathZ} by E. Amar and \cite{IRV-IMRN} by D. Impera, M. Rimoldi and G. Veronelli.

\section*{Acknowledgments}

This paper is modelled on a talk given on February 2020 in the {\it S\'eminaire Th\'eorie Spectrale et G\'eom\'etrie} at the Institute Fourier, Grenoble. It's my pleasure to thank G\'erard Besson, Baptiste Devyver, Luca Rizzi and Andrea Seppi for the invitation, for the amazing hospitality and for interesting discussions around the topics of the talk. I'm also grateful to Gilles Carron, Carlo Mantegazza, Stefano Meda and Giona Veronelli for their interest in this work, for enlightening suggestions and for several corrections over the preliminary version of the paper.

\section{Basic notation}
{\bf Manifolds.} In what follows, unless otherwise specified, we always assume that $(M,g)$ is a smooth, connected {\it Riemannian manifold} of dimension $\dim M = m \geq 2$ and without boundary $\partial M = \emptyset$. In local coordinates $(x^{1},\cdots,x^{n})$, the metric coefficients are denoted by $g_{ij} = g(\partial_{i}, \partial_{j})$ where $\partial_{i} = \partial / \partial x^{i}$. Very often, to simplify the notation, we write $|X|^{2}$ instead of $g(X,X)$.\smallskip

{\bf Curvatures}. The {\it Riemann curvature tensor} of $(M,g)$ is denoted by
\[
\riem(X,Y,Z,W) = g(D_{X}D_{Y}Z - D_{Y}D_{X}Z - D_{[X,Y]}Z, W)
\]
where $D$ is the {\it Levi-Civita connection}. This latter gives rise to the {\it sectional curvature} of $M$ along the $2$-pane spanned by the linearly independent tangent vectors $X$ and $Y$ by the formula
\[
\sec(X\wedge Y) = \frac{\riem(X,Y,Y,X)}{g(X,X)g(Y,Y) - g(X,Y)^{2}}.
\]
By saying that $M$ has curvature bounded from below (resp. from above) by $K$ we mean that, for every $x \in X$ e for every $2$-plane $\Pi_{x}$ tangent to $M$ at $x$ it holds $\sect(\Pi_{x}) \geq K$ (resp. $\leq K$).

If we trace the Riemann curvature tensor we get the {\it Ricci curvature}
\[
\ric(X,X) = \sum_{j=1}^{n}\riem(X,E_{j},E_{j},X)
\]
where $\{ E_{j}\}$ is a local o.n. frame field. Equivalently, if $|X| = 1$ and $E_{1},\cdots,E_{m-1}\in X^{\perp}$, then
\[
\ric(X,X) = \sum_{j=1}^{n} \sect(X\wedge E_{j}).
\]
Inequalities involving the (metric tensor $g$ and the) Ricci tensors $\ric$ are always understood in the quadratic form sense. Namely, $\ric$ is lower bounded (resp. upper bounded) by $K$ whenever $\ric(X,X) \geq K |X|^{2}$ (resp. $\leq K |X|^{2}$) for every $x \in X$ and for every vector $X$ tangent to $M$ at $x$. In this case, we simply write $\ric \geq K$.

Finally, by tracing the Ricci tensor one gets the {\it scalar curvature} function
\[
\scal = \sum_{j=1}^{m} \ric (E_{j},E_{j}),
\]
where, again, $\{ E_{j}\}$ is any local o.n. frame field.\smallskip

{\bf Operators}. The {\it gradient}, the {\it Hessian} and the {\it Laplace-Beltrami operator} of a  smooth function $u$ are denoted, respectively, by $\nabla u$, $\Hess(u) = Dd u$ and
\[
\Delta u = \tr_{g} \Hess(u) = \dive \nabla u.
\]
Here,  the divergence is defined with the sign convention $\div X = \sum_{j=1}^{m} g(D_{E_{i}}X,E_{i})$ for a local o.n. frame field $\{ E_{j}\}$. In particular, the Laplace-Beltrami operator is nonpositive.\smallskip

{\bf Metric objects}. The symbol $d(x,y)$ stands for the {\it intrinsic (length) distance} of $M$ between $x$ and $y$. The corresponding open ball centered at $x$ and of radius $R>0$ is $B_{R}(x)$.\smallskip

{\bf Measure objects}. Unless otherwise is specified, integrals are always computed with respect to the {\it Riemannian measure} $\dv = \sqrt{\det[g_{ij}]}dx^{1}\cdots dx^{m}$. To simplify the notation we often omit to specify the measure and simply write $\int_{M} f$. The $L^{p}$-norm of a function (or a tensor) is denoted with $\| \cdot \|_{L^{p}}$ being understood that it is referred to the whole space $M$. In case it is needed to restrict the attention to functions or tensors defined on a domain $\O$ of $M$ we write $\| \cdot \|_{L^{p}(\O)}$

\section{The $L^{2}$ setting: preparatory discussions}

\subsection{$L^{2}$ gradient and Hessian estimates in $C^{\infty}_{c}$}
To start with, let us assume that $(M,g)$ is a compact Riemannian manifold of dimension $\dim M = m\geq 2$. We consider a solution $u \in C^{\infty}(M)$ of the {\it Poisson equation}
\[
\Delta u = f(x),\quad \text{on } M,
\]
for some given datum $f \in C^{\infty}(M)$. Integrating by parts this equation and using  Young inequality we obtain, for every $\e >0$,
\begin{align*}
 \int_{M} |\nabla u|^{2} &= - \int_{M}f\, u\\
 &\leq \frac{\e^{2}}{2} \int_{M}u^{2} + \frac{1}{2\e^{2}}\int_{M} f^{2} 
\end{align*}
that is, the following {\it $L^{2}$-gradient estimate} holds:
\begin{equation}\label{L2gradient}
\| \nabla u \|_{L^{2}} ^{2} \leq \| f\, u \|_{L^{1}} \leq \frac{\e^{2}}{2} \| u \|_{L^{2}} ^{2 } + \frac{1}{2\e^{2}} \|f \|_{L^{2}} ^{2}.
\end{equation}
On the other hand, since $M$ is compact its Ricci tensor is lower bounded and we can assume that $\ric \geq -K^{2}$, with $K \geq 0$. Integrating by parts the Bochner identity
 \[
 \frac{1}{2}\Delta |\nabla u|^{2} = |\Hess(u)|^{2}+  g(\nabla \Delta u ,\nabla u) + \ric(\nabla u,\nabla u)
 \]
we therefore get
\begin{align*}
0 &= \frac{1}{2}\int_{M} \Delta |\nabla u|^{2}\\
&= \int_{M}|\Hess(u)|^{2} + \int_{M} g ( \nabla \Delta u, \nabla u ) + \int_{M} \ric(\nabla u, \nabla u)\\
&\geq \int_{M}|\Hess(u)|^{2} - \int_{M} (\Delta u)^{2} - K^{2}\int_{M} |\nabla u|^{2}\\
&= \int_{M}|\Hess(u)|^{2} - \int_{M} (\Delta u)^{2} +K^{2} \int_{M} u \Delta u.
\end{align*}
Whence, using again Young inequality in the last integral we conclude the validity of the  {\it ${L^{2}}$-Hessian estimate}:
\begin{equation}\label{L2Hessian}
\| \Hess(u) \|_{L^{2}}^{2}\leq
\frac{K^{2}\e^{2}}{2}\| u \| _{L^{2}}^{2} +\left(1+\frac{K^{2}}{2\e^{2}} \right)  \| f \|_{L^{2}}^{2}.
\end{equation}
Observe that, in order to derive \eqref{L2gradient} and \eqref{L2Hessian}, the compactness of $M$ is used to perform integration by parts and to insure that the Ricci tensor is lower bounded. It follows that everything survives without changes in the non-compact setting up to using compactly supported functions and to assume the validity of the curvature condition. Summarizing, we have  obtained the following global estimate that, in elliptic regularity theory, is known as the {\it $L^{2}$ Calder\'on-Zygmund inequality}.
\begin{proposition}\label{prop-L2CZ-cs}
 Let $(M,g)$ be a Riemannian manifold of dimension $m \geq 2$ and satisfying $\ric \geq - K^{2}$ for some constant $K \geq 0$. Then, for any given $\e >0$, there exists a constant $C = C(K,\e)>0$ such that the following estimate
 \begin{equation}\label{L2CZ-cs}
 \| u \|_{W^{2,2}}:= \| u \|_{L^{2}} + \| \nabla u \|_{L^{2}} + \| \Hess(u)\|_{L^{2}} \leq C \left \{ \| u \|_{L^{2}} + \| \Delta u \|_{L^{2}} \right\}
 \end{equation}
holds for every function $u \in C^{\infty}_{c}(M).$
\end{proposition}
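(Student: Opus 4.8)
The plan is to note that, in the computations carried out above for a compact $M$, compactness entered only twice: to justify the integrations by parts (absence of boundary terms) and to guarantee a lower bound on $\ric$. Both ingredients are available in the present setting as well — integration by parts is legitimate because we test against $u \in C^{\infty}_{c}(M)$, so all the relevant densities ($u\,\nabla u$, $(\Delta u)^{2}$, $|\nabla u|^{2}$, $u\,\Delta u$, $|\nabla u|^{2}$) are compactly supported, while the Ricci lower bound is now a standing hypothesis rather than a consequence of compactness. Accordingly, I would simply fix $u \in C^{\infty}_{c}(M)$, set $f := \Delta u \in C^{\infty}_{c}(M)$, and rerun the two arguments verbatim.

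Concretely: multiplying $\Delta u = f$ by $u$ and integrating by parts gives $\int_{M} |\nabla u|^{2} = -\int_{M} f u$, and Young's inequality then yields the $L^{2}$-gradient estimate \eqref{L2gradient}. Next, integrating the Bochner identity over $M$ and using that $\int_{M} \Delta |\nabla u|^{2} = 0$ (since $\nabla u$ is compactly supported), then estimating $\ric(\nabla u,\nabla u) \geq -K^{2} |\nabla u|^{2}$ and rewriting $-K^{2}\int_{M} |\nabla u|^{2} = K^{2} \int_{M} u\,\Delta u$, and applying Young's inequality once more, produces the $L^{2}$-Hessian estimate \eqref{L2Hessian}. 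Finally, I would take square roots in \eqref{L2gradient} and \eqref{L2Hessian}, use $\sqrt{a+b} \leq \sqrt{a} + \sqrt{b}$, recall that $\|f\|_{L^{2}} = \|\Delta u\|_{L^{2}}$, and add the trivial bound $\|u\|_{L^{2}} \leq \|u\|_{L^{2}}$; each of the three summands $\|u\|_{L^{2}}$, $\|\nabla u\|_{L^{2}}$, $\|\Hess(u)\|_{L^{2}}$ is then dominated by a constant multiple of $\|u\|_{L^{2}} + \|\Delta u\|_{L^{2}}$, with a constant depending only on $K$ and the chosen $\e$. This is exactly \eqref{L2CZ-cs}.

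There is essentially no analytic obstacle in this statement — its very point is that completeness, and even non-compactness, play no role once one restricts to $C^{\infty}_{c}(M)$ — so the only items requiring (routine) care are checking that each integration by parts has a vanishing boundary contribution because the relevant integrand is compactly supported, and bookkeeping the dependence of the final constant $C$ on $K$ and $\e$. The genuinely substantive issue, deliberately left outside the present statement and addressed later in the survey, is whether \eqref{L2CZ-cs} persists for an arbitrary $u$ in the natural Sobolev space on a complete non-compact $M$ — equivalently, whether $C^{\infty}_{c}(M)$ is dense there — which is a global problem sensitive to the geometry of $M$.
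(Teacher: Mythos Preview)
Your proposal is correct and follows essentially the same approach as the paper: the proposition is stated there as a direct summary of the preceding computations \eqref{L2gradient} and \eqref{L2Hessian}, with the explicit remark that compactness of $M$ was used only to integrate by parts and to ensure a Ricci lower bound, both of which are available here via compact support of $u$ and the standing hypothesis. Your write-up makes this explicit and adds the minor bookkeeping of combining the two estimates into \eqref{L2CZ-cs}, which the paper leaves implicit.
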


\subsection{Isometric immersions}\label{section-isometric-euclidean}
In the context of isometric immersions into the Euclidean space,  $L^{p}$-Hessian estimates of the form \eqref{L2Hessian}, when applied to the component functions of the map, have a clear geometric interpretation and display some interesting applications to the pre-compactness theory, as we are going to outline.\smallskip

Let $ \bPsi =(\Psi^{1},\cdots,\Psi^{n}): M^{m} \to \rr^{n}$ be a smooth immersion of the compact, $m$-dimensional manifold $M$ into $\rr^{n}$. The standard flat metric $g_{E}$ of the Euclidean space is pulled back to $M$ via $\bPsi$ and gives rise to the {\it $1^{\text{st}}$ fundamental form} of the immersion
\begin{equation}
g = \bPsi^{\ast} g_{E}. 
\end{equation}
In particular, we have that
\begin{equation}\label{isom-lip}
| d\bPsi|^{2} = \sum_{i=1}^{n} |\nabla \Psi^{i}|^{2} \equiv 1
\end{equation}
that is, $\bPsi$ is a $1$-Lipschitz map. The extrinsic geometry of the submanifold is governed by its second fundamental form which is defined as the family of vector valued bilinear forms $\bA_{x} : T_{x}M \times T_{x}M \to \rr^{n}$ such that
\begin{equation}
\bA_{x}(v,w) = ( \Hess (\Psi^{1})(x)(v,w),\cdots, \Hess(\Psi^{n})(x)(v,w)).
\end{equation}
Up to interpreting the Hessian of a vector valued map as the vector filed of the Hessian of its components, the previous formula can be written in the form
\begin{equation}\label{isom-hessian}
 \bA = {\mathrm{\mathbf{Hess}}}(\bPsi).
\end{equation}
Observe that by differentiating the identity \eqref{isom-lip} it follows that, for every $v\in T_{x}M$, $\bA_{x} (v,v) \in T_{x}M^{\perp}= N_{x}M \subseteq \rr^{n}$ the normal space to $M$ at $x$. Obviously, here we are identifying $T_{x}M \approx d_{x}\bPsi(T_{x}M) \subseteq \rr^{n}$. Taking the trace of the second fundamental form gives the normal vector field $\bH(x) \in N_{x}M$, called the {\it mean curvature vector field of the immersion}, and from \eqref{isom-hessian} we get
\begin{equation}\label{isom-lap}
 \bH(x) = \mathbf{\Delta} \bPsi
\end{equation}
where, as above, the Laplacian of a vector valued map is nothing but the vector of the Laplacians of its components. The immersion $\bPsi:M \to \rr^{n}$ is called minimal if $\bH\equiv 0$ and totally geodesic if $\bA \equiv 0$. In this latter case, $\bPsi(M)$ is inside an affine $m$-plane of $\rr^{n}$.

With this notation, if we have an $L^{2}$-Hessian estimate like \eqref{L2Hessian} and we apply it to each component of $\bPsi$ we get
\begin{equation}\label{sff}
\int_{M} |{\bf A} |^{2} \leq \mathrm{CZ}_{2} \left\{ \int_{M} \dist_{\rr^{n}}(\bPsi , {\bf 0})^{2} + \int_{M} |{\bf H}|^{2 }\right\},
\end{equation} 
where $\mathrm{CZ}_{2}$ depends suitably on the geometry of $M$. Namely, the $L^{2}$-norm of the second fundamental form is controlled by the $L^{2}$-norm of the mean curvature via an extrinsic $L^{2}$-diameter bound. In Section \ref{section-maps} we will extend this kind of estimate to general ambient manifolds and general exponents $p$ and, moreover,  the dependence of $\mathrm{CZ}_{p}$ on the $C^{1,\a}$ geometry will be emphasized.

\begin{remark}
Note that, obviously, the extrinsic diameter of the source compact manifold:
\[
\diam_{\rr^{n}}(M) = \sup\{ \dist_{\rr^{n}} (\bPsi(x) , \bPsi(y) ) : x,y \in M \}
\]
is dominated by its intrinsic diameter:
\[
\diam(M) = \sup\{d(x,y): x,y \in M \}.
\]
We point out that this latter, in turn, can be estimated in terms of the volume of $M$ and its {\it Euclidean radius},  namely, the radius (independent on the center) of the largest ball inside a local coordinate chart where $g$ and $g_{E}$ satisfy $2^{-1} g_{E} \leq g \leq 2 g_{E}$ in the sense of quadratic forms. We shall come back to this in  Section \ref{section-radii} below where we will collect some of the main definitions of Riemannian radii and their basic estimates. Here, we limit ourselves to observe that, given $V,E>0$, if
 \begin{equation}\label{alternative}
 i)\, \vol M \leq V,\quad ii)\, r_{\euc}(x) \geq E,\, \forall x\in M,
\end{equation}
then there exists a constant $D=D(V,E)>0$ such that
\[
\diam(M) \leq D.
\]
Simply choose $x,y \in M$ such that $\diam M = d(x,y)$, take a minimizing geodesic $\g:[0,1] \to M$ connecting $\g(0) =x$ to $\g(1) = y$ and put over $\g([0,1])$  a number $h = \lfloor \diam M / 4E \rfloor \in \nn$ of disjoint balls $B_{E/2}(x_{i})$ centered at $x_{j} \in \g([0,1])$. Since, by \eqref{alternative} ii), $\vol B_{E/2}(x_{j}) \geq 2^{-m/2} \vol_{\euc} \bb_{E}(0)=:v$ and the balls are disjoint, we get $h \cdot v \leq V$. Whence the announced diameter estimate follows.
\end{remark}

When combined with elliptic regularity theory in the geometric setting of convergence of Riemannian manifolds,  this enables one to obtain pre-compactness conclusions in the following class
{\small \[
 \M(m,n,V,E,H,\mathrm{CZ}_{2}) = \{ \bPsi:(M^{m},g) \to \rr^{n}: M \text{ cmpt }, \vol(M) \leq V, r_{\euc}\geq E,|\bH|\leq H, \eqref{sff}\text{ holds}\}.
\]
}More precisely, we obtain the following corollary of a by now classical result for surfaces due to  J. Langer, \cite {La}, recently extended to any dimension and codimension by  P. Breuning, \cite{Br}. We do not insist in introducing all the definitions related to the geometric convergence theory  but instead we leave the statement in a rather ``suggestive form'' and strongly recommend the interested reader to consult the very well written and informative paper \cite{Sm} by G. Smith.
\begin{proposition}
 Let us given a sequence of immersions
 \[
 \{ \bPsi_{k}:M_{k} \to \rr^{n}\} \subseteq \M(m,n,V,E,H,\mathrm{CZ}_{2}).
 \]
 Then, there exists a $C^{1,\a}$ compact Riemannian manifold $(M,g)$ of $\dim M =m$ and a $C^{1,\a}$ isometric immersion $\bPsi : M \to \rr^{n}$ such that, for a suitable subsequence $\{ k_{j} \}$ and a sequence of points $\by_{j} \in \rr^{n}$ it holds that $M_{k_{j}} \to M$ and $\bPsi_{k_{j}} - \by_{j} \to \bPsi$ in the (Cheeger-Gromov) $C^{1,\a}$-topology.
\end{proposition}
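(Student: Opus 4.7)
The plan is to reduce the statement to an application of the Langer--Breuning pre-compactness theorem and to use the Calder\'on--Zygmund inequality \eqref{sff} only as the bridge that converts the geometric data built into the class $\M(m,n,V,E,H,\mathrm{CZ}_{2})$ into the uniform $L^{2}$-bound on the second fundamental form required by that theorem. The remaining hypotheses of Langer--Breuning --- uniform volume, uniform lower bound on the Euclidean radius, and uniform extrinsic diameter bound --- come almost for free from the definition of the class.

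First I would normalize each immersion by a translation. For each $k$, pick a base point $x_{k}\in M_{k}$ and set $\by_{k}:=\bPsi_{k}(x_{k})$; since the second fundamental form and the mean curvature are translation-invariant in $\rr^{n}$, the translated sequence $\bPsi_{k}-\by_{k}$ still lies in the class and satisfies $(\bPsi_{k}-\by_{k})(x_{k})=\mathbf{0}$. The ball-packing argument recalled after \eqref{alternative} converts the uniform controls $\vol(M_{k})\leq V$ and $r_{\euc}\geq E$ into an intrinsic diameter bound $\diam(M_{k})\leq D(V,E)$; combined with the $1$-Lipschitz property \eqref{isom-lip} this yields the pointwise control $\dist_{\rr^{n}}(\bPsi_{k}-\by_{k},\mathbf{0})\leq D$. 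Plugging this estimate together with the assumption $|\bH_{k}|\leq H$ into \eqref{sff} applied to $\bPsi_{k}-\by_{k}$, I obtain
\[
\int_{M_{k}}|\bA_{k}|^{2}\leq \mathrm{CZ}_{2}\bigl(D^{2}+H^{2}\bigr)V,
\]
which is the desired uniform $L^{2}$-bound on the second fundamental forms. With this in hand, Langer--Breuning produces a subsequence $\{k_{j}\}$, a limit $C^{1,\a}$ compact Riemannian manifold $(M,g)$ with $\dim M=m$, and a $C^{1,\a}$ isometric immersion $\bPsi:M\to\rr^{n}$ such that $\bPsi_{k_{j}}-\by_{k_{j}}\to\bPsi$ in the Cheeger--Gromov $C^{1,\a}$-topology, which is precisely the conclusion.

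The only delicate point, and the one I expect to be the main obstacle, is not in the elementary reduction above but in checking that the $L^{2}$-integrability of $\bA$ supplied by $\mathrm{CZ}_{2}$ is strong enough to meet the regularity threshold of Langer--Breuning. Internally, that theorem proceeds by choosing harmonic coordinates in which each component of the immersion satisfies an elliptic equation with right-hand side controlled by $\bH$, and then bootstraps the $L^{p}$-bound on $\bA$ into a $C^{1,\a}$-bound on the chart via elliptic regularity and Morrey embedding; the exponent $p$ needed to close this loop is dimension-dependent. For $m=2$ one is exactly in Langer's original setting; for general $m$, the extension due to Breuning is what makes the scheme above go through, and verifying its applicability with the exponent $p=2$ (possibly upgraded by interpolating with the pointwise bound on $\bH$) is the step I would treat most carefully.
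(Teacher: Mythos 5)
Your proposal takes the same route the paper does: reduce the statement to a black-box application of the Langer--Breuning pre-compactness theorem, using the class $\M(m,n,V,E,H,\mathrm{CZ}_{2})$ to supply its hypotheses. The paper in fact gives no written proof at all — it merely states the Proposition as ``a corollary'' of Langer and Breuning and refers the reader to Smith's survey, explicitly warning that it is leaving things in a ``suggestive form.'' What you have done is to fill in the reduction: the translation-normalization (legitimate, since $\bA$, $\bH$, and hence \eqref{sff} are invariant under translating the ambient target and recentering the origin), the ball-packing argument that converts $\vol\leq V$ and $r_{\euc}\geq E$ into an intrinsic diameter bound $D(V,E)$, the $1$-Lipschitz control on $\dist_{\rr^{n}}(\bPsi_{k}-\by_{k},\mathbf{0})$, and finally the uniform bound $\int_{M_{k}}|\bA_{k}|^{2}\leq \mathrm{CZ}_{2}(D^{2}+H^{2})V$. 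All of this is correct and consistent with what the paper gestures at.

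The caveat you raise in your last paragraph is the right one, and it deserves a sharper formulation than you give it. Langer's original theorem requires an $L^{p}$ bound on $|\bA|$ with $p>2$ for surfaces, and Breuning's generalization requires $p>m$; the threshold comes from needing $W^{2,p}\hookrightarrow C^{1,\a}$ via Morrey. So even for $m=2$ one is \emph{not} ``exactly in Langer's original setting'' with a bare $L^{2}$ bound — the exponent $p=2$ sits at the borderline and is excluded. Upgrading $L^{2}$ control on $|\bA|$ to $L^{p}$, $p>m$, is not a straightforward interpolation with the $L^{\infty}$ bound on $\bH$, since $\bH$ is only the trace of $\bA$ and controls none of its trace-free part; to bootstrap honestly one would need $\mathrm{CZ}_{p}$ for $p>m$ (or elliptic regularity in charts whose coefficient regularity must itself be verified, since the bilipschitz control from $r_{\euc}\geq E$ is only $C^{0}$). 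This is genuinely a gap between what the class $\M(m,n,V,E,H,\mathrm{CZ}_{2})$ grants and what the Langer--Breuning hypotheses demand — a gap the paper glides over by not giving a proof. Since you flagged the issue and your reduction otherwise matches the paper's intent, the proposal is acceptable as a reconstruction of the intended argument, but you should state plainly that closing it requires either replacing $\mathrm{CZ}_{2}$ with $\mathrm{CZ}_{p}$, $p>m$, in the definition of the class, or supplying the extra bootstrap explicitly.
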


\subsection{From $C^{\infty}_{c}$ to $L^{2}$: Sobolev spaces and cut-off functions}

 A natural question is the following:\smallskip
\begin{problem}
 What happens if $(M,g)$ is a complete, noncompact Riemannian manifold satisfying $\ric \geq -K^{2}$ and the class of functions $C^{\infty}_{c}(M)$ is replaced by  $L^{2}(M)$? Do the $L^{2}$ gradient and Hessian estimates survive?
\end{problem}

The answer is yes but switching from $C^{\infty}_{c}$ to $L^{2}$ is not completely trivial as it could appear. The crucial ingredients will be represented by special cut-off functions with controlled derivatives. We are going to split the discussion in two parts.\smallskip

\subsubsection{$L^{2}$-gradient estimate: $1^{st}$ order cut-off functions}

These estimates are naturally related to the spectral properties of the Laplace-Beltrami operator. \smallskip

Recall that a symmetric, densely defined, unbounded operator $T : \D\subseteq \CH \to \CH$ on a Hilbert space $\CH$ is called {\it essentially self-adjoint} if its closure $\bar T$ (in the graph sense) is a self-adjoint operator. It was first observed by R. Strichartz in \cite{Str-JFA}, that on a geodesically complete Riemannian manifold $(M,g)$ the Laplace operator $-\Delta : C^{\infty}_{c}(M) \subseteq L^{2}(M) \to L^{2}(M)$ is essentially self-adjoint. In fact, according to \cite[pp. 136, 137]{ReSi}, it is enough to show that for any $\l >0$, the equation
\[
\Delta u = \l u \quad \text{on } M
\]
has no non-zero $L^{2}$-solutions. One can verify that this is the case throughout a Caccioppoli-type inequality of the form
\[
\int_{M} |\nabla v|^{2} \vr_{k}^{2} \leq C \int_{M} v^{2} |\nabla  \vr_{k}|^{2}
\]
where $C>0$ is an absolute constant, $v = u_{\pm}$ and $\{ \vr_{k}\} \subseteq C^{\infty}_{c}(M)$ is a sequence of cut-off functions satisfying
\begin{equation}\label{cutoff1}
a)\, 0 \leq \vr_{k} \leq 1,\quad b)\, \vr_{k} \nearrow 1,\quad c)\, \|\nabla \vr_{k} \|_{L^{\infty}} \searrow 0,
\end{equation}
as $k \to \infty$. The existence of this special sequence of {\it $1^{st}$ order cut-off functions} (in the language of \cite{Gu-JGEA}) is in fact a characterization of the geodesic completeness of $(M,g)$; see e.g. \cite{PS-Ensaios} for an $\infty$-parabolic viewpoint.

Thus, if we define
\[
\D(-\Delta_{max}) = \{ u \in L^{2}(M): \text{the distribution }\Delta u \in L^{2}(M)\}
\]
and
\[
\D(-\Delta_{min}) = \{ u \in L^{2}(M):  u = \lim_{L^{2}}\vp_{j}, \,\exists  \lim_{L^{2}}\Delta \vp_{j}=:\Delta u,\, \text{ for some } \{ \vp_{j}\}\subset C^{\infty}_{c}(M)\}
\]
the essential self-adjointness of $-\Delta$ implies that
\begin{equation}\label{essself}
\D(-\Delta_{max}) = \D(-\Delta_{min}).
\end{equation}
Now, let $u, \Delta u \in L^{2}(M)$. Since
\[
 u = \lim_{L^{2}}\vp_{j},\quad \Delta u = \lim_{L^{2}}\Delta \vp_{j},
\]
with $\vp_{j}\in C^{\infty}_{c}(M)$, by applying to $\vp_{j}$ the gradient estimate \eqref{L2gradient} we get
 \begin{equation}\label{vf}
 \| \nabla \vp_{j}  \|^{2}_{2} \leq \frac{\e^{2}}{2} \| \vp_{j} \|_{L^{2}}^{2}  + \frac{1}{2\e^{2}} \| \Delta \vp_{j} \|_{L^{2}}^{2}.
 \end{equation}
This latter yields $\nabla u = \lim_{L^{2}} \nabla \vp_{j}$. Indeed, clearly, the sequence of $L^{2}$ vector fields $\{ \nabla \vp_{j}\}$ is Cauchy and, therefore, it converges in $L^{2}$ to some vector field $X$. A weak convergence argument now shows that $X = \nabla u$ as claimed. It follows that we can take the limit as $j \to +\infty$ into \eqref{vf} and deduce the validity of the following
\begin{proposition}\label{prop-L2gradient}
 Let $(M,g)$ be a complete Riemannian manifold and let $u \in C^{\infty}(M)$ be a solution of the Poisson equation $\Delta u = f$ on $M$. If $u,f \in L^{2}(M)$ then $|\nabla u| \in L^{2}(M)$ and, for any $\e>0$,
 \[
 \| \nabla u  \|^{2}_{2} \leq \frac{\e^{2}}{2} \| u \|_{L^{2}}^{2}  + \frac{1}{2\e^{2}} \| f \|_{L^{2}}^{2}.
\]
\end{proposition}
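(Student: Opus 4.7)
My plan is to follow the path already sketched in the discussion preceding the statement: approximate $u$ by compactly supported smooth functions, apply the $C^\infty_c$-estimate \eqref{L2gradient} to the approximants, and pass to the limit. The key analytic input is Strichartz's theorem \cite{Str-JFA}, recorded above, that on a geodesically complete $(M,g)$ the Laplace--Beltrami operator $-\Delta$ is essentially self-adjoint with core $C^\infty_c(M)$; equivalently, the identification \eqref{essself} of maximal and minimal domains holds. This is precisely the ingredient that upgrades an integration-by-parts identity valid on $C^\infty_c(M)$ to a statement about arbitrary $u \in L^2(M)$ with $\Delta u \in L^2(M)$.

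With that identification in hand, the first step is to choose $\{\varphi_j\} \subset C^\infty_c(M)$ with $\varphi_j \to u$ and $\Delta\varphi_j \to \Delta u = f$ simultaneously in $L^2(M)$. Applying \eqref{L2gradient} to the linear combination $\varphi_j - \varphi_k$ yields
\[
\|\nabla\varphi_j - \nabla\varphi_k\|_{L^2}^2 \leq \tfrac{1}{2}\|\varphi_j - \varphi_k\|_{L^2}^2 + \tfrac{1}{2}\|\Delta\varphi_j - \Delta\varphi_k\|_{L^2}^2,
\]
so $\{\nabla\varphi_j\}$ is Cauchy in $L^2(TM)$ and converges to some vector field $X \in L^2(TM)$. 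The second step is to identify $X$ with the smooth gradient $\nabla u$. For any $Y \in C^\infty_c(TM)$, integration by parts gives
\[
\int_M \langle \nabla\varphi_j, Y\rangle \, \dv \;=\; -\int_M \varphi_j \operatorname{div} Y \, \dv,
\]
and passing to the limit in $j$ along both sides produces $\int_M \langle X, Y\rangle\, \dv = -\int_M u \operatorname{div} Y\, \dv$. Thus $X$ is the distributional gradient of the smooth function $u$, hence $X = \nabla u$ almost everywhere. Applying \eqref{L2gradient} to $\varphi_j$ and letting $j\to\infty$ then gives the claimed inequality.

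The step I expect to be the actual engine of the argument is the invocation of essential self-adjointness; every other step is bookkeeping once the approximation is in place. Completeness of $(M,g)$ enters at exactly that point, through the existence of the first-order cut-off functions \eqref{cutoff1}, which in turn feed into the Caccioppoli-type estimate used to exclude $L^2$ eigenfunctions of $\Delta$ corresponding to positive eigenvalues. It is worth noting in passing that one could alternatively bypass the approximation and argue spectrally: essential self-adjointness produces a functional calculus, and the Green formula $\|\nabla v\|_{L^2}^2 = \langle -\Delta v, v\rangle_{L^2}$ extended to $\mathcal{D}(-\Delta)$ combined with Young's inequality immediately yields the estimate; but the approximation route seems more transparent and fits the narrative of the survey.
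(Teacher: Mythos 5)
Your proof is correct and follows essentially the same route as the paper: invoke Strichartz's essential self-adjointness to produce a $C^\infty_c$ approximating sequence with both $\varphi_j \to u$ and $\Delta\varphi_j \to f$ in $L^2$, apply the compactly-supported gradient estimate \eqref{L2gradient} to differences to get a Cauchy sequence of gradients, identify the limit with $\nabla u$ by testing against compactly supported vector fields, and pass to the limit. Your explicit use of $\varphi_j - \varphi_k$ to derive the Cauchy property makes precise a step the paper treats as "clear," and your closing remark on the spectral-calculus shortcut is a nice aside, but the substance matches the paper's argument.
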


\subsubsection{Sobolev spaces and density problems}\label{section-density}
In view of forthcoming discussions, and of the fact that Sobolev space theory is the red wire connecting the various parts of the present survey, we reformulate condition \eqref{essself} and its subsequent implication in a slightly different way. To this end, we need to introduce some notation.  Given $1<p<+\infty$, and $u \in L^{p}(M)$ denote by $\nabla u$, $\Delta u$ and $\Hess(u)$, respectively, the distributional gradient, Laplacian and Hessian of $u$. Then, define the Sobolev space
\[
 W^{1,p}(M) = \{ u \in L^{p}(M): |\nabla u| \in L^{p}(M)\}
\]
with norm
\[
\| u \|_{W^{1,p}} = \| u \|_{L^{p}} + \| \nabla u \|_{L^{p}};
\]
the Sobolev space
\[
 W^{2,p}(M)= \{ u \in L^{p}(M) : |\nabla u|,\, |\Hess(u)| \in L^{p}(M)\}
\]
with norm
\[
 \| u \|_{ W^{2,p} } = \| \Hess( u ) \|_{L^{p}} + \| \nabla u \|_{L^{p}} + \| u \|_{L^{p}}
\]
and, finally, the Sobolev space
\[
\tW^{2,p}(M) = \{ u \in L^{p}(M): \Delta u \in L^{p}(M)\}
\]
with norm
\[
\| u \|_{\widetilde W^{2,p} } = \| \Delta u \|_{L^{p}} + \| u \|_{L^{p}}.
\]
All of them are reflexive Banach spaces. Moreover, if we limit ourselves to Sobolev functions that ``vanish at infinity'' (in the sense we are going to define) we give rise to new Banach spaces inside $W^{1,p}$, $\widetilde W^{2,p}$ and $W^{2,p}$. They are defined, respectively, as the closures
\[
W^{1,p}_{0}(M) = \overline{C^{\infty}_{c}(M)}^{W^{1,p}} \subseteq W^{1,p}(M)
\]
and
\[
\tW^{2,p}_{0}(M) = \overline{C^{\infty}_{c}(M)}^{\tW^{2,p}} \subseteq \tW^{2,p}(M),\quad W^{2,p}_{0}(M) = \overline{C^{\infty}_{c}(M)}^{W^{2,p}}\subseteq W^{2,p}(M).
\]
The {\it density problem in Sobolev space theory} consists in understanding geometric conditions under which these inclusions are, in fact, equalities. The importance of this basic problem relies on the fact that it is very often much more  convenient to work with smooth compactly supported functions and, therefore, one has to be sure that these are enough to approximate every Sobolev function. In this respect, it is important to stress that, on a complete manifold, there is no loss of information in assuming that Sobolev functions are in fact smooth thanks to the following (special case of the) Meyers-Serrin type result; see \cite{GGP}.
\begin{theorem}\label{th-MS}
 Let $(M,g)$ be a complete Riemannian manifold. Then
 \[
 \overline{W^{k,p}(M)\cap C^{\infty}(M)}^{W^{k,p}} = W^{k,p}(M)
 \]
 for every $1 \leq p<+\infty$ and $k=1,2$ (actually for every $k \in \nn$ up to defining the higher order Sobolev spaces).
\end{theorem}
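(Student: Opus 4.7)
The plan is to mimic the classical Meyers--Serrin ``$H=W$'' argument: localize via coordinate charts and a partition of unity, mollify piece by piece in Euclidean charts, and sum the smooth pieces back together. Observe that we only need to approximate $u\in W^{k,p}(M)$ by smooth $W^{k,p}$ functions (no compact support is required), so global growth of the approximants at infinity is of no concern; completeness enters merely through the comfortable existence of a nice locally finite exhaustion of $M$, which is in fact available on any smooth paracompact manifold.

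First I would fix $u \in W^{k,p}(M)$ together with a countable, locally finite cover $\{U_j\}_{j\in\nn}$ of $M$ by relatively compact open sets, each $U_j$ contained in the domain of a coordinate chart $\vp_j: U_j \to \vp_j(U_j) \subset \rr^{m}$, and a smooth partition of unity $\{\psi_j\}$ subordinate to $\{U_j\}$ with $\sum_j \psi_j \equiv 1$. For a fixed $\eta > 0$ and each $j$, the function $\psi_j u$ has compact support in $U_j$, so pulling it back via $\vp_j$ gives a compactly supported $W^{k,p}$ function on an open subset of $\rr^{m}$. Standard Friedrichs mollification with parameter $\e_j = \e_j(\eta) > 0$ small enough produces a function $v_j \in C^{\infty}_c(U_j)$ with
\[
\| v_j - \psi_j u \|_{W^{k,p}(U_j)} < \frac{\eta}{2^{j}}.
\]
The candidate approximant is $v := \sum_j v_j$, which is well defined and smooth on $M$ by local finiteness, and satisfies $\| v - u \|_{W^{k,p}(M)} \leq \sum_j \| v_j - \psi_j u \|_{W^{k,p}(U_j)} < \eta$. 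Hence $v \in W^{k,p}(M) \cap C^\infty(M)$ approximates $u$ to within $\eta$, yielding density.

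The only genuinely Riemannian point is the calibration between chart partial derivatives and the intrinsic gradient / Hessian. For $k=1$ this reduces to $|\nabla u|_g^{2} = g^{ij}\partial_i u\,\partial_j u$ with bounded smooth $g^{ij}$ on each relatively compact $\overline{U_j}$, so Euclidean and Riemannian $W^{1,p}$-norms on $U_j$ are equivalent. For $k=2$ one has $(\Hess u)_{ij} = \partial^2_{ij} u - \Gamma^{l}_{ij}\partial_l u$, and since the Christoffel symbols are uniformly bounded on $\overline{U_j}$, Euclidean $W^{2,p}$ convergence of compactly supported mollifications on $\vp_j(U_j)$ implies Riemannian $W^{2,p}$ convergence on $U_j$ (the correction involving $\nabla u$ is of lower order and converges for free from the $k=1$ step). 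Multiplying by $\psi_j$ costs nothing since $\|\psi_j\|_{C^{2}}$ is finite on the precompact $\overline{U_j}$. Thus the main technical point, and the only real bookkeeping in the argument, is to choose the radii $\e_j$ small enough simultaneously so that each $v_j$ stays supported inside $U_j$ and the errors are summable; this is achieved by a standard shrinking procedure and is the only step that requires care.
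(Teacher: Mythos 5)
Your proof is correct and is essentially the same Meyers--Serrin chart/partition-of-unity/local-mollification argument that underlies the cited reference \cite{GGP}, which the paper defers to without reproducing the details. Your aside that completeness is actually irrelevant---only paracompactness is used, to guarantee a locally finite atlas and subordinate partition of unity---is also accurate and is reflected in the very title of \cite{GGP} (``\ldots on arbitrary manifolds'').
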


In this framework, what Strichartz observed can be formulated by stating that, thanks to the existence of $1^{st}$ order cut-off function \eqref{cutoff1}, the following density result holds.
\begin{proposition}[Strichartz]\label{prop-Strichartz}
 Let $(M,g)$ be a complete Riemannian manifold. Then
 \[
 W^{1,2}(M) = W^{1,2}_{0}(M)
 \]
 and
 \[
\tW^{2,2}(M) = \tW_{0}^{2,2}(M).
 \]
\end{proposition}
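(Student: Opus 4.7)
The identity $\tW^{2,2}(M) = \tW^{2,2}_{0}(M)$ is essentially a repackaging of the essential self-adjointness relation \eqref{essself} that was already verified in the preceding discussion, whereas $W^{1,2}(M) = W^{1,2}_{0}(M)$ needs a short separate argument. Both rely on the sequence $\{\vr_{k}\} \subset C^{\infty}_{c}(M)$ of first order cut-offs from \eqref{cutoff1}, but nothing more.

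For the $W^{1,2}$ density, I would first invoke Theorem \ref{th-MS} to reduce to approximating a fixed $u \in W^{1,2}(M) \cap C^{\infty}(M)$, and then take $\vp_{k} := \vr_{k} u \in C^{\infty}_{c}(M)$ as approximants. Since $0 \leq \vr_{k} \leq 1$ and $\vr_{k} \nearrow 1$, dominated convergence yields $\vp_{k} \to u$ in $L^{2}(M)$. Writing
\[
\nabla \vp_{k} = \vr_{k}\, \nabla u + u\, \nabla \vr_{k},
\]
the first summand tends to $\nabla u$ in $L^{2}$ again by dominated convergence, while
\[
\| u\, \nabla \vr_{k} \|_{L^{2}} \leq \| \nabla \vr_{k} \|_{L^{\infty}} \, \| u \|_{L^{2}} \longrightarrow 0
\]
by property \eqref{cutoff1}(c). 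Hence $\vp_{k} \to u$ in $W^{1,2}(M)$.

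For the $\tW^{2,2}$ density the analogous truncation is doomed, since the product rule
\[
\Delta(\vr_{k} u) = \vr_{k}\, \Delta u + 2\, g(\nabla \vr_{k}, \nabla u) + u\, \Delta \vr_{k}
\]
would require supplementary control on $\| \Delta \vr_{k} \|_{L^{\infty}}$, i.e.\ a supply of second order cut-offs that is not available in pure geodesic completeness. The plan is therefore to route the argument through the essential self-adjointness of $-\Delta$, as done by Strichartz. By direct inspection of the definitions one has $\D(-\Delta_{max}) = \tW^{2,2}(M)$ and $\D(-\Delta_{min}) = \tW^{2,2}_{0}(M)$, and the common graph norm $\| u \|_{L^{2}} + \| \Delta u \|_{L^{2}}$ coincides with $\| u \|_{\tW^{2,2}}$. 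Under these identifications the claimed equality $\tW^{2,2}(M) = \tW^{2,2}_{0}(M)$ is precisely \eqref{essself}, and the latter was verified above through the Caccioppoli test for $\Delta u = \l u$ ($\l > 0$) against $\vr_{k}^{2} u$, where, once more, only $\|\nabla \vr_{k}\|_{L^{\infty}} \to 0$ is used to force the right-hand side to vanish in the limit.

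The main obstacle is conceptual rather than technical: one must resist the reflex of truncating directly in $\tW^{2,2}$ and instead recognise that the stated density is self-adjointness in disguise. This is precisely the bridge that converts the qualitative vanishing $\| \nabla \vr_{k} \|_{L^{\infty}} \to 0$ into a second order Sobolev approximation result without ever invoking Hessian-type cut-offs or curvature assumptions.
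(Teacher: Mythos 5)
Your proof is correct and follows precisely the route the paper intends but leaves implicit. The second identity is essential self-adjointness of $-\Delta$ in disguise: your identifications $\D(-\Delta_{\max}) = \tW^{2,2}(M)$ and $\D(-\Delta_{\min}) = \tW^{2,2}_{0}(M)$ are exactly right, so the claim reduces to \eqref{essself}, and the first identity is the standard first-order truncation argument the paper dismisses as ``almost trivial.'' Your observation that direct truncation fails in $\tW^{2,2}$ because of the term $u\,\Delta\vr_{k}$ is the correct reason the argument must be rerouted through self-adjointness, and it nicely foreshadows why the paper later turns to Laplacian cut-offs for $W^{2,2}_{0}(M)=W^{2,2}(M)$.
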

Later on in these notes we will touch the problem of extending the second density conclusion to integrability exponents $p \not=2$ (the extension of the first one is almost trivial). In fact we shall see that, quite surprisingly, geodesic completeness is enough to include all the $L^{p}$ scale, $1<p<+\infty$; see Theorem \ref{th-Milatovich}.
\subsubsection{$L^{2}$-Hessian estimates: Laplacian cut-off functions} The extension to the genuine $L^{2}$ setting of \eqref{L2Hessian} was first observed by L. Bandara in \cite{Ba-PAMS}. The discussion presented here incorporates contributions and viewpoints from \cite{Gu-JGEA, Gu-book} by B. G\"uneysu and from the very recent \cite{IRV-IMRN} by  D. Impera, M. Rimoldi and G.Veronelli.\smallskip

Everything here boils down to the next nontrivial result of independent interest.
\begin{theorem}\label{th-L2Sobolev}
 Let $(M,g)$ be a complete Riemannian manifold satisfying $\ric \geq -K^{2}$ for some $K \geq 0$. Then, the following chain of equalities holds:
 \begin{equation}
 \widetilde W^{2,2}(M) = \tW^{2,2}_{0}(M) = W^{2,2}_{0}(M) = W^{2,2}(M).
 \end{equation}
\end{theorem}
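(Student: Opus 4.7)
The plan is to prove the three equalities separately, using only Proposition \ref{prop-Strichartz}, Proposition \ref{prop-L2CZ-cs}, and the trivial pointwise bound $|\Delta u| \leq \sqrt{m}\,|\Hess(u)|$. The first equality, $\widetilde W^{2,2}(M) = \tW^{2,2}_{0}(M)$, is the second assertion of Strichartz's Proposition \ref{prop-Strichartz}: its proof uses only geodesic completeness, through the existence of the $1^{\mathrm{st}}$-order cut-off functions \eqref{cutoff1} and the ensuing essential self-adjointness of $-\Delta$ on $C^\infty_c(M) \subseteq L^2(M)$.

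For $\tW^{2,2}_{0}(M) = W^{2,2}_{0}(M)$ I would establish that the two norms $\|\cdot\|_{\tW^{2,2}}$ and $\|\cdot\|_{W^{2,2}}$ are \emph{equivalent} on $C^\infty_c(M)$. One direction, $\|\vp\|_{\tW^{2,2}} \leq (1+\sqrt{m})\,\|\vp\|_{W^{2,2}}$, is immediate from Cauchy--Schwarz applied to $\Delta \vp = \tr_g \Hess(\vp)$. The reverse bound $\|\vp\|_{W^{2,2}} \leq C\,\|\vp\|_{\tW^{2,2}}$ is exactly the compactly supported Calder\'on-Zygmund estimate of Proposition \ref{prop-L2CZ-cs}, and it is the unique place in the argument where the Ricci lower bound $\ric \geq -K^{2}$ enters, via the Bochner formula. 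A sequence in $C^\infty_c(M)$ is then $\tW^{2,2}$-Cauchy if and only if it is $W^{2,2}$-Cauchy; since both spaces embed continuously into $L^2(M)$, the two limits agree as $L^2$-functions, so the closures of $C^\infty_c(M)$ inside $\tW^{2,2}(M)$ and $W^{2,2}(M)$ determine the same subset of $L^2(M)$.

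The remaining equality follows by chaining. The trivial inclusions $W^{2,2}_{0}(M) \subseteq W^{2,2}(M) \subseteq \widetilde W^{2,2}(M)$, where the second one again uses $|\Delta u| \leq \sqrt{m}\,|\Hess(u)|$, combine with the two equalities already proved to yield
\[
 W^{2,2}_{0}(M) \subseteq W^{2,2}(M) \subseteq \widetilde W^{2,2}(M) = \tW^{2,2}_{0}(M) = W^{2,2}_{0}(M),
\]
so every inclusion collapses to an equality. The entire non-trivial content of the theorem is therefore concentrated in Proposition \ref{prop-L2CZ-cs} on $C^\infty_c(M)$ together with the Strichartz density statement, and the main obstacle in the argument is really the extension of the Calder\'on-Zygmund estimate to all smooth compactly supported functions by means of a global integration by parts, which is justified by completeness. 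An alternative, more hands-on route -- in line with the subsection heading -- would bypass Strichartz and instead multiply a smooth representative $u \in C^\infty(M) \cap W^{2,2}(M)$ (available by the Meyers--Serrin Theorem \ref{th-MS}) by \emph{Laplacian cut-off functions} $\rho_k \in C^\infty_c(M)$ satisfying $0 \leq \rho_k \leq 1$, $\rho_k \nearrow 1$, $\|\nabla \rho_k\|_{L^\infty} \to 0$ and $\|\Delta \rho_k\|_{L^\infty} \to 0$, showing first that $\rho_k u \to u$ in $\tW^{2,2}(M)$ and then upgrading to $W^{2,2}(M)$ via \eqref{L2CZ-cs}; in that variant the hard part shifts to the construction of such $\rho_k$, which requires the Laplacian comparison theorem applied to a regularised distance function and hence, once again, the Ricci lower bound.
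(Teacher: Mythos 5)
Your main argument is correct, and it reaches the conclusion by a route that is genuinely different from the paper's proof of this theorem. After disposing of $\widetilde W^{2,2}(M)=\tW^{2,2}_0(M)$ by Strichartz and $\tW^{2,2}_0(M)=W^{2,2}_0(M)$ by norm equivalence (Cauchy--Schwarz in one direction, \eqref{L2Hessian} in the other) -- which is exactly the paper's step (a) -- you close the chain by observing that $W^{2,2}_0 \subseteq W^{2,2} \subseteq \widetilde W^{2,2}$, the last inclusion again by $|\Delta u| \le \sqrt m\,|\Hess(u)|$, so that all inclusions collapse. This is precisely the argument the paper uses later for Proposition \ref{prop-density} (with Milatovic in place of Strichartz when $p\ne 2$), and the remark following Theorem \ref{th-L2Sobolev} explicitly flags it as the ``second natural way'' to prove $W^{2,2}_0=W^{2,2}$. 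The paper's actual proof of step (b), by contrast, multiplies a smooth $W^{2,2}$-function by Schoen--Yau's (weak) Laplacian cut-offs and then works the Bochner formula to trade the uncontrolled $\Hess(\vp_j)$ for the controlled $\Delta\vp_j$. The trade-off, as the remark explains, is that the cut-off route is more flexible: it establishes the second-order density $W^{2,2}_0=W^{2,2}$ on its own, under geometric hypotheses where no Calder\'on--Zygmund inequality may be available, whereas your chaining argument is inseparable from the $L^2$-Hessian estimate.

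One small correction to the alternative route you sketch at the end. You ask for cut-offs with $\|\Delta\rho_k\|_{L^\infty}\to 0$, but the paper's Schoen--Yau lemma under $\ric\ge -K^2$ only delivers the weak condition $\|\Delta\rho_k\|_{L^\infty}\le C$ (the footnote says explicitly that the decay is \emph{not} required), and that is all the argument needs: since $\rho_k\nearrow 1$, the product $u\,\Delta\rho_k$ tends to $0$ pointwise and is dominated by $C|u|\in L^2$, so $\|u\,\Delta\rho_k\|_{L^2}\to 0$ by dominated convergence. With that repair, your variant -- first get $\rho_k u\to u$ in $\tW^{2,2}$, then apply \eqref{L2CZ-cs} to $(\rho_k-\rho_\ell)u\in C^\infty_c(M)$ to upgrade to a $W^{2,2}$-Cauchy sequence -- is perfectly valid, and in fact sidesteps the Bochner/integration-by-parts step in the paper's version of (b). It is, however, again conditional on the $C^\infty_c$ Calder\'on--Zygmund estimate and so does not recover the extra generality of the paper's cut-off proof.
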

Before coming into the proof of Theorem \ref{th-L2Sobolev}, let us see how it implies immediately the $L^{2}$-Hessian estimate.
\begin{corollary}\label{cor-L2Hessian}
  Let $(M,g)$ be a complete, $m$-dimensional Riemannian manifold satisfying $\ric \geq -K^{2}$ for some $K \geq 0$. Then, for every solution $u \in C^{\infty}(M)\cap L^{2}(M)$ of the Poisson equation $\Delta u = f$ with $f \in L^{2}(M)$, the Hessian estimate \eqref{L2Hessian} holds true.
\end{corollary}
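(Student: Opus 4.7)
The plan is a density argument: approximate $u$ by smooth compactly supported functions, apply the $C^{\infty}_c$ estimate derived earlier to each approximant, and pass to the limit. The decisive input is Theorem \ref{th-L2Sobolev}. Since $u$ and $\Delta u = f$ both lie in $L^2(M)$, we have $u \in \tW^{2,2}(M)$, and Theorem \ref{th-L2Sobolev} identifies this space with $W^{2,2}_0(M)$. Hence there exists a sequence $\{\vp_j\} \subseteq C^{\infty}_c(M)$ with $\vp_j \to u$ in the $W^{2,2}$ norm.

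From this $W^{2,2}$ convergence I extract three $L^2$ convergences needed below: $\vp_j \to u$, $\Hess(\vp_j) \to \Hess(u)$, and, using $\Delta = \tr_g \Hess$, the pointwise bound
\[
|\Delta \vp_j - f| = |\tr_g \Hess(\vp_j - u)| \leq \sqrt{m}\, |\Hess(\vp_j - u)|,
\]
which gives $\Delta \vp_j \to f$ in $L^2$. For each $\vp_j \in C^{\infty}_c(M)$, the Bochner-identity computation that produced \eqref{L2Hessian} in the preliminary discussion is rigorous (integration by parts is legal thanks to the compact support, and $\ric \geq -K^2$ is in force) and delivers, for every $\e > 0$,
\[
\|\Hess(\vp_j)\|_{L^2}^2 \leq \frac{K^2 \e^2}{2}\|\vp_j\|_{L^2}^2 + \left(1 + \frac{K^2}{2\e^2}\right)\|\Delta \vp_j\|_{L^2}^2.
\]

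Letting $j \to \infty$ and invoking the three convergences above yields exactly \eqref{L2Hessian} for $u$. The only genuine obstacle in this argument is Theorem \ref{th-L2Sobolev} itself, whose proof requires the construction of second-order (Laplacian) cut-off functions on manifolds with $\ric \geq -K^2$; granted that result, the corollary is a soft density-plus-continuity consequence, with no extra geometric analysis needed. I also note in passing that the smoothness assumption on $u$ is essentially cosmetic: any $u \in L^2(M)$ with distributional $\Delta u \in L^2(M)$ lies in $\tW^{2,2}(M) = W^{2,2}(M)$, so the same estimate follows for its distributional Hessian.
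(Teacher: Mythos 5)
Your argument is correct and follows essentially the same route as the paper: obtain $u \in \tW^{2,2}(M) = W^{2,2}_0(M)$ from Theorem \ref{th-L2Sobolev}, approximate in the $W^{2,2}$ norm by $\vp_j \in C^\infty_c(M)$, apply the compactly supported estimate \eqref{L2Hessian} to each $\vp_j$, and pass to the limit. You are in fact slightly more careful than the paper's terse proof, which passes to the limit without remarking that $\Delta\vp_j \to f$ in $L^2$ needs justification; your Cauchy--Schwarz observation $|\Delta(\vp_j-u)|\le\sqrt{m}\,|\Hess(\vp_j-u)|$ supplies exactly that missing detail.
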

\begin{proof}
 Indeed, since $u \in \widetilde{W}^{2,p}(M)$, then $u \in W^{2,p}(M)$ and by density we find a sequence $\{ \vp_{j}\} \subset C^{\infty}_{c}(M)$ such that
\[
i)\, u = \lim_{L^{2}} \vp_{j},\quad ii)\, \nabla u = \lim_{L^{2}} \nabla \vp_{j},\quad iii) \Hess(u) = \lim_{L^{2}} \Hess(\vp_{j}).
\]
Applying \eqref{L2Hessian} to each $\vp_{j}$ we get
\[
\|  \Hess (\vp_{j}) \|_{L^{2}} \leq C_{1} \| \vp_{j}\|_{L^{2}} + C_{2} \|  \Delta \vp_{j} \|_{L^{2}}
\]
and the conclusion follows by taking the limit as $j \to \infty$.
\end{proof}

\begin{proof}[Outline of the proof of Theorem \ref{th-L2Sobolev}]
According to Proposition \ref{prop-Strichartz} we shall focus on the equalities
\begin{itemize}
 \item [(a)] $\tW^{2,2}_{0}(M) = W^{2,2}_{0}(M)$
 \item [(b)] $W^{2,2}_{0}(M) = W^{2,2}(M)$.
\end{itemize}
 Equality (a) is the easier of the two. Indeed, by the Cauchy-Schwarz inequality, for every $\vp \in C_{c}^{\infty}(M)$,
\[
| \Delta \vp | \leq \sqrt{m} \, |\Hess(\vp)|
\]
proving that
\[
\| \vp \|_{\tW^{2,2}} \leq C_{1} \| \vp \|_{W^{2,2}}
\]
for some $C_{1}=C_{1}(m)>0$ and, therefore,
\[
W^{2,2}_{0}(M) \subseteq \tW^{2,2}_{0}(M).
\]
On the other hand, by applying the $L^{2}$-Hessian estimate \eqref{L2Hessian} for compactly supported functions we see that
\[
\| \vp \|_{W^{2,2}} \leq C_{2} \| \vp \|_{\tW^{2,2}}
\]
for some constant $C_{2}=C_{2}(K)>0$ and, therefore, we also have the opposite inclusion
\[
\tW^{2,2}_{0}(M) \subseteq W^{2,2}_{0}(M).
\]
This finishes the proof of (a).

The proof of (b) relies on the following important lemma by R. Schoen and S.T. Yau, \cite{SY-redbook}.
\begin{lemma}[Laplacian cut-off functions]
 Let $(M,g)$ be a complete Riemannian manifold of dimension $m$ and Ricci curvature $\ric \geq -K^{2}$. Let also $o \in M$ be a fixed reference point. Then, there exists a sequence of cut-off functions $\vp_{j} \in C^{\infty}_{c}(M)$ satisfying
 \begin{equation}\label{cutoff2}
 a)\, 0 \leq \vp_{j} \leq 1,\quad b)\,  \vp_{j} \nearrow 1, \quad c)\, \|\nabla \vp_{j}\|_{L^{\infty}} \to 0,\quad d)\, \| \Delta \vp_{j} \|_{L^{\infty}} \leq C,
\end{equation}
for some constant $C=C(m,K, \operatorname{Geom}(B_{1}(o)))>0$ and for $j \to +\infty$.
\end{lemma}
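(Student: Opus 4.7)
The plan is to construct $\vp_j$ as a smooth one-variable profile composed with a smooth approximation $\tilde r$ of the distance function $r(x) := d(x, o)$, exploiting the Laplacian comparison theorem under the lower Ricci bound. Recall that this theorem gives, in the barrier/distributional sense on $M \setminus \{o\}$,
\[
\Delta r \leq f_K(r),
\]
with $f_K$ bounded by a constant $C_1 = C_1(m, K)$ once $r \geq 1$. Completeness and Hopf--Rinow ensure that $\overline{B_R(o)}$ is compact for every $R>0$, so any $\eta \circ r$ with $\eta$ of compact support is automatically compactly supported on $M$.

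The first step is to replace $r$ by a smooth proper exhaustion $\tilde r \in C^\infty(M)$ satisfying $|\tilde r - r| \leq 1/4$, $|\nabla \tilde r| \leq 2$, and a two-sided pointwise bound $|\Delta \tilde r| \leq L = L(m,K)$ outside $B_1(o)$, with all derivatives inside $B_1(o)$ controlled by $\operatorname{Geom}(B_1(o))$. Outside $B_1(o)$, such a $\tilde r$ is produced by a Greene--Wu convex mollification of $r$, or equivalently by a short-time heat-semigroup regularization with Li--Yau type control. The upper bound on $\Delta \tilde r$ is inherited directly from $\Delta r \leq C_1$; the two-sidedness --- the delicate point --- comes from the smoothing itself, since convolving against a kernel of scale $\ll 1$ in geodesic normal charts, whose volume distortion is controlled by Bishop--Gromov, also produces a lower pointwise Laplacian bound. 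Inside $B_1(o)$, $\tilde r$ is defined by a smooth extension whose derivatives depend on the local geometry, which is the origin of the dependence on $\operatorname{Geom}(B_1(o))$ in the constant of the lemma.

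The second step is to fix once and for all a nonincreasing profile $\eta \in C^\infty([0,\infty); [0,1])$ with $\eta \equiv 1$ on $[0,1]$ and $\eta \equiv 0$ on $[2,\infty)$, and set $\eta_j(t) := \eta(t/j)$, so that $\|\eta_j'\|_{L^\infty} \leq C/j$ and $\|\eta_j''\|_{L^\infty} \leq C/j^2$. Define $\vp_j := \eta_j \circ \tilde r \in C^\infty_c(M)$. Monotonicity of $\eta$ in its argument and of $t \mapsto t/j$ in $j$ yield $0 \leq \vp_j \leq 1$ and $\vp_j \nearrow 1$ pointwise, which are a) and b). The chain rule
\[
\nabla \vp_j = \eta_j'(\tilde r)\,\nabla \tilde r, \qquad \Delta \vp_j = \eta_j''(\tilde r)\,|\nabla \tilde r|^2 + \eta_j'(\tilde r)\,\Delta \tilde r,
\]
combined with the bounds on $\tilde r$ and on $\eta_j', \eta_j''$, gives $\|\nabla \vp_j\|_{L^\infty} \leq 2C/j \to 0$ (c)) and the uniform estimate $\|\Delta \vp_j\|_{L^\infty} \leq 4C/j^2 + (C/j)\cdot L \leq C'(m, K, \operatorname{Geom}(B_1(o)))$ (d)).

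The main obstacle is the smoothing step. The distance function is only Lipschitz on $\{o\} \cup \mathrm{Cut}(o)$, so a naive $\eta_j \circ r$ is only a Lipschitz cut-off whose Laplacian makes sense merely as a distribution; moreover, the Laplacian comparison theorem provides only a one-sided upper bound on $\Delta r$, whereas property d) demands a two-sided pointwise bound on $\Delta \vp_j$. Both issues are resolved by the Greene--Wu (or heat-semigroup) mollification, whose careful execution --- uniform control of the mollification scale, preservation of the one-sided comparison bound, and the upgrade to a two-sided pointwise bound via Bishop--Gromov volume comparison --- constitutes the technical heart of the argument.
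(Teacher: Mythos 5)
Your overall architecture is right and matches the structure of the Schoen--Yau argument the paper cites: construct a smooth proper ``distance-like'' function $\tilde r$ with $|\tilde r - r|$, $|\nabla \tilde r|$ and $|\Delta \tilde r|$ uniformly controlled, then compose with a rescaled profile $\eta_j$. The computation of $\nabla \vp_j$ and $\Delta \vp_j$ from there is correct, and you correctly identify that the crux is the \emph{two-sided} pointwise bound on $\Delta \tilde r$, the upper bound being the easy direction (preserved from the Laplacian comparison $\Delta r \leq C_1$ because $-r$ is, roughly, a barrier-subharmonic function), and the lower bound being the delicate one.

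The gap is in how you propose to obtain that lower bound. You assert that a Greene--Wu mollification, or a convolution ``in geodesic normal charts, whose volume distortion is controlled by Bishop--Gromov,'' produces a lower pointwise Laplacian bound. This does not work under the stated hypothesis $\ric \geq -K^2$. Mollification techniques in the tangent space (Greene--Wu, or the Cheeger--Gromov construction that the paper explicitly mentions two paragraphs later) require a two-sided \emph{sectional} curvature bound: one needs to control the metric in exponential normal coordinates and the derivatives of $\exp_x$, and a Ricci lower bound gives none of that. Bishop--Gromov controls volume ratios from above, not derivatives of convolved Lipschitz functions, and in particular does not bound from below the singular negative part of the distributional $\Delta r$ concentrated on the cut locus --- which is exactly what threatens the upper bound on $\eta_j'(\tilde r)\Delta \tilde r$ and hence on $\Delta \vp_j$. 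Under $\ric \geq -K^2$ alone, the constructions that actually produce $\tilde r$ are of a different nature: the original Schoen--Yau argument in \cite{SY-redbook}, Tam's heat-semigroup argument \cite{Ta-ALM} (which regularizes $r$ via $e^{t\Delta}$ and exploits heat-kernel bounds under Ricci lower bounds rather than pointwise convolution), and the Bianchi--Setti refinement \cite{BS-CalcVar}. The paper's own narrative in the ``Final considerations'' subsection draws precisely this distinction: Cheeger--Gromov mollification for bounded sectional curvature, PDE/heat-kernel methods for the Ricci-only setting. Your proposal names both the ``easy'' route (mollification) and the ``hard'' route (heat semigroup), but the justification you give lives entirely in the former, which is not available here; the heat-semigroup version is mentioned but not executed, and its proof is not a one-liner.

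In short: correct skeleton, correct identification of the delicate point, but the key step --- producing $\tilde r$ with $|\Delta \tilde r|$ bounded below under $\ric \geq -K^2$ alone --- is not actually proved, and the tools you invoke for it require strictly stronger (sectional) curvature hypotheses than the lemma assumes.
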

In the language of \cite{Gu-JGEA, IRV-IMRN} these $2^{nd}$ order cut-off functions are called {\it (weak\footnote{meaning that in d) we {\bf do not} require that $\| \Delta \vp_{j} \|_{L^{\infty}} \to 0$ as $j \to+\infty$.}) Laplacian cut-off functions}.

Now, given $u \in C^{\infty}(M) \cap W^{2,2}(M)$, one is led to define $u_{j} = \vp_{j}u \in C^{\infty}_{c}(M)$ and try to show that $\| u_{j} - u\|_{W^{2,2}} \to 0$ as $j \to +\infty$. Up to the first order, there is no problem because

\[
\int_{M}(u-u_{j})^{2} = \int_{M}(1-\vp_{j})^{2}u^{2} \to 0
\]
and 
\[
\int_{M}|\nabla u - \nabla u_{j}|^{2}\leq 2 \int_{M}|\nabla u|^{2}(1-\vp_{j})^{2} + 2 \int_{M}u^{2}|\nabla \vp_{j}|^{2} \to 0
\]
as $j \to +\infty$. If we try the same at the second order we realize that we have only a control on $\Delta \vp_{j}$ not on on the full $\Hess(\vp_{j})$. However we can still invoke the help of the Bochner formula
 \[
 \frac{1}{2}\Delta |\nabla \vp_{j}|^{2} = |\Hess(\vp_{j})|^{2}+  g(\nabla \Delta \vp_{j} ,\nabla \vp_{j}) + \ric(\nabla \vp_{j},\nabla \vp_{j})
 \]
 and use suitable integration by parts to replace $|\Hess(\vp_{j})|$ with $|\Delta \vp_{j}|$ in the estimate of $\int_{M} | \Hess(u) - \Hess(u_{j})|^{2}$. Joint with the properties of $\vp_{j}$ this yields that
\[
\int_{M} | \Hess(u) - \Hess(u_{j})|^{2} \to 0,
\] 
as $j \to +\infty$.
\end{proof}
\begin{remark}
The proof of (b) we have proposed is based on the existence of special cut-offs. There is (at least) a second natural way to obtain the equality $W^{2,2}_{0}(M) = W^{2,2}(M)$ in the above assumptions. It uses only the $L^{2}$-Hessian estimate on compactly supported functions. See Proposition \ref{prop-density} below. The reason why we have emphasized the cut-off function approach is that it is much more powerful and flexible in terms of geometric conditions, \cite{IRV-IMRN}. Summarizing: the validity of Calder\'on-Zygmund is a (strictly) stronger condition than the density property in Sobolev spaces. In this sense the relation between Calder\'on-Zygmund and density must be used in the other direction, i.e., in terms of counterexamples. Section \ref{CZ-counterxamples} should clarify this sentence.
\end{remark}

\subsubsection{Final considerations on the $L^{2}$-setting}
Let us emphasize some of the main ingredients and interplays emerging from the previous discussions. Let $(M,g)$ be a noncompact Riemannian manifold.\medskip

\noindent (A) Integration by parts shows that, for any $u \in C^{\infty}_{c}(M)$, $\| \nabla u \|_{L^{2}} \leq \| u \Delta u \|_{L^{1}}$. If $M$ is geodesically complete, this inequality extends to $u \in C^{\infty}(M)$ satisfying $u,\Delta u \in L^{2}(M)$. This is related to the essential self-adjointeness  of $\Delta$ and can be formulated in terms of a density property of compactly supported smooth functions in $\tW^{2,2}(M)$.\medskip

\noindent (B) Assuming that $\ric \geq -K^{2}$, integrating by parts the Bochner formula gives that, for every $u \in C^{\infty}_{c}(M)$, $\| \Hess (u) \|_{L^{2}} \leq C_{1} \| u \|_{L^{2}} + C_{2} \| \Delta u \|_{L^{2}}$. If further $M$ is geodesically complete, this inequality extends to $u \in C^{\infty}(M)$ with $u,\Delta u \in L^{2}(M)$. This extension relies on the equality of Sobolev spaces established in Theorem \ref{th-L2Sobolev}. In particular, it needs the equivalence of the Sobolev norms of $\tW^{2,2}$ and $W^{2,2}(M)$, and the density of $C^{\infty}_{c}(M)$ in the Sobolev space $W^{2,2}(M)$. In fact, this latter property holds under a quadratic Ricci decay $\ric \geq - C(1+r(x)^{2})$, see \cite{IRV-IMRN}.
\medskip

\noindent (C) Density properties in Sobolev spaces $W^{k,p}$ follow from the possibility of producing sequences of cut-off functions with controlled derivatives up the order $k$. As we shall see later, a natural way to construct these cut-off functions is to use {\it distance like} functions which are smooth and with controlled derivatives up to the order $k$. This is what was done in the seminal work of Schoen-Yau quoted before. Subsequently, the Schoen-Yau construction was extended to quadratic Ricci decays by D. Bianchi and A.G. Setti, \cite{BS-CalcVar} but in their work only a control on the Laplacian is introduced. The genuine control on the higher derivatives of distance like functions in unbounded (but controlled) geometries is a  contribution of Impera-Rimoldi-Veronelli, \cite{IRV-IMRN, IRV-preprint}. We refer the reader to these papers, actually based on \cite{BS-CalcVar}, for the up-to-date conditions joint with further very nice applications.\medskip

Density questions in Sobolev spaces are naturally related to curvature. This sentence is strongly true in the sense that, on the one side, controlling the curvature implies density as discussed above and, on the other side, a very recent and striking counterexample due to Veronelli, \cite{Ve-Counterexample},  shows that density needs a curvature restriction.

\begin{problem}
 Is it similar for the $L^{2}$ Calder\'on-Zygmund inequality at the $C^{\infty}_{c}(M)$ level? Namely, we have seen that a Ricci lower bound implies their validity. What happens if the curvature is unbounded?
\end{problem}

 \begin{example}\label{example-L2}
The first example showing that, in general, the $L^{2}$ Hessian estimate for compactly supported functions may fail if the curvature is unbounded, was discovered in \cite{GP-AdvMath}. Briefly, the example is constructed as follows.\smallskip

Let $\rr^{2}$ be endowed with the complete Riemannian metric $g$ that, in polar coordinates $(r,\theta)$, writes as
\[
g= dr\otimes dr + \s^{2}(r)d\theta \otimes d\theta
\]
where $\s: \rr \to \rr$ is a smooth function satisfying the following requirements
\[
i)\,\, \s(t)>0\,\, \forall t>0,\quad ii)\,\, \s^{(2k)}(0) = 0 \,\, \forall k \in \nn,\quad iii) \,\, \s'(0) =1.
\]
The corresponding Riemannian manifold $(\rr^{2},g)$, usually called {\it a model manifold with warping function $\s$}, will be denoted by $\mm^{2}_{\s}$.

We put ourselves in the setting where $\mm^{2}_{\s}$ has infinite volume and its Laplace-Beltrami operator does not possess a positive Green kernel. Said in equivalent terms, $\mm^{2}_{\s}$ is an infinite-volume parabolic manifold. It is well known that this is equivalent to require that, respectively, $\int^{+\infty}\s(r) \,dr =+\infty$ and $\int^{+\infty} \frac{dr}{\s(r)} =+\infty$. More precisely, we assume that
\[
r \leq \s(r) \leq r+1
\]
for all $r \geq 1$. We are going to use the (rotationally symmetric) signed Green kernel $G : \mm^{2}_{\s}\setminus\{0\} \to \rr$ of the Laplacian, given by
\[
G(r) = \int_{1}^{r} \frac{dt}{ \sigma(t)},
\]
as a new coordinate that replaces the distance function $r$ in the polar representation of $\rr^{2}$. This looks pretty much in spirit of the {\it fake distance} constructed on $m \geq 3$-dimensional non-parabolic manifolds by T. Colding in \cite{Co-Acta} although we were not aware of this work at that time. See also the recent \cite{MRS}.

Now, having fixed $\phi \in C^{\infty}_{c}([0,1])$ and the corresponding sequence of translated functions $\phi_{k} (t)  = \phi(t-k) \in C^{\infty}_{c}( [k,k+1])$, we define the radial cut-off functions $u_{k} \in C^{\infty}_{c}(\mm^{2}_{\s})$ by
\[
u_{k}(r) = \phi_{k}\circ G(r).
\]
Then, one can verify that the following estimates hold:\smallskip
\begin{itemize}
 	\item $\|  u_{k}\|  _{2}^{2} \approx \int_{k}^{k+1}\left(\phi_{k}\left(  s\right)  \right)  ^{2}\left(  \sigma \circ  G^{-1}\left(s\right)    \right)  ^{2} ds \leq  C\, e^{2k}$\smallskip
	\item $\|  \Delta u_{k}\|  _{2}^{2} \approx {2}\int_{k}^{k+1}
\frac{\left(  \phi_{k}^{\prime\prime}\left(  s\right)  \right)  ^{2}}{\left(\sigma\circ   G^{-1}\left(  s\right)   \right)  ^{2}}ds \leq C \, e^{-2k}$\smallskip
	\item $\|  \mathrm{Hess}\left(  u_{k}\right)  \|  _{2}^{2} 
\geq \int_{k}^{k+1}\left(  \phi_{k}^{\prime}\left(  s\right)
\right)  ^{2}\left(  \dfrac{\sigma^{\prime}}{\sigma}\circ   G^{-1}\left(s\right)  \right)  ^{2} ds$.
\end{itemize}
Note that the derivatives of $\s$, in fact its first derivative, appears only in the lower estimate of the Hessian. Thus, in order to violate the $L^{2}$ Calder\'on-Zygmund inequality at the $C^{\infty}_{c}$-level we can choose $r \leq \s(r) \leq r+1$ carefully in such a way that: (a) it oscillates more and more and, (b) its first derivative $\s'(r)$ explodes sufficiently fast in the interval $[k,k+1]$ as $k \to +\infty$. The first condition is just to ensure that an ``almost vertical'' piece of $\s$ can be found in any interval $[k,k+1]$.
\end{example}

The $L^{2}$ picture is relatively well understood. Now, the basic questions that will permeate the rest of the paper.

\begin{problem}
What happens on the $L^{p}$ scale, if $1<p<+\infty$? What are the geometric conditions on $(M,g)$ ensuring that, given a smooth solution of the Poisson equation $\Delta u = f$ on $M$ with $u, f  \in L^{p}(M)$ it holds that $\| u \|_{W^{2,p}} \leq C_{1} \| u \|_{L^{p}} + C_{2} \| \Delta u \|_{L^{p}}$? How  the constants depend on the geometry? 
\end{problem}

\begin{remark}\label{rem-cz-endpoint}
 In these basic problems the endpoint cases $p=1$ and $p=+\infty$ will be not considered because the corresponding Calder\'on-Zygmund theory is false even in the Euclidean space. More precisely, concerning the Hessian estimate, we have the following counterexamples:
\begin{itemize}
 \item ({$\mathbf{p=\infty}$)} This case is intimately related with the failure of the Schauder estimates outside the H\"older setting. Indeed, the example in  \cite[Problem 4.9 (a)]{GT}, shows that there exists a function $f \in C^{0}_{c}(\rr^{n})$ such that the Poisson equation $\Delta u = f$ has a solution $u_{0} \in C^{1}(\rr^{n})\cap C^{\infty}(\rr^{n}\setminus \{0\})$ with $| \Hess(u_{0}) |(x) \to +\infty$ as $x \to 0$. In particular, $u_{0} \not \in C^{2}(\rr^{n})$ (or, better, $u \not\in C^{1,1}(\rr^{n})$). Now, if $u_{1}$ is any other locally bounded solution of the same equation, then $u_{1}-u_{0}$ is harmonic and bounded near the origin. Therefore, by elliptic regularity, it must be smooth in a neighborhood of $0$. Since $u_{1} = u_{0} + (u_{1}-u_{0})$ we conclude that also $| \Hess(u_{1})(x)| \to +\infty$ as $x \to 0$, showing that the equation $\Delta u = f$ has no $C^{2}$ (actually $C^{1,1})$ solution at all. Now, as a consequence of a very general and abstract result of K. De Leeuw and H. Mirkil, \cite[Proposition 2]{DM-CRAS}, the validity of an $L^{\infty}$-estimate like $\| \Hess(\vp) \|_{L^{\infty}} \leq C \| \Delta \vp \|_{L^{\infty}}$ for all $\vp \in C^{\infty}_{c}(\rr^{n})$ and for some universal constant $C>0$ is completely equivalent to the fact that for any function $f \in C^{0}(\rr^{n})$ satisfying $f(x) \to 0$ as $|x| \to +\infty$, it holds that $\Delta u = f$ has a solution with $\partial^{2}_{ij}u \in C^{0}(\rr^{n})$. Thus, the above example shows that no such $L^{\infty}$-estimate can be satisfied.

 \item ($\mathbf{p=1}$) In the paper \cite[Part 1]{Or-ARMA} D. Ornstein constructs a (somewhat) explicit  sequence $\vp_{k}\in C^{\infty}_{c}({\mathbf{C}}^{2}_{1}(0))$, where ${\mathbf{C}}^{2}_{1}(0)$  is the unit cube of $\rr^{2}$, for which
 \[
 \int_{\rr^{2}} | \partial^{2}_{xy} \vp_{k} | \geq k \int_{\rr^{2}} |\partial^{2}_{xx}\vp_{k}| + | \partial^{2}_{yy}\vp_{k}|.
 \] 
 In particular, the inequality $\| \Hess(\vp) \|_{L^{1}} \leq C \| \Delta \vp \|_{L^{1}}$ cannot be true on $C^{\infty}_{c}(\rr^{2})$ with a uniform constant $C>0$. In fact, in the same paper, it is proved a much more general and abstract result on the failure of the $L^{1}$ estimate, which is inspired by the $L^{\infty}$-case studied in \cite{DM-CRAS}.
\end{itemize}
\end{remark}

We are going to give an overview of what is known so far in the direction of the above problems. We shall start discussing some illuminating (counter)examples and then we will follow the scheme we have adopted in the Hilbertian case, namely, we shall investigate separately the validity of the Hessian and the gradient estimates first studying the $C^{\infty}_{c}$ case and, then, moving to the $L^{p}$ level.\smallskip

Following \cite{Gu-JGEA, GP-AdvMath}, in the sequel we shall make use of the following terminology.

\begin{definition}\label{def-CZ}
 Let $(M,g)$ be an $m$-dimensional Riemannian manifold. We say that an $L^{p}$-Calder\'on-Zygmund inequality holds on $M$ if there exists a constant $C>0$ such that
 \begin{equation}\label{CZ}\tag{CZ}
 \| \Hess(\vp) \|_{L^{p}} \leq C \left\{ \| \vp \|_{L^{p}} + \| \Delta \vp \|_{L^{p}} \right\}
\end{equation}
for every $\vp \in C^{\infty}_{c}(M)$.
\end{definition}

\section{$L^{p}$-Hessian estimates: counterexamples}\label{CZ-counterxamples}

We have seen in Example \ref{example-L2} that, in general, the validity of an $L^{2}$ Calder\'on-Zygmund inequality, \ref{CZ}(2), may fail, even at the $C^{\infty}_{c}$ level, on a $2$-dimensional, parabolic, model manifold. Since the volume of this manifold is infinite, nothing can be deduced concerning the validity or the failure of {CZ}(p) when $p \not = 2$. Moreover, the estimates we gave are simplified by the assumption that we are in a $2$-dimensional space. However, in the paper \cite{Li-AGAG}, S. Li is able to strengthen the construction so to include every dimension $m \geq 2$ and every $1<p<+\infty$, thus completing the picture.
 \begin{theorem} \label{th-Li}
 For any $m \geq 2$ and every $1<p<+\infty$ there exists a complete, parabolic, model manifold $\mm^{m}_{\sigma}:= (\rr^{m},dr \otimes dr +\sigma^{2}(r) g_{\ss^{m-1}})$ with unbounded (radial) sectional curvature  such that, along a sequence $\vp_{k} \in C^{\infty}_{c}(\mm^{m}_{\sigma})$, \ref{CZ}(p) is violated, i.e.,
 \[
\lim_{k \to +\infty} \frac{ \| \Hess(\vp_{k}) \|_{L^{p}}}{\| \vp_{k} \|_{L^{p}} + \| \Delta \vp_{k} \|_{L^{p}}} = +\infty.
 \]
\end{theorem}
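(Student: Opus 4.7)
The plan is to mimic Example \ref{example-L2} in arbitrary dimension $m\ge2$ and arbitrary exponent $1<p<+\infty$, building a parabolic model manifold $\mm^{m}_{\sigma}$ whose warping function $\sigma$ is polynomially controlled but with a violently oscillating first derivative, and then testing \eqref{CZ} on radial cut-offs whose Hessian is forced to blow up by its angular components. First I would fix $\sigma \in C^{\infty}([0,+\infty))$ satisfying the usual smoothness/parity conditions at $0$ (so that $\mm^{m}_{\sigma}$ is a genuine smooth Riemannian manifold), with power-type growth $\sigma(r) \asymp r^{\alpha}$ for some $\alpha=\alpha(m)$ chosen so that parabolicity $\int^{+\infty}\sigma^{-(m-1)}\,dr=+\infty$ and infinite volume $\int^{+\infty}\sigma^{m-1}\,dr=+\infty$ both hold. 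The oscillatory perturbation will be added afterwards on small subintervals.

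Following the strategy of Example \ref{example-L2}, I would then introduce the signed Green coordinate $G(r) = \int_{1}^{r}\sigma^{-(m-1)}(t)\,dt$, which by parabolicity is an unbounded strictly increasing diffeomorphism of $[1,+\infty)$ onto $[0,+\infty)$, and test \eqref{CZ} on $u_{k}=\phi_{k}\circ G \in C^{\infty}_{c}(\mm^{m}_{\sigma})$, where $\phi_{k}(s)=\phi(s-k)$ for a fixed non-trivial $\phi \in C^{\infty}_{c}([0,1])$. A direct computation in model coordinates gives the clean identity
\[
\Delta u_{k} = \frac{\phi_{k}''\circ G}{\sigma^{2(m-1)}},
\]
while the Hessian of a radial function on a warped product reads
\[
|\Hess(u_{k})|^{2} = (u_{k}'')^{2} + (m-1)\left(\frac{\sigma'}{\sigma}\right)^{2}(u_{k}')^{2},
\]
which, combined with $u_{k}' = (\phi_{k}'\circ G)/\sigma^{m-1}$, yields the pointwise lower bound $|\Hess(u_{k})|^{2} \geq (m-1)\,(\sigma'/\sigma)^{2}(\phi_{k}'\circ G)^{2}\sigma^{-2(m-1)}$. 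Integrating against $\sigma^{m-1}(r)\,dr\,d\omega$ and changing variable to $s = G(r)$, the three relevant quantities reduce to integrals on $[k,k+1]$ of the form
\[
\|u_{k}\|_{L^{p}}^{p} \asymp \int_{k}^{k+1} |\phi|^{p}\, \sigma^{2(m-1)}\!\circ G^{-1}\, ds, \qquad \|\Delta u_{k}\|_{L^{p}}^{p} \asymp \int_{k}^{k+1} |\phi''|^{p}\, \sigma^{-2(m-1)(p-1)}\!\circ G^{-1}\, ds,
\]
\[
\|\Hess(u_{k})\|_{L^{p}}^{p} \gtrsim \int_{k}^{k+1} |\phi'|^{p}\left(\frac{\sigma'}{\sigma}\right)^{\!p}\!\circ G^{-1} \cdot \sigma^{(2-p)(m-1)}\!\circ G^{-1}\, ds.
\]
Crucially, $\sigma'$ appears only in the lower bound for the Hessian and is entirely absent from the right-hand side of \eqref{CZ}.

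The hard part, and the content of Li's refinement, is the simultaneous calibration of $\sigma$ so that for every fixed $m$ and $p$ one can make the ratio $\|\Hess(u_{k})\|_{L^{p}}/(\|u_{k}\|_{L^{p}}+\|\Delta u_{k}\|_{L^{p}})$ diverge. The idea is to glue onto the polynomial background a sequence of smooth ``near-vertical'' bumps of vanishing width but diverging slope, one family per annulus $G^{-1}([k,k+1])$, tuned so that: (i) $\int_{k}^{k+1}(\sigma'/\sigma)^{p}\circ G^{-1}\,ds \to +\infty$; (ii) $\sigma$ itself stays comparable to its power profile, so the quantities $\int_{k}^{k+1}\sigma^{2(m-1)}\!\circ G^{-1}\,ds$ and $\int_{k}^{k+1}\sigma^{-2(m-1)(p-1)}\!\circ G^{-1}\,ds$ remain bounded uniformly in $k$; (iii) positivity of $\sigma$ and the parity conditions at the origin are preserved, so $\mm^{m}_{\sigma}$ is a bona fide smooth metric. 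The principal technical obstacle is precisely (i)--(ii): one needs a controlled amount of oscillation in the first derivative without letting the function itself drift. This is achieved by choosing the widths of the bumps to decrease faster than their heights increase, in a dimension- and $p$-dependent way. Once such a $\sigma$ is constructed, dividing the Hessian lower bound by the easy estimates on $\|u_{k}\|_{L^{p}}$ and $\|\Delta u_{k}\|_{L^{p}}$ yields the failure of \eqref{CZ}; that the resulting manifold has unbounded radial sectional curvature $-\sigma''/\sigma$ is an automatic byproduct of the rapid oscillations of $\sigma'$.
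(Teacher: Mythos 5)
The paper does not prove Theorem \ref{th-Li}; it states it as a result of S.~Li \cite{Li-AGAG} and characterizes that work as extending Example \ref{example-L2} to all dimensions and exponents, so I compare your proposal against the blueprint Example \ref{example-L2} provides and against internal consistency.

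Your computations are sound: the cancellation of the first-order drift giving $\Delta u_k = (\phi_k''\circ G)\,\sigma^{-2(m-1)}$ after composing with the Green coordinate, the warped-product Hessian identity $|\Hess(u)|^2 = (u'')^2 + (m-1)(\sigma'/\sigma)^2(u')^2$ for radial $u$, and the three $L^p$-integrals after the change of variable $s=G(r)$ all check out; so does the crucial observation that $\sigma'$ enters only the Hessian lower bound and is absent from the right-hand side of \ref{CZ}(p).

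The gap is in your requirement (ii). Asking that $\int_k^{k+1}\sigma^{2(m-1)}\circ G^{-1}\,ds$ remain bounded uniformly in $k$ is incompatible with a genuine power profile $\sigma(r)\asymp r^{\alpha}$, $\alpha>0$: since $G^{-1}(s)\to\infty$, that integral necessarily grows with $k$. Even Example \ref{example-L2} (where $m=2$ and $\sigma\asymp r$, i.e.\ $\alpha=1$) has this quantity growing like $e^{2k}$, and it never claims boundedness -- it only needs the Hessian integral to grow \emph{faster}. You should either (a) replace (ii) by the weaker, correct requirement that the Hessian integral dominates the sum of the other two, choosing bump heights $h_k$ and widths $w_k$ (in a $p$- and $m$-dependent way) so that the $(\sigma'/\sigma)^p$-contribution outpaces whatever polynomial or exponential growth the right-hand side integrals have; or (b) take $\sigma$ bounded and bounded away from zero outside a neighborhood of the origin (i.e.\ $\alpha=0$ rather than a genuine power), which is still parabolic with infinite volume, gives $G(r)\sim r$, makes all right-hand side integrals $O(1)$, and reduces the calibration to forcing $\int_k^{k+1}(\sigma'/\sigma)^p\circ G^{-1}\,ds\to\infty$ with bumps narrow enough that $\sigma$ stays near its constant background. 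As written, (ii) is asserted without justification and is actually false for the power-type $\sigma$ you propose, so the construction sketch needs to be repaired along one of these two lines before the divergence of the Calder\'on--Zygmund ratio is established.
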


Next we point out that, for large values of $p$, the constant in \ref{CZ}(p) cannot depend only on a lower sectional curvature bound. This is obtained by G. De Philippis and J. Zimbron in \cite{DZ-preprint} as a consequence of their harmonic function theory on ${\rm RCD}(n,K)$ spaces and answers in the negative a question rised in \cite{Gu-JGEA}.

\begin{theorem}\label{th-DZ}
Let $m< p <+\infty$. There exists a sequence of compact(!) Riemannian manifolds $(M^{m}_{k},g_{k})$ with $\sect_{M_{k}} \geq 0$ and $\diam ( M_{k} ) \leq D$ such that, for some $\vp_{k} \in C^{\infty}(M_{k})$,
\begin{equation}\label{DZ1}
\| \Delta_{g_{k}} \vp_{k} \|_{L^{p}} + \| \nabla_{g_{k}} \vp_{k} \|_{L^{p}} \leq 1
\end{equation}
and
\begin{equation}\label{DZ2}
\lim_{k\to +\infty} \| \Hess_{g_{k}}(\vp_{k}) \|_{L^{p}}  = +\infty.
\end{equation}
\end{theorem}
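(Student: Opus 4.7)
Fix $p > m$. The strategy is to build compact singular limit spaces admitting harmonic functions whose Hessians just fail to lie in $L^{p}$, and then use Morrey embedding on smooth nonnegatively curved approximants to transfer the obstruction. Concretely, the plan is to exhibit a sequence $(M_{k},g_{k})$ of compact smooth Riemannian manifolds with $\sect \geq 0$ and $\diam(M_{k}) \leq D$ converging in measured Gromov--Hausdorff (mGH) sense to a compact $\mathrm{RCD}(0,m)$ space $(X,d_{X},\mathcal{H}^{m})$ with a conical singularity at some $o\in X$, and then to transport a singular harmonic function on $X$ back to the smooth level.

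The local model on $X$ near $o$ is a metric cone $C(Y)$ over a smooth $(m-1)$-manifold $Y$ with $\ric_{Y} \geq (m-2)g_{Y}$ and first nonzero Neumann eigenvalue $\mu_{1}$ only slightly larger than $m-1$; separation of variables furnishes the harmonic function
\[
h(r,y) = r^{\beta}\phi_{1}(y), \qquad \beta = -\tfrac{m-2}{2} + \sqrt{\bigl(\tfrac{m-2}{2}\bigr)^{2} + \mu_{1}},
\]
on $C(Y)\setminus\{o\}$, with $\phi_{1}$ a first eigenfunction. A direct computation shows that $h \in W^{1,p}_{\mathrm{loc}}$ for every $\beta > 1 - m/p$, while $|\Hess\,h|(r,y) \asymp r^{\beta-2}|\phi_{1}(y)|$ fails to be $L^{p}$ on any neighborhood of $o$ as soon as $\beta \leq 2 - m/p$. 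The interval $1 < \beta < 2 - m/p$ is nonempty precisely because $p > m$; a concrete realization is $Y = \ss^{m-1}_{\rho}$ for a suitable $\rho<1$, which gives a cone $C(Y)$ in $\mathrm{RCD}(0,m)$. Multiplying by a smooth cutoff $\chi$ supported near $o$ yields $\tilde h = \chi h$ on $X$, with the same second-order obstruction and of bounded support; this is exactly the type of object that the $\mathrm{RCD}$ harmonic-function theory of De Philippis--Zimbron is designed to produce intrinsically.

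Lift $\tilde h$ along the mGH convergence: by heat-flow regularization on $X$ followed by pullback via $\varepsilon$-isometries (in the Gigli--Mondino--Savar\'e framework for Sobolev functions on varying spaces), obtain $\varphi_{k} \in C^{\infty}(M_{k})$ with $\varphi_{k} \to \tilde h$ in $W^{1,p}$ and $\Delta_{g_{k}}\varphi_{k} \to \Delta_{X}\tilde h$ in $L^{p}$; rescaling then secures \eqref{DZ1}. Assume for contradiction that $\|\Hess_{g_{k}}(\varphi_{k})\|_{L^{p}}$ also stays bounded. Since $\sect_{M_{k}}\geq 0$ and $\diam(M_{k})\leq D$, Bishop--Gromov plus the Neumann--Poincar\'e inequality yield a uniform Morrey--Sobolev embedding $W^{2,p}(M_{k}) \hookrightarrow C^{1,1-m/p}(M_{k})$ with constants depending only on $m$, $p$ and $D$. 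The $\varphi_{k}$ are therefore equi-bounded in $C^{1,1-m/p}$, and along the mGH convergence their uniform limit $\tilde h$ inherits a $C^{1,1-m/p}$ modulus on $X$, contradicting the choice of $\beta$. Hence \eqref{DZ2} must hold.

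The main obstacle lies in making the two limit passages rigorous. First, realizing the singular cone $X$ as an actual mGH limit of smooth $\sect\geq 0$ manifolds with uniformly bounded diameter is delicate, because generic orbifold singularities cannot be smoothed while preserving $\sect\geq 0$; one must choose the cross-section $Y$ and the approximants (via, e.g., smoothing of a surface of revolution in the two-dimensional case, or more elaborate positively curved desingularizations in higher dimensions) so that the limit geometry and eigenvalue data meet the prescribed range of $\beta$. Second, upgrading the uniform $W^{2,p}$ bound along $(M_{k},g_{k}) \to X$ to a genuine H\"older modulus for the limiting function $\tilde h$ requires the convergence and regularity theory for Sobolev/harmonic functions on $\mathrm{RCD}$ spaces. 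These are the technical heart of the De Philippis--Zimbron argument and are precisely the ingredients supplied by their harmonic-function theory.
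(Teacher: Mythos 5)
Your route is genuinely different from the paper's, and the gap you flag but do not close is real. The paper does not work with a single conical singularity and an explicit $r^{\beta}\phi_{1}$ harmonic function. Instead it takes the Otsu--Shioya convex hypersurface (\cite{OS-JDG}, Example (2)), whose defining feature is a \emph{dense} set of sharp singular points and which is known to be an mGH limit of smooth compact $\sect\geq 0$ manifolds by \cite{AKP-Illinois}; the nonconstant $u_{\infty}$ is an \emph{abstract} $W^{1,2}\cap L^{p}$ Poisson solution, manufactured in the appendix by solving $\Delta_{g_{k}}u_{k}=g_{k}$ on $M_{k}$ for explicit zero-mean Lipschitz bumps $g_{k}$ and passing to the limit via Honda's convergence theory; and the contradiction runs through Theorem 1.1 of \cite{DZ-preprint} (the gradient of such a solution vanishes at every sharp singular point, so by density of the singular set and the uniform Morrey modulus of $|\nabla u_{\infty}|$ the solution must be constant). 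You bypass the De Philippis--Zimbron vanishing theorem altogether by writing down a separation-of-variables harmonic function and hitting a $C^{1,1-m/p}$-modulus failure at the apex. That is conceptually cleaner and more self-contained at the level of the limit function, but it asks considerably more of the limit space itself.

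The main gap is exactly the one you acknowledge but do not resolve: you must realize your cone $X=C(\mathbb{S}^{m-1}_{\rho})$, suitably compactified, as an mGH limit of smooth compact $m$-manifolds with $\sect\geq 0$ and $\diam\leq D$. For $m=2$ this is a surface of revolution and elementary; for $m\geq 3$ it is a genuine desingularization problem. Not every compact Alexandrov space with $\mathrm{Curv}\geq 0$ is such a limit, and you have not shown that your cone arises as the boundary of a convex body in $\mathbb{R}^{m+1}$ or is otherwise in the relevant closure. The paper avoids this issue entirely by citing \cite{AKP-Illinois} specifically for the Otsu--Shioya hypersurface. A secondary gap is the lift $\tilde{h}\mapsto\varphi_{k}$: ``heat-flow regularization plus pullback by $\varepsilon$-isometries'' does not by itself produce $\varphi_{k}\in C^{\infty}(M_{k})$ with $\Delta_{g_{k}}\varphi_{k}\to\Delta_{X}\tilde{h}$ strongly in $L^{p}$ and uniform $W^{1,p}$ bounds; the paper's appendix constructs the $\varphi_{k}$ directly as Poisson solutions on $M_{k}$ and invokes Honda's strong $W^{1,2}$ and $L^{p}$ convergence theorems together with the $L^{\infty}$-gradient estimates of \cite{ZZ-AdvMath}. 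Both of your missing steps are plausible, but as written the argument is not self-contained, whereas the paper's differently chosen limit space lets all the needed ingredients be cited directly.
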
 

\begin{remark}
 Note that, by volume comparison, \cite{PRS-book}, also the volumes of $M_{k}$ are uniformly upper bounded.
\end{remark}
\begin{remark}
In the above statement nothing is said about the  $L^{p}$ norm of $\vp_{k}$. However, up to translating each $\vp_{k}$ by a suitable constant $c_{k} \in \rr$ (that does not affect neither the assumptions nor the conclusion), we can assume that $\| \vp_{k}\|_{L^{p}} \leq C$ for some uniform constant $C=C(K,D,m,p)>0$. Indeed, recall from \cite[Theorem 5.6.6]{SC-book} that, on a compact $m$-dimensional Riemannian manifold $(M,g)$ with $\ric \geq -K^{2}$, $K \ge 0$, the following $L^{p}$ Neumann-Poincar\'e inequality holds
\[
\| u - \avint_{M} u \|_{L^{p}}^{p} \leq C_{1}^{p} \, \diam(M)^{p} e^{C_{2}K\diam(M)} \| \nabla u \|_{L^{p}}^{p}
\]
for every $u \in C^{\infty}(M)$ and for some constants $C_{1} = C_{1}(m,p),\, C_{2}=C_{2}(m)>0$. Thus, in the assumptions of Theorem \ref{th-DZ}, letting $c_{k} = \avint_{M} \vp_{k} $ we have that
\[
\| \vp_{k} - c_{k} \|_{L^{p}} \leq C:=C_{1} D,
\]
as claimed.
\end{remark}

\begin{remark}
Combining the above observation with the interpolation inequalities that we shall discuss in Section \ref{section-interpolation} below, we deduce the following fact of independent interest. Given any Riemannian manifold $(M,g)$ we agree to denote\footnote{the symbol $\ZM$ is the acronym of ``zero mean''. The upper and lower indices denote the regularity of the functions involved and the fact that they are compactly supported.}
\[
\ZM_{c}^{\infty}(M) = \{ \vp \in C_{c}^{\infty}(M) : \int_{M} \vp =0 \}
\]
where, obviously, the compact support condition is for free (and removed from the notation) when $M$ is compact.
\begin{proposition}[Neumann-CZ(p)]\label{prop-Neumann}
 Let $(M,g)$ be a compact, $m$-dimensional Riemannian manifold of diameter $D$ and Ricci curvature $\ric \geq - K^{2}$, $K \geq 0$. Let also $1<p<+\infty$ be fixed. Then, in the class $\ZM^{\infty}(M)$, the validity of (\ref{CZ})(p)  is equivalent to the next variant of the Calder\'on-Zygmund inequality
\begin{equation}\label{CZ'}\tag{CZ'}
 \| \Hess(\vp) \|_{L^{p}} \leq C' \{ \|\Delta \vp \|_{L^{p}} + \|\nabla \vp \|_{L^{p}} \},
\end{equation}
in the following precise sense:
\begin{itemize}
 \item [-] If (\ref{CZ})(p) holds (at least on $\ZM^{\infty}(M)\subset$) $C^{\infty}(M)$ with a constant $C>0$ then (\ref{CZ'})(p) is satisfied on $\ZM^{\infty}(M)$ with a constant $C'=C'(C,m,p,K,D)>0$.
 \item [-] If (\ref{CZ'})(p) holds on $\ZM^{\infty}(M)$ with a constant $C'>0$ then (\ref{CZ})(p) holds with a constant $C=C(C',p)>0$ in the class $C^{\infty}(M)$ (and in particular on $\ZM^{\infty}(M)$).
\end{itemize}
\end{proposition}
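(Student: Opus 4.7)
The plan is to combine the $L^{p}$ Neumann-Poincar\'e inequality recalled in the preceding remark with an $L^{p}$-interpolation inequality of the form
\[
\|\nabla \vp\|_{L^{p}} \leq \eta\, \|\Delta\vp\|_{L^{p}} + B_{\eta}\, \|\vp\|_{L^{p}} \qquad (\eta>0),
\]
with $B_{\eta}$ depending only on $\eta$ and $p$; this is the $L^{p}$-analog of the $L^{2}$ statement in Proposition~\ref{prop-L2gradient} and should be part of the material of Section~\ref{section-interpolation}.

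For the forward implication, assume that (\ref{CZ})(p) holds on $\ZM^{\infty}(M)$ with constant $C$, and fix $\vp \in \ZM^{\infty}(M)$. Since $\avint_{M}\vp = 0$, the Neumann-Poincar\'e inequality gives
\[
\|\vp\|_{L^{p}} \leq C_{1} D\, e^{C_{2}KD/p}\, \|\nabla\vp\|_{L^{p}}.
\]
Plugging this bound into the right-hand side of (\ref{CZ})(p) immediately yields (\ref{CZ'})(p) on $\ZM^{\infty}(M)$ with $C' = C\bigl(1 + C_{1}D\,e^{C_{2}KD/p}\bigr)$, of the required form $C'=C'(C,m,p,K,D)$.

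For the reverse implication, take any $\vp \in C^{\infty}(M)$ and set $\tilde\vp = \vp - \avint_{M}\vp \in \ZM^{\infty}(M)$. Since the Hessian, gradient, and Laplacian all annihilate constants, applying (\ref{CZ'})(p) to $\tilde\vp$ returns
\[
\|\Hess(\vp)\|_{L^{p}} \leq C'\bigl\{\|\Delta\vp\|_{L^{p}} + \|\nabla\vp\|_{L^{p}}\bigr\}.
\]
Substituting the interpolation inequality with, say, $\eta = 1$ gives
\[
\|\Hess(\vp)\|_{L^{p}} \leq 2C'\,\|\Delta\vp\|_{L^{p}} + C'\,B_{1}\,\|\vp\|_{L^{p}},
\]
which is (\ref{CZ})(p) on all of $C^{\infty}(M)$ with $C = C'\max\{2,\, B_{1}\}$, depending only on $C'$ and $p$.

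The real content of the argument sits in the $L^{p}$-interpolation used in the reverse direction; the rest is pure bookkeeping (a Poincar\'e substitution forward, and a mean-subtraction followed by interpolation backward). The main obstacle is therefore to secure the interpolation with a constant $B_{\eta}$ that can be chosen independent of the ambient geometry, so that the final constant $C$ in (\ref{CZ})(p) depends genuinely only on $C'$ and $p$, as claimed.
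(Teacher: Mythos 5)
Your forward direction is correct and matches the intended argument: on $\ZM^{\infty}(M)$ the Neumann--Poincar\'e inequality replaces $\|\vp\|_{L^{p}}$ by $\|\nabla\vp\|_{L^{p}}$ in (\ref{CZ})(p), and the constant is $C'=C'(C,m,p,K,D)$. The reverse direction, however, has a genuine gap which you yourself flag at the end and do not close. The interpolation you invoke,
\[
\|\nabla\vp\|_{L^{p}}\leq \eta\,\|\Delta\vp\|_{L^{p}}+B_{\eta}\,\|\vp\|_{L^{p}},\qquad B_{\eta}=B_{\eta}(p),
\]
is available from Lemma~\ref{lemma-interpolation} only for $1<p\leq 2$ (this is \eqref{interpolation2}, due to Coulhon--Duong). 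For $p\geq 2$ the paper's Lemma~\ref{lemma-interpolation}\,(A), i.e.\ \eqref{interpolation1}, controls $\|\nabla\vp\|_{L^{p}}$ by $\|\vp\|_{L^{p}}$ and $\|\Hess(\vp)\|_{L^{p}}$, not by the Laplacian; the Laplacian version for $p>2$ requires geometric hypotheses (e.g.\ a Ricci lower bound, as in Theorem~\ref{th-CTT}), which would make the resulting constant depend on $K$, contradicting the claimed form $C=C(C',p)$.

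The fix is an absorption argument, not a geometric interpolation. For $2\leq p<+\infty$, after mean-subtraction apply (\ref{CZ'})(p) to $\tilde\vp$ and then \eqref{interpolation1} with small $\varepsilon$:
\[
\|\Hess(\vp)\|_{L^{p}} \leq C'\,\|\Delta\vp\|_{L^{p}} + C'\left(\frac{C_{p}}{\varepsilon}\,\|\vp\|_{L^{p}} + C_{p}\varepsilon\,\|\Hess(\vp)\|_{L^{p}}\right),
\]
where $C_{p}=C_{p}(p)>0$. Choosing $\varepsilon>0$ so that $C'C_{p}\varepsilon\leq\frac{1}{2}$ and absorbing the last term on the left gives (\ref{CZ})(p) on $C^{\infty}(M)$ with a constant depending only on $C'$ and $p$. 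For $1<p\leq 2$, your direct substitution via \eqref{interpolation2} is valid since compact manifolds are complete, and the constant there also depends only on $p$. With this case split the proof is complete and yields exactly the dependence stated.
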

\end{remark}

With this in mind, the  proof of Theorem \ref{th-DZ} goes as follows.
 \begin{proof}[Proof of Theorem \ref{th-DZ} (sketch)]
According to Proposition \ref{prop-Neumann} it is enough to prove that the constant in (\ref{CZ'})(p), $p >m$, cannot depend only on $m,p$ and on a lower bound of the sectional curvature.

The starting point is the existence of a sequence of compact, $m$-dimensional Riemannian manifolds $(M_{k},g_{k})$ with $\diam M_{k} \leq D$ and $\sect_{M_{k}} \geq 0$ that converges in the measured Gromov-Hausdorff topology to a non-collapsed Alexandrov space $(X^{m},d,\H^{m})$ satisfying the following conditions
\begin{itemize}
 \item [(a)] $\mathrm{Curv}(X) \geq 0$ in the sense of Alexandrov.
 \item [(b)] $X$ has a dense set of {\it sharp singularities}, in the sense that the tangent cones at each point of this dense set are singular cones.
\end{itemize}
The limit space is the boundary of a convex set in $\rr^{m+1}$ and is constructed in \cite[Example (2)]{OS-JDG}. The existence of the approximating sequence is observed e.g. in \cite[Theorem 1]{AKP-Illinois}. 

The proof now proceeds by contradiction. Assume that \eqref{CZ'}(p), $p>m$, holds with a constant depending only on $m,p$ and on the lower bound of the sectional curvature. Then, using the Morrey-Sobolev embedding theorem we deduce the existence of a uniform constant $E=E(m,p,D)>0$ such that, for every $k$ and for every $u\in C^{\infty}(M_{k})$,
\begin{equation}\label{DZ3}
| |\nabla u |(p) - |\nabla u |(q) | \leq E (\|\Delta u\|_{L^{p}} + \| \nabla u \|_{L^{p}}) d_{k}(p,q)^{\frac{p-m}{p}}\, \text{ on } \, M_{k}.
\end{equation} 
Now, according to Appendix \ref{section-singularPoisson}, we consider a nonconstant $W^{1,2}\cap L^{p}$-solution  $u_{\infty} : X \to \rr$ of the Poisson equation $\Delta_{X} 
u_{\infty} = g_{\infty}$ with $g_{\infty} \in L^{p}(X)$, and we assume  that there exists a sequence of solutions $u_{k}:M_{k} \to \rr$ of  $\Delta_{g_{k}} u_{k} = g_{k} \in L^{p}$ on $M_{k}$ satisfying the following conditions:
\begin{itemize}
 \item $u_{k} \to u_{\infty}$ strongly in $W^{1,2}$;
 \item $\Delta u_{k} = g_{k} \to g_{\infty} = \Delta u_{\infty}$ strongly in $L^{p}$, hence, $\| \Delta u_{k} \|_{L^{p}} \leq C_{1}$;
 \item $\| u_{k} \|_{W^{1,p}} \leq C_{2}$.
\end{itemize}
Using \eqref{DZ3} along $u_{k}$ we get
\[
| |\nabla u_{k} |(p) - |\nabla u_{k} |(q) | \leq C d_{k}(p,q)^{\frac{p-m}{p}}
\]
for some constant $C>0$ independent of $k$, and then, passing to the limit, we deduce that $|\nabla u_{\infty}|$ is a continuous function; see \cite[Proposition 3.3]{Ho-Crelle}. On the other hand, as a consequence of  \cite[Theorem 1.1]{DZ-preprint}, we have that $|\nabla u_{\infty}(x)|=0$ at sharp singular points of the Alexandrov space $X$. Since this set is dense in $X$, $u_{\infty}$ must be constant, a contradiction.
\end{proof}

There is a third category of counterexamples that rely on the lack of compact supported approximation of Sobolev functions, as alluded to in Section \ref{section-density}. In fact, we have the following simple, although nontrivial, observation; \cite{Ve-Counterexample}.
\begin{proposition}\label{prop-density}
 Let $(M,g)$ be a complete, $m$-dimensional Riemannian manifold and let $1< p < +\infty$ be fixed. Then, keeping the notation of Section \ref{section-density}, the following implication holds:
 \[
(\ref{CZ})(p)\, \Longrightarrow \, W^{2,p}(M) = W^{2,p}_{0}(M) =\tW^{2,p}_{0}(M) = \tW^{2,p}(M).
 \]
 This implies that, given $u \in L^{p}(M)$ with $\Delta u \in L^{p}(M)$ it holds also that $| \nabla u| , |\Hess(u)| \in L^{p}(M)$ and, hence, there exists a sequence of cut-off functions $\vp_{k}\in C^{\infty}_{c}(M)$ such that
 \[
 \vp_{k} \overset{W^{2,p}}{\to} u.
 \]
\end{proposition}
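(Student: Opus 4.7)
The trivial inclusions $W^{2,p}_{0}(M) \subseteq W^{2,p}(M) \subseteq \tW^{2,p}(M)$ and $W^{2,p}_{0}(M) \subseteq \tW^{2,p}_{0}(M) \subseteq \tW^{2,p}(M)$ hold for free: the first two follow from the definition of the spaces and from the pointwise bound $|\Delta u| \leq \sqrt{m}\, |\Hess(u)|$, while the third is just closure. Therefore the whole chain collapses as soon as we verify the one non-trivial inclusion $\tW^{2,p}(M) \subseteq W^{2,p}_{0}(M)$. My strategy is to split this into two steps, using geodesic completeness to collapse the right half of the chain and using \eqref{CZ}(p) to collapse the left half.

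\textbf{Step 1 (the right half).} By Theorem \ref{th-Milatovich}, on any geodesically complete $(M,g)$ and for every $1<p<+\infty$ one has $\tW^{2,p}(M) = \tW^{2,p}_{0}(M)$. Thus every $u \in \tW^{2,p}(M)$ admits a sequence $\vp_{k} \in C^{\infty}_{c}(M)$ with $\vp_{k} \to u$ in $L^{p}$ and $\Delta \vp_{k} \to \Delta u$ in $L^{p}$.

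\textbf{Step 2 (the left half).} I want to upgrade this $\tW^{2,p}$-convergence to a $W^{2,p}$-convergence, which will put $u$ into $W^{2,p}_{0}(M)$. Applying \eqref{CZ}(p) to the difference $\vp_{k}-\vp_{j} \in C^{\infty}_{c}(M)$ gives
\[
\|\Hess(\vp_{k}-\vp_{j})\|_{L^{p}} \leq C\bigl( \|\vp_{k}-\vp_{j}\|_{L^{p}} + \|\Delta(\vp_{k}-\vp_{j})\|_{L^{p}} \bigr) \longrightarrow 0,
\]
so the Hessians form a Cauchy sequence in $L^{p}$. For the gradient piece of the $W^{2,p}$-norm I invoke the $L^{p}$ interpolation inequality for compactly supported functions discussed in Section \ref{section-interpolation}, namely
\[
\|\nabla \vp\|_{L^{p}} \leq \varepsilon \, \|\Hess(\vp)\|_{L^{p}} + C(\varepsilon)\, \|\vp\|_{L^{p}}, \qquad \vp \in C^{\infty}_{c}(M),
\]
applied to $\vp_{k}-\vp_{j}$: since both terms on the right tend to zero, so do the gradients. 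Hence $\{\vp_{k}\}$ is Cauchy in $W^{2,p}(M)$ and converges to some $\hat u \in W^{2,p}_{0}(M)$; uniqueness of the $L^{p}$ limit forces $\hat u = u$, proving $u \in W^{2,p}_{0}(M)$. Combined with Step 1 this yields the full chain of equalities and, in particular, produces the announced approximating sequence $\vp_{k} \to u$ in $W^{2,p}$.

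\textbf{Anticipated difficulty.} The content-heavy inputs are borrowed results: Milatovich's density theorem and the interpolation inequality. Everything else is a soft density/Cauchy argument. If one wanted a self-contained proof, the main obstacle would be producing the interpolation inequality for $\vp \in C^{\infty}_{c}(M)$ on an arbitrary complete manifold without curvature hypotheses; the standard trick of integrating $|\nabla \vp|^{p-2}g(\nabla\vp,\nabla\vp)$ by parts and applying Young's inequality works, but requires a slightly careful handling when $p$ is close to the endpoints. Since \eqref{CZ}(p) is assumed only on $C^{\infty}_{c}$ and the approximation happens entirely at the compactly supported level, no cut-off or curvature condition on $M$ is needed beyond geodesic completeness.
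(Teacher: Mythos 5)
Your proof is correct and takes essentially the same route as the paper: Theorem~\ref{th-Milatovich} gives $\tW^{2,p}(M)=\tW^{2,p}_0(M)$, and \eqref{CZ}(p) together with the interpolation inequalities of Lemma~\ref{lemma-interpolation} upgrade $\tW^{2,p}$-approximation to $W^{2,p}$-approximation, hence $\tW^{2,p}_0(M)\subseteq W^{2,p}_0(M)$. You actually make explicit a step the paper compresses, namely that the bound $\|\vp\|_{W^{2,p}}\leq C\|\vp\|_{\tW^{2,p}}$ requires the interpolation inequality (not \eqref{CZ}(p) alone) to control the gradient term. One caveat on your closing ``anticipated difficulty'' remark: for $1<p<2$ the integration-by-parts/Young trick does not directly produce the interpolation inequality, since the weight $|\nabla\vp|^{p-2}$ degenerates at critical points of $\vp$; this is exactly why the paper invokes the Coulhon--Duong multiplicative estimate \eqref{interpolation2}, which rests on heat-kernel bounds rather than pointwise integration by parts, so describing that step as ``a slightly careful handling'' undersells what is genuinely needed in that range.
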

We make a crucial use of the following surprising result by O. Milatovich, \cite[Appendix A]{GP-AMPA}, extending Strichartz's to the $L^{p}$-scale.

\begin{theorem}\label{th-Milatovich}
 Let $(M,g)$ be a complete Riemannian manifold of dimension $\dim M =m$ and let $1<p<+ \infty$ be fixed. Then,
 \[
\tW^{2,p}_{0}(M) = \tW^{2,p}(M).
\]
\end{theorem}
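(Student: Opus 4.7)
The plan is to show that $1-\Delta$ on $C_c^\infty(M) \subset L^p(M)$ is essentially $m$-dissipative, which is tantamount to the desired equality $\tW^{2,p}_0(M) = \tW^{2,p}(M)$. I would separately establish three ingredients: (i) an $L^p$-Liouville statement saying that $u \in \tW^{2,p}(M)$ with $(1 - \Delta)u = 0$ distributionally must vanish; (ii) the $L^p$-dissipativity bound $\|(1 - \Delta)\varphi\|_{L^p} \geq \|\varphi\|_{L^p}$ for $\varphi \in C_c^\infty(M)$; and (iii) the density of $(1 - \Delta)(C_c^\infty(M))$ in $L^p(M)$. The proof of the theorem will then be a one-line Lumer--Phillips-type assembly.

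The heart of the argument is (i). By elliptic regularity any such $u$ is smooth. I would then test $\Delta u = u$ against $\chi_k^2 u |u|^{p-2}$, where $\{\chi_k\} \subset C_c^\infty(M)$ is a sequence of first-order cutoffs as in \eqref{cutoff1} afforded by geodesic completeness (with $|u|^{p-2}$ replaced by $(u^2 + \delta)^{(p-2)/2}$ when $1 < p < 2$ and then $\delta \to 0$). Integrating by parts and applying Young's inequality yields the Caccioppoli-type estimate
\[
\int_M \chi_k^2 |u|^p + \frac{p-1}{2}\int_M \chi_k^2 |u|^{p-2}|\nabla u|^2 \leq C_p \int_M |u|^p |\nabla \chi_k|^2.
\]
Since $\|\nabla \chi_k\|_{L^\infty} \to 0$ and $|u|^p \in L^1(M)$, the right-hand side vanishes as $k \to \infty$, and monotone convergence forces $u \equiv 0$. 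For (ii) one pairs $\Delta \varphi$ with the normalized duality map $|\varphi|^{p-2}\varphi/\|\varphi\|_{L^p}^{p-1}$; the routine integration by parts $\langle \Delta\varphi, |\varphi|^{p-2}\varphi\rangle = -(p-1)\int |\varphi|^{p-2}|\nabla \varphi|^2 \leq 0$ gives the dissipativity bound. For (iii) I would argue by duality: any $v \in L^{p'}(M)$ annihilating the range of $(1-\Delta)|_{C_c^\infty}$ satisfies $\Delta v = v$ distributionally and therefore belongs to $\tW^{2,p'}(M)$; applying step (i) with the conjugate exponent $p' \in (1, \infty)$ yields $v = 0$.

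Given (i)--(iii) the conclusion is quick. For $u \in \tW^{2,p}(M)$, set $f := (1 - \Delta)u \in L^p(M)$ and use (iii) to pick $\varphi_n \in C_c^\infty(M)$ with $(1-\Delta)\varphi_n \to f$ in $L^p$. By (ii) the sequence $\{\varphi_n\}$ is Cauchy in $L^p$, so $\varphi_n \to v$ and $\Delta \varphi_n \to v - f$ in $L^p$; hence $v \in \tW^{2,p}_0(M)$ and $(1-\Delta)v = f$. Finally $u - v \in \tW^{2,p}(M)$ satisfies $(1-\Delta)(u-v) = 0$, and (i) forces $u = v \in \tW^{2,p}_0(M)$. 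I expect the main technical difficulty to lie in step (i) in the regime $1 < p < 2$, where the singularity of $|u|^{p-2}$ on $\{u = 0\}$ has to be neutralized by a careful $\delta$-regularization together with a dominated convergence argument to close the Caccioppoli estimate; completeness of $M$ enters the proof decisively and solely through the availability of the first-order cutoffs $\chi_k$ that drive the boundary term to zero.
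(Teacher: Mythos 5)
Your proof is correct, and it follows the same conceptual route as Milatovic's argument in Appendix~A of \cite{GP-AMPA}: reduce the density statement to showing that $1-\Delta$ is essentially $m$-dissipative on $C^{\infty}_{c}(M)\subset L^{p}(M)$, which in turn rests on (a) the easy dissipativity estimate by testing against the duality map, and (b) an $L^{p}$-Liouville property for $(1-\Delta)u=0$, obtained from a Caccioppoli-type inequality driven by first-order cut-off functions $\chi_{k}$ whose gradients tend uniformly to zero (the unique place completeness is used). Your identification of $1<p<2$ as the technically delicate regime is right, and the $(u^{2}+\delta)^{(p-2)/2}$ regularization closes that gap cleanly — note that after Fatou you only retain $\int\chi_{k}^{2}|u|^{p}\le C_{p}\|\nabla\chi_{k}\|_{L^{\infty}}^{2}\|u\|_{L^{p}}^{p}$, which is all you need. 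One stylistic alternative, common in Milatovic's other work, is to start step~(i) from the distributional Kato inequality $\Delta|u|\ge \operatorname{sgn}(u)\Delta u$ and run the cut-off argument on the nonnegative subsolution $|u|$, which sidesteps the $|u|^{p-2}$ singularity at the price of invoking Kato's inequality; your direct regularization is equivalent and arguably more self-contained.
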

 
\begin{proof}[Proof of Proposition \ref{prop-density}]
According to Theorem \ref{th-Milatovich},
\[
\tW^{2,p}_{0}(M) = \tW^{2,p}(M).
\]
On the other hand, by the Cauchy-Schwarz inequality, $|\Delta u | \leq \sqrt{m} |\Hess (u)|$ and, therefore,
\[
W^{2,p}_{0}(M) \subseteq \tW^{2,p}_{0}(M).
\]
To conclude the validity of the opposite inclusion
\[
\tW^{2,p}_{0}(M) \subseteq W^{2,p}_{0}(M)
\]
observe that, by (\ref{CZ})(p), $\| \vp \|_{W^{2,p}} \leq C \| \vp \|_{\tW^{2,p}}$ for every $\vp \in C^{\infty}_{c}(M)$. This proves that
\[
\tW^{2,p}(M) = \tW^{2,p}_{0}(M)  = W^{2,p}_{0}(M) \subseteq W^{2,p}(M).
\]
Finally, using the Meyers-Serrin type result, Theorem \ref{th-MS}, joint with Cauchy-Schwarz again, yields that
\[
W^{2,p}(M) \subseteq \tW^{2,p}(M)
\]
and the chain of equalities is completed.
\end{proof}

Accordingly, we have the following new counterexample.

\begin{corollary}
 Let $(M,g)$ be a complete Riemannian manifold where, for some $1<p<+\infty$, the inclusion of $W_{0}^{2,p}(M)$ in $W^{2,p}(M)$ is strict. We know from \cite{Ve-Counterexample} that, at least for $p \geq 2$, such a manifold exits. Its volume is finite and its curvature growths more than quadratically (like $r^{4}$). Then, on this manifold, (\ref{CZ})(p) must be violated.
\end{corollary}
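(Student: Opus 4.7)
The proof proposal is essentially a direct contrapositive application of Proposition \ref{prop-density}, so the plan is short. I would argue by contradiction: assume that (\ref{CZ})(p) does hold on $(M,g)$ for the given exponent $1<p<+\infty$. Then, since $(M,g)$ is complete, Proposition \ref{prop-density} applies verbatim and yields the chain of equalities
\[
W^{2,p}(M) = W^{2,p}_{0}(M) = \tW^{2,p}_{0}(M) = \tW^{2,p}(M).
\]
In particular, $W^{2,p}_{0}(M) = W^{2,p}(M)$, which contradicts the standing hypothesis that the inclusion $W^{2,p}_{0}(M)\subseteq W^{2,p}(M)$ is strict on this manifold. Hence (\ref{CZ})(p) cannot hold, as claimed.

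There is no real obstacle here: all the heavy lifting is already packaged into Proposition \ref{prop-density}, whose proof in turn relies on Milatovich's density theorem (Theorem \ref{th-Milatovich}) together with the Meyers--Serrin type result (Theorem \ref{th-MS}) and the elementary inequality $|\Delta u|\leq \sqrt{m}\,|\Hess(u)|$. The only point worth making explicit in the write-up is that geodesic completeness of $M$ is indeed assumed (it is built into the hypothesis via the cited construction of \cite{Ve-Counterexample}), so the hypotheses of Proposition \ref{prop-density} are fulfilled and the implication applies.

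The second sentence of the corollary, concerning the existence of such manifolds for $p\geq 2$ with finite volume and sectional curvature growing like $r^{4}$, is not something we need to prove: it is quoted as an input from \cite{Ve-Counterexample}. So the corollary reduces entirely to the one-line contrapositive argument above, and I would present it in exactly that form, emphasizing the conceptual message already highlighted in the preceding remark: the Calder\'on--Zygmund inequality is a strictly stronger property than density of $C^{\infty}_{c}(M)$ in $W^{2,p}(M)$, and any failure of the latter automatically produces a failure of the former.
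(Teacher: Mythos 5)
Your argument is exactly the paper's intended one: the corollary is stated immediately after Proposition \ref{prop-density} precisely because it is its contrapositive, and you apply that proposition correctly, noting that completeness is part of the hypothesis so its conditions are met. Nothing is missing.
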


\begin{remark}
The previous discussion shows that finding counterexamples to second order density results is  harder than finding counterexamples to Calder\'on-Zygmund inequalities. On the other hand, from the purely Calder\'on-Zygmund viewpoint, a nice feature of Example \ref{example-L2} and Theorem \ref{th-Li} is that the violating sequence of compactly supported functions is rather explicit. This is useful in some instances, see again \cite{Ve-Counterexample}.
\end{remark}

\section{$L^{p}$-Hessian estimates in $C^{\infty}_{c}$: the local-to-global approach} \label{section-localtoglobal}

In order to extend on a manifold, with appropriate adjustments, the classical integral inequalities enjoyed by the Euclidean space (or by other model geometries) there is a classical strategy that moves from local considerations to the desired global result via a gluing procedure.\smallskip

When specified to Calder\'on-Zygmund, this strategy is articulated in the following steps.

\begin{itemize}
\item [(A)]  On ``small'' balls of a complete manifold, where the  metric is $C^{1}$  controlled, we have the validity of \eqref{CZ}(p):
 	\begin{itemize}
	\item [(A.1)] either by {\it transplanting} the Euclidean estimate (soft)
	\item [(A.2)] or by proving directly \eqref{CZ}(p) on the manifold (hard).
	\end{itemize}

 \item [(B)] Lifting Euclidean inequalities may give rise to first order (gradient) terms that we have to get rid of.
 
 \item [(C)] If the intersection multiplicity  of the balls is uniformly controlled, then, we can sum up the local inequalities and conclude the validity of the global one.
\end{itemize}

\begin{remark}
This method is robust enough to encompass Lipschitz maps $u: (M,g) \to (N,g)$ between complete manifolds with positive harmonic radii. As we shall see in Section \ref{section-maps}, this gives rise to a quantitative {\it nonlinear} \ref{CZ}(p) that, in case of compact manifolds, can be applied to isometric immersions in the same spirit of Section \ref{section-isometric-euclidean}.
\end{remark}

Let's implement the strategy step by step.

\subsection{Controlling the metric coefficients in small balls}\label{section-radii}

In a coordinate system $\phi = (x^{1},\cdots,x^{m})$ where each coordinate function is harmonic, the local expression of the Laplace-Beltrami operator is purely second order and displays a $0^{th}$ order dependence on the metric coefficients. This follows from the fact that $0 = \Delta x^{j} = g^{ik} \Gamma^{j}_{ki}$ where, we recall, $\Gamma^{i}_{ki}$ denote the Christoffel symbols with respect to $\phi$. Nowadays, it is very well understood that, generically, in these coordinates we have the maximal regularity control on the metric coefficients under the minimal amount of control of the curvature tensor. The game is then to estimate from below, in terms of the geometry, the radius of the ball where such coordinates are defined. This very classical topic goes under the name of {\it estimates of the harmonic radius}. Beside the original papers by J. Jost and H. Karcher, \cite{JK-Manuscripta}, M. Anderson, \cite{An-Invent}, and Anderson and J. Cheeger, \cite{AC-JDG}, we refer the reader to \cite{HH-Roma} for a panoramic view on the subject with applications to convergence theory.
Due to its crucial role in what we are going to do, we take some time to define everything with some degree of detail. Let us start by introducing the following

\begin{definition}[a pletora of radii]
Let $(M,g)$ be a Riemannian manifold and let $x \in M$ be a fixed reference point.\smallskip

\noindent {\rm - } The {\bf conjugate radius at $x$}  is the supremum of all $R>0$ such that the exponential map $\exp_{x}: \bb_{R}(0)\subset T_{p}M \to B_{R}(x)$ is non-singular, hence an immersion. We denote this radius by $r_{\conj}(x)$.\smallskip

\noindent {\rm - } The {\bf injectivity radius at $x$}  is the supremum of all $R>0$ such that the exponential map $\exp_{x}: \bb_{R}(0)\subset T_{p}M \to B_{R}(x)$ is a diffeomorphism.  We denote this radius by $r_{\inj}(x)$ and the corresponding coordinates are called {\bf normal coordinates at $x$}.\smallskip

\noindent {\rm - }  The {\bf Euclidean radius at $x$} is the supremum of all $R>0$ such that there exists a coordinate chart $\phi : B_{R}(x) \to \rr^{m}$ satisfying
\begin{itemize}
 \item [a)] $2^{-1} [\delta_{ij}] \leq [g_{ij}] \leq 2 [\delta_{ij}]$.
\end{itemize}
We denote this radius by $r_{\euc}(x)$. \smallskip

\noindent {\rm - }  The {\bf $C^{0,\a}$-harmonic radius at $x$}, $\a < 1/m$, is the supremum of all $R>0$ such that there exists a coordinate chart $\phi : B_{R}(x) \to \rr^{m}$ satisfying
 \begin{itemize}
 \item [a)] $2^{-1} [\delta_{ij}] \leq [g_{ij}] \leq 2 [\delta_{ij}]$;\smallskip
 \item [b)] $R^{ 1 - m \a}\| g_{ij} \|_{\a,B_{R}(x)} \leq 1$;\smallskip
 \item [c)] $\phi$ is a harmonic map, i.e., $g^{ij} \Gamma^{k}_{ij} =0$ and thus $\Delta = g^{ij}\partial_{ij}$.
\end{itemize}
We denote this radius by $r_{\harm,C^{0,\a}}(x)$.\smallskip

\noindent {\rm - }  The {\bf $C^{k,\a}$ harmonic radius at $x$}, $k \geq 1$, is the supremum of all $R>0$ such that there exists a coordinate chart $\phi : B_{R}(x) \to \rr^{m}$ satisfying
\begin{itemize}
 \item [a)] $2^{-1} [\delta_{ij}] \leq [g_{ij}] \leq 2 [\delta_{ij}]$;\smallskip
 \item [b$_{1}$)] $\sum_{1 \leq |J| \leq k}R^{|J|}\| \partial^{J} g_{ij} \|_{0,B_{R}(x)} \leq 1$;\smallskip
 \item [b$_{2}$)] $R^{k+\alpha}\| \partial^{J} g_{ij} \|_{{\alpha},B_{R}(x)} \leq 1$, for all $|J|=k$;\smallskip
 \item [c)] $\phi$ is a harmonic map.
\end{itemize}
We denote this radius by $r_{\harm,C^{k,\a}}(x)$.

\noindent {\rm -} The {\bf $W^{k,p}$ harmonic radius at $x$}, $k\geq 1$ and $1<p<+\infty$, is the supremum of all $R>0$ such that there exists a coordinate chart $\phi : B_{R}(x) \to \rr^{m}$ satisfying
\begin{itemize}
  \item [a)] $2^{-1} [\delta_{ij}] \leq [g_{ij}] \leq 2 [\delta_{ij}]$;\smallskip
  \item [b)] $\sum_{1 \leq |J| \leq k}R^{|J|-m/p}\| \partial^{J} g_{ij} \|_{L^{p}(B_{R}(x))} \leq 1$;\smallskip
  \item [c)] $\phi$ is a harmonic map.
\end{itemize}
We denote this radius by $r_{\harm,W^{k,p}}(x)$.
\end{definition}

\begin{remark}
 All these radii are positive and depend continuously on the reference point. In particular, if $M$ is compact,
 \[
 r_{\bullet}(M):=\inf_{x\in M} r_{\bullet}(x) >0.
 \]
 As a matter of fact, the Euclidean and the harmonic radii are even Lipschitz continuous. This is not generally true for the injectivity radius and the reason is the possible presence of conjugate points. We shall come back to this important fact in a moment.
\end{remark}
\begin{remark}
There is a hierarchy between  the harmonic radii collected in the above definition. Accordingly, in terms of the geometry, a lower estimate of $r_{\harm, C^{k,\a}}$ is more demanding  than an estimate of $r_{\harm, C^{0,\a}}$ which in turn, by Sobolev embeddings, is implied by a lower estimate of $r_{\harm,W^{k,p}}$. The Euclidean radius,  introduced by B. G\"uneysu in \cite{Gu-book, BG-preprint}, is the roughest of this family of radii as it  captures only the local bi-Lip content of the metric. Nevertheless, working within the Euclidean radius is enough to get interesting heat kernel estimates and therefore it would be important to understand how much geometry must be controlled to get a lower estimate. Needless to say, every time we are able to control the $C^{0,\a}$-harmonic radius (a lower Ricci bound joint with a lower injectivity radius bound is enough) we can control the Euclidean radius but this looks too much demanding. The example presented in Appendix \ref{appendix-euclidean} shows that controlling the injectivity radius is not enough.
\end{remark}
By combining  curvature bounds with injectivity radius estimates gives a uniform lower control on the harmonic (and Euclidean) radius, where the metric coefficients are close, in the above specified sense, to the Euclidean's. We shall need the following important result concerning the $C^{1,\a}$-harmonic radius.
\begin{theorem}[Jost-Karcher, Anderson]\label{th-JK-A}
 Let $B_{2r}(x)$ be a compact ball in the Riemannian manifold $(M,g)$. Assume that either one of the following conditions is satisfied:
\begin{itemize}
 \item [(a)]  Let $\frak{i}(x):= \min( r , r_{\inj}(x))>0$ and assume that $r_{\conj}(y) \geq \frak{i}(x)$ for every $y \in B_{\frac{\frak{i}(x)}{2}}(x)$.\smallskip
 \item [(b)] Let $\displaystyle \frak{i}(x) := \min (r , \inf_{y \in B_{r}(x)} r_{\inj}(y))$ and assume that $\frak{i}(x)>0$.
\end{itemize}
Finally, let $R (x)= \| \ric \|_{L^{\infty}(B_{2r}(x))}$. Then, there exists $H=H(r,R(x),\frak{i}(x),m,\a)>0$ such that
\[
r_{\harm,C^{1,\a}}(x ) \geq H.
\]
\end{theorem}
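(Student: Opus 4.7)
I would argue by contradiction, via the Anderson--Cheeger--Gromov pointed precompactness theory. Fix the parameters $r, R(x), \mathfrak{i}(x), m, \alpha$ and suppose that no lower bound $H$ exists. Then one can extract a sequence of pointed Riemannian $m$-manifolds $(M_k, g_k, x_k)$ satisfying hypothesis (a) or (b) with these parameters, yet with $\rho_k := r_{\harm, C^{1,\alpha}}(x_k) \to 0$. Rescale to $\tilde g_k := \rho_k^{-2} g_k$. Under this rescaling the $C^{1,\alpha}$-harmonic radius at $x_k$ becomes exactly $1$ (by its scaling law), while
\[
\|\ric_{\tilde g_k}\|_{L^{\infty}} \;\le\; \rho_k^{2} R(x_k) \;\longrightarrow\; 0,
\]
and the injectivity (resp.\ conjugate) radius of $\tilde g_k$ at $x_k$ goes to $+\infty$ as $\rho_k \to 0$. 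Thus, under either (a) or (b), the rescaled sequence sits inside a uniformly noncollapsed family of almost Ricci-flat manifolds.

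\textbf{Extraction of a limit.} Apply the pointed $C^{1,\alpha'}$-precompactness theorem ($\alpha' < \alpha$) in this class: up to a subsequence, $(M_k, \tilde g_k, x_k)$ converges in the pointed $C^{1,\alpha'}$ topology to a complete pointed Riemannian manifold $(M_\infty, g_\infty, x_\infty)$ whose metric is of class $C^{1,\alpha}$, is Ricci-flat, and which has infinite injectivity radius at $x_\infty$. The key input here is precisely the regularity theory for the harmonic radius that we wish to establish at the pre-rescaled level, so in the contradiction scheme one must be careful to use only the weaker $C^{1,\alpha'}$-compactness (a soft consequence of Ricci and injectivity bounds) to avoid a circular argument.

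\textbf{Upgrading the limit.} On $M_\infty$, in harmonic coordinates, the metric satisfies the quasilinear elliptic PDE
\[
-\tfrac{1}{2}\, g_\infty^{k\ell}\, \partial_k \partial_\ell (g_\infty)_{ij} \;+\; Q(g_\infty, \partial g_\infty) \;=\; (\ric_{g_\infty})_{ij} \;=\; 0,
\]
where $Q$ is a universal polynomial in $g_\infty^{-1}$ and $\partial g_\infty$. Since $g_\infty \in C^{1,\alpha}$, standard Schauder bootstrapping promotes $g_\infty$ to $C^{\infty}$ (in fact real-analytic) in such charts. Consequently the defining $C^{1,\alpha}$-harmonic chart inequalities (a), (b$_1$), (b$_2$), (c) are satisfied with strict slack on some ball $B_{1+2\delta_0}(x_\infty)$.

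\textbf{Contradiction step.} Pull these limiting harmonic charts back to the approximating manifolds. Using the $C^{1,\alpha'}$-convergence $\tilde g_k \to g_\infty$, solve the Dirichlet problem for the harmonic-coordinate system $\Delta_{\tilde g_k} u^i_k = 0$ on $B_{1+\delta_0}(x_k)$ with boundary values coming from the limit chart; Schauder theory, applied uniformly in $k$, yields harmonic coordinate charts $\phi_k$ on $B_{1+\delta_0}(x_k)$ converging in $C^{2,\alpha'}$ to the limit chart. By stability of the Schauder norms under $C^{1,\alpha'}$-close metric perturbations, the defining inequalities of the $C^{1,\alpha}$-harmonic radius continue to hold on these charts for all $k$ large, giving $r_{\harm,C^{1,\alpha}}(x_k) \geq 1+\delta_0/2$ in the rescaled metric. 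This contradicts the normalization $\rho_k$-value $1$ and closes the argument. The delicate point I expect to require the most care is exactly this last transfer step: the $C^{1,\alpha}$ bounds defining the harmonic radius are sharp at the endpoint regularity of the convergence, so one must cushion them with the strict slack gained on the smooth limit in order to pass them back to the approximating sequence.
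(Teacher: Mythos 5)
The paper states this theorem without proof, attributing it to Jost--Karcher \cite{JK-Manuscripta} and Anderson \cite{An-Invent}, so there is no in-paper argument to compare against. Your proposal follows the blow-up/contradiction strategy that is indeed Anderson's original method (for case (b)); however, there are two genuine gaps in the way you have set it up.

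First, the circularity you flag is real and is not resolved by appealing to ``a soft consequence of Ricci and injectivity bounds''. There is no pre-existing pointed $C^{1,\alpha'}$-precompactness theorem under mere Ricci-plus-injectivity control: such a statement is precisely an equivalent form of the harmonic-radius lower bound you are trying to prove. The correct escape is internal to the blow-up: the normalization $r_{\harm,C^{1,\alpha}}(\tilde g_k, x_k)=1$ gives you, by \emph{definition}, a harmonic chart with quantitative $C^{1,\alpha}$ bounds on $B_1(x_k)$, and the elementary $1$-Lipschitz property $r_{\harm}(y)\geq r_{\harm}(x)-d(x,y)$ (obtained by restricting the chart at $x$ to a smaller ball around $y$, noting that the scaled conditions $b_1), b_2)$ only improve) gives uniform $C^{1,\alpha}$ control near every $y\in B_s(x_k)$, $s<1$. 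That is what furnishes Arzel\`a--Ascoli precompactness, with no external black box. Second, even with this fix, rescaling at $x_k$ itself only gives precompactness on a ball strictly smaller than the harmonic chart, so the equality case saturating $r_{\harm}(x_k)=1$ may escape to the part of $B_1(x_k)$ you cannot see in the limit. Anderson's argument avoids this by selecting the blow-up point $y_k$ as a (near-)minimizer of the weighted quantity $r_{\harm}(y)/\dist(y,\partial B_r(x_k))$; at such a point one obtains $r_{\harm}\geq 1-\epsilon_k$ on balls of radius $\delta_k\to\infty$ in the rescaled metric, which is what lets the limit exist on all of $\rr^m$, be Ricci-flat with infinite injectivity radius, and thus (by Bishop--Gromov rigidity and real-analyticity in harmonic coordinates) be isometric to $\rr^m$ --- whence $r_{\harm}(x_\infty)=\infty$, contradicting the normalization. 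Your ``transfer the slack back to the approximants'' endgame can be made to work, but it is more delicate than Anderson's cone-rigidity contradiction precisely because $C^{1,\alpha'}$ convergence does not control the sharp $\alpha$-H\"older seminorms appearing in the definition of the radius; you would need to re-derive the $C^{1,\alpha}$ chart bounds on the approximants from the elliptic equation, using the vanishing Ricci bound, rather than from the convergence itself. Finally, case (a) is not directly covered by this scheme: it requires either Jost--Karcher's direct Jacobi-field construction under a two-sided sectional bound, or the reduction of (a) to (b) via Xu's $1$-Lipschitz estimate for $r_{\inj}$, which your proposal does not mention.
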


\begin{remark}\label{rem-harmonicradius1}
As it is stated, case (a) of Theorem \ref{th-JK-A} may look slightly unconventional. It represents an abstract version of Jost-Karcher estimate in \cite{JK-Manuscripta}. In fact, note that:
\begin{itemize}
\item The peculiarity of case (a) is that, from the viewpoint of the injectivity radius, its value at a single point is enough to have an estimate of the harmonic radius at that point. This is especially relevant when the conjugate locus is empty.
 \item If $\sect \leq S^{2}$ then, by Rauch comparison, $r_{\conj}(y) \geq \pi/S$. It follows that Theorem \ref{th-JK-A} (a) applies when $| \sect | \leq S^{2}$ by choosing  $r \leq \pi/S$. This is the original formulation by Jost-Karcher.
 \item If $\sect \leq S^{2}$,  $r \leq \pi/S$, and $\bb_{r}(0) \subseteq T_{x}M$ is endowed with the pull-back metric $\hat g = \exp_{x}^{\ast}g$ then $\hat r_{\inj}(0) \geq r$. It follows that, when $| \sect | \leq S^{2}$, Theorem \ref{th-JK-A} (a) applies to $(\bb_{r}(0),\hat g)$ and gives a harmonic radius bound $H>0$ depending only on $S,m$ and $\a$. This has many applications in obtaining local uniform estimates of solutions of PDEs under sectional curvature assumptions.  In this respect, we suggest the reader to take a look at the illuminating paper \cite{Ca-Pisa} by G. Carron.
\end{itemize}
 \end{remark}

\begin{remark}
Case (a) of Theorem \ref{th-JK-A} reduces to (b) thanks to the following  result by S. Xu, \cite{Xu-CCM}, showing that, in a very precise sense, the obstruction for $r_{\inj}$ to be $1$-Lipschitz continuous is inside the conjugate locus.
\begin{theorem}
Let $(M,g)$ be a Riemannian manifold. Given $x \in M$,
\[
r_{\inj}(y) \geq \min (r_{\inj}(x),r_{\conj}(y)) - d(x,y),\quad \forall y \in B_{\frac{r_{\inj}(x)}{2}}(x).
\]
In particular, if $M$ is complete and does not contain conjugate points, then the function $x \mapsto r_{\inj}(x)$ is $1$-Lipschitz.
\end{theorem}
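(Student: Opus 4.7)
I would argue by contradiction. Assume that $y\in B_{r_{\inj}(x)/2}(x)$ satisfies $r:=r_{\inj}(y)<\min\{r_{\inj}(x),r_{\conj}(y)\}-d(x,y)$, and set $d:=d(x,y)$. Since one always has $r_{\inj}(\cdot)\le r_{\conj}(\cdot)$ (the exponential map ceases to be immersive past $r_{\conj}$), the strict inequality $r<r_{\conj}(y)$ places us in the Klingenberg regime: there exist a closest cut point $q$ of $y$ at distance exactly $r$ and two distinct minimizing geodesics $\gamma_\pm\colon[0,r]\to M$ from $y$ to $q$ meeting antipodally at $q$, so that $\eta:=\gamma_+\cup\gamma_-^{-1}$ is a smooth closed geodesic of length $2r$ through $y$ and $q$.

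\textbf{Lifting to normal coordinates at $x$.} By the triangle inequality, every point $z\in\eta$ satisfies $d(x,z)\le d+r<r_{\inj}(x)$, so $\eta\subset B_{r_{\inj}(x)}(x)$. The lift $\tilde\eta:=\exp_x^{-1}\circ\eta$ is then a smooth closed $\hat g$-geodesic of length $2r$ inside the Euclidean ball $U:=\bb_{r_{\inj}(x)}(0)\subset T_xM$ carrying the pull-back metric $\hat g:=\exp_x^{\ast}g$. Crucially, the $\hat g$-exponential map at $0$ is the identity, so radial straight lines from $0$ are the unique unit-speed $\hat g$-minimizers leaving $0$, $r_{\inj}^{\hat g}(0)=r_{\inj}(x)$, and no $\hat g$-geodesic issuing from $0$ can return to $0$ inside $U$.

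\textbf{Producing the contradiction.} Consider the radial function $\rho(v):=|v|$ on $U$, which by the Gauss lemma coincides with $d_{\hat g}(0,\cdot)$. Along the compact curve $\tilde\eta$ the function $\rho$ attains its minimum at some point $p_\star$ with $\rho(p_\star)\le d$; note that $\tilde\eta$ avoids $0$, otherwise a $\hat g$-geodesic starting at $0$ would close up inside $U$. First variation forces $\tilde\eta'(p_\star)$ to be $\hat g$-orthogonal to the radial direction at $p_\star$. Concatenating the radial segment from $0$ to $p_\star$, the full loop $\tilde\eta$ based at $p_\star$, and the reversed radial segment, one obtains a broken loop at $0$ of total length $2\rho(p_\star)+2r<2r_{\inj}(x)$, entirely contained in $U$. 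A Birkhoff-type length minimization in the homotopy class of this broken loop based at $0$ would produce a nontrivial $\hat g$-geodesic loop at $0$ of length strictly less than $2r_{\inj}(x)$, contradicting the fact that every $\hat g$-geodesic emanating from $0$ is an injective Euclidean ray which cannot return to $0$ in $U$. The main technical obstacle is exactly this shortening step: making the variational minimization rigorous in the \emph{incomplete} Riemannian domain $(U,\hat g)$ while preserving the basepoint $0$. The antipodal smoothness condition $\gamma_+'(r)=-\gamma_-'(r)$ at $q$ (ensuring that $\tilde\eta$ is genuinely smooth rather than only piecewise geodesic) seems essential to keep the Morse-theoretic machinery applicable.

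\textbf{Global Lipschitz conclusion.} In the absence of conjugate points one has $r_{\conj}\equiv+\infty$, so the local estimate reduces to $r_{\inj}(y)\ge r_{\inj}(x)-d(x,y)$ whenever $d(x,y)<r_{\inj}(x)/2$. Covering an arbitrary minimizing segment between two points of $M$ by finitely many overlapping balls of half the local injectivity radius (using the continuity of $r_{\inj}$, which already follows from the local bound), and iterating the inequality with base-point swapped at each step, upgrades this to the global estimate $|r_{\inj}(x)-r_{\inj}(y)|\le d(x,y)$.
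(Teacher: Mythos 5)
The paper does not contain a proof of this theorem: it is quoted as a black-box result from \cite{Xu-CCM} (used to reduce case (a) of Theorem \ref{th-JK-A} to case (b)), so there is no in-paper argument for you to match. Judged on its own merits, your attempt has two concrete problems. First, the Klingenberg lemma produces a geodesic \emph{loop} $\eta=\gamma_+\cup\gamma_-^{-1}$ that is smooth at the far point $q$ (because $\gamma_+'(r)=-\gamma_-'(r)$) but has a \emph{corner} at $y$: the two distinct minimizing geodesics $\gamma_\pm$ leave $y$ with different velocities. Your remark that the antipodal condition at $q$ ``ensures that $\tilde\eta$ is genuinely smooth rather than only piecewise geodesic'' has the location of the break wrong, and the object you lift is a broken geodesic, not a closed one; this also invalidates the first-variation orthogonality at $p_\star$ in the (possible) case $p_\star=\tilde y$.

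Second, and decisively, the Birkhoff-type shortening cannot yield the contradiction you want even in principle. The broken loop at $0$ that you build lies in the simply connected domain $U=\bb_{r_{\inj}(x)}(0)$ and is therefore null-homotopic; a length-minimizing deformation in a trivial fixed-basepoint class simply collapses it to the constant loop at $0$. There is no topological obstruction forcing convergence to a nontrivial $\hat g$-geodesic loop at $0$, and hence no conflict with the fact that $\hat g$-geodesics through $0$ are injective rays. You flag this step as the ``main technical obstacle,'' but the difficulty is not the incompleteness of $(U,\hat g)$: the Morse-theoretic mechanism you invoke simply does not apply. A workable proof of this shape must show \emph{directly} that every geodesic loop at $\tilde y$ contained in $U$ has length at least $2\bigl(r_{\inj}(x)-|\tilde y|\bigr)$ (for instance via a pointwise argument localized near $q$ using $|\nabla\rho|=1$ and minimality of $\gamma_\pm$), not by a general curve-shortening flow. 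It is also worth noticing that, for the ``in particular'' statement you actually need, there is a much shorter route that bypasses Klingenberg entirely: when $M$ is complete without conjugate points, lift to the universal cover $\tilde M$ (on which every geodesic is minimizing, since $\exp$ is a diffeomorphism); then $2\,r_{\inj}(p)$ equals the displacement $\min_{\gamma\ne e}d(\tilde p,\gamma\tilde p)$ of the deck group, which is $2$-Lipschitz by the triangle inequality, giving the global $1$-Lipschitz conclusion with no restriction to $B_{r_{\inj}(x)/2}(x)$.
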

\end{remark}
 
\begin{remark}
 Case (b) of Theorem \ref{th-JK-A} is the contribution of Anderson, \cite{An-Invent}, who first realized that a sectional curvature absolute bound can be replaced by a Ricci curvature absolute bound\footnote{in the language of A. Naber we are switching from ``Hessian'' to ``Laplacian'' bounds.} up to paying the prize of imposing an injectivity radius constraint not only at the needed point but in a neighborhood of this incriminated point. Equivalently, switching from sectional to Ricci curvature implies that the pointwise harmonic radius estimate depends on the behaviour of injectivity radius in a neighborhood of the point.
\end{remark}

\subsection{Transplanting the Euclidean inequality}\label{section-localinequality1}

The origin of everything contained in these notes is the celebrated Calder\'on-Zygmund inequality in the Euclidean space $\rr^{n}$. A standard reference, for those oriented in PDEs, is D. Gilbarg and N. Trudinger bible; see \cite[Chapter 9]{GT}. Let $1<p<+\infty$. Then, there exists a constant $C =C(n,p)>0$ such that
\[
 \| \Hess(u) \|_{L^{p}} \leq C \cdot \| \Delta u \|_{L^{p}}
\]
holds for every $u \in C^{\infty}_{c}(\rr^{n})$. The classical proof starts from the representation formula
\[
\frac{\partial^{|J|}}{\partial x^{J}}u(x) = - \int_{\rr^{n}} \frac{\partial^{|J|}}{\partial x^{J}}G(x-y) \Delta u(y) dy,
\]
where $G$ is the (possibly signed) Green kernel of $\rr^{n}$, and consists in obtaining estimates of the corresponding singular integrals. A completely new approach, based on iterative estimates of the distribution function of the Hardy-Littlewood maximal function of $|\Hess(u)|^{2}$ is due to L. Caffarelli, \cite{Ca-Annals, Ca-Milan}. This latter proof, that works for fully nonlinear operators, later inspired  L. Wang, \cite{Wa-ActaSinica}, who provided an argument that looks very suitable to be adapted on a Riemannian manifold. See Section \ref{section-localinequality2}.\smallskip

The Calder\'on-Zygmund  inequality for the Laplacian actually extends to a uniformly elliptic operator of second order  $L$ and localizes on compact domains without any boundary conditions. More precisely, given $\O \Subset \rr^{n}$, let $L =  a^{ij}\partial^{2}_{ij}$ with $\|a^{ij}\|_{C^{1} }(\O) \leq A$, $a^{ij}v^{i}v^{j} \geq \l |v|^{2}$, for some constants $A,\l>0$. Accordingly, for any domain $\O_{1} \Subset \O_{2} \Subset \O$, there exists a constant $C>0$ depending on $\Omega_{1},\Omega_{2}$, $p$, $m$, the ellipticity constant $\l>0$ and the $C^{1}$-bound $A$, such that
\[
\| \Hess(u) \|_{L^{p}(\O_{1})} \leq C \{\| u \|_{L^{p}(\O_{2})} + \| Lu\|_{L^{p}(\O_{2})} \}
\]
holds for every $u \in C^{\infty}(\O_{2})$. Since, in harmonic coordinates,
\begin{itemize}
 \item $\Hess(u)_{ij} = \partial^{2}_{ij}u - \Gamma_{ij}^{k}\partial_{k} u$ with $\Gamma_{ij}^{k}=\Gamma(g,\partial g)$\smallskip
 \item $\Delta u = g^{ij}\partial^{2}_{ij}u$\smallskip
 \item $|\nabla u|^{2} = g^{ij}\partial_{i} u \, \partial_{j} u$,\smallskip
 \item $\dv=\sqrt{\det g}\, \dx$
\end{itemize}
from these considerations we obtain that, within the $C^{1,\a}$ harmonic radius of $x \in M$, the Euclidean inequality transplants to $M$.

\begin{lemma}[Euclidean CZ transplanted]\label{lemma-transplanted}
 Let $x \in M$ and $0< 2r <r_{H}(x):=r_{\harm,C^{1,\a}}(x)$. Then, for any $u$ on $B_{r_{H}}(x)$:
 \[
C^{-1} \| 1_{B_{r}(x)} \Hess(u) \|_{L^{p}} \leq \| 1_{B_{2r}(x)} u \|_{L^{p}} + \| 1_{B_{2r}(x)} \Delta u \|_{L^{p}} + \|1_{B_{2r}(x)} \nabla u \|_{L^{p}}
 \]
 for some constant $C=C(m,r_{H}(x),p)>0$. 
\end{lemma}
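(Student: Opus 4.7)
My approach will follow exactly the recipe articulated in the surrounding discussion: I would push $u$ down to a $C^{1,\alpha}$-harmonic chart at $x$, apply there the classical Euclidean interior Calder\'on--Zygmund estimate for a uniformly elliptic operator with bounded $C^{1}$ coefficients, and then convert flat partial derivatives back to the Riemannian Hessian via the Christoffel symbols. The gradient term on the right-hand side of the lemma will emerge precisely from this last step.

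First, I would fix a harmonic chart $\phi : B_{r_{H}(x)}(x) \to \rr^{m}$ realizing the $C^{1,\alpha}$-harmonic radius at $x$. Conditions (a)--(c) defining $r_{\harm,C^{1,\alpha}}$ supply simultaneously the ellipticity bound $\tfrac{1}{2}|\xi|^{2}\leq g^{ij}\xi_{i}\xi_{j}\leq 2|\xi|^{2}$, a structural $C^{1}$ bound $\|g^{ij}\|_{C^{1}}\leq A(r_{H}(x))$, the coordinate expression $\Delta u = g^{ij}\partial^{2}_{ij}u$, and the comparability of volume elements $2^{-m/2}\,\dx \leq \dv \leq 2^{m/2}\,\dx$. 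The same metric bounds force $\tilde B_{\rho/\sqrt{2}} \subseteq B_{\rho}(x) \subseteq \tilde B_{\sqrt{2}\rho}$, where $\tilde B_{\rho} := \phi^{-1}(\bb_{\rho}(\phi(x)))$, so that, up to rechoosing $r$ by a universal factor, I may freely replace intrinsic balls by concentric Euclidean coordinate balls.

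Next, I would apply the standard Euclidean interior $W^{2,p}$ estimate recalled above to the uniformly elliptic operator $L := g^{ij}\partial^{2}_{ij}$ on the pair $\tilde B_{r} \Subset \tilde B_{2r}$, obtaining
\[
\sum_{i,j} \|\partial^{2}_{ij} u\|_{L^{p}(\tilde B_{r},\dx)} \leq C_{1}\bigl(\|u\|_{L^{p}(\tilde B_{2r},\dx)} + \|Lu\|_{L^{p}(\tilde B_{2r},\dx)}\bigr),
\]
with $C_{1}=C_{1}(m,p,r_{H}(x))$ depending only on the ellipticity constant, the $C^{1}$-bound on $g^{ij}$, and the radius. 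The identity $\Hess(u)_{ij} = \partial^{2}_{ij} u - \Gamma^{k}_{ij}\partial_{k} u$, combined with the $L^{\infty}$-bound on $\Gamma^{k}_{ij}=\Gamma(g,\partial g)$ coming from the $C^{1,\alpha}$ metric control, then yields pointwise $|\Hess(u)|_{g} \leq C_{2}(|\partial^{2} u|+|\partial u|)$, while $|\partial u|$ and $|\nabla u|_{g}$ are comparable through $g^{ij}$. Plugging these into the Euclidean inequality, using $Lu = \Delta u$ in harmonic coordinates, and trading $\dx$ for $\dv$ via the uniform Jacobian bound, delivers the required estimate.

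The main point of care, and essentially the only step that is not a direct quotation, is to ensure that the zeroth-order factor produced by the Christoffel symbols lies in $L^{\infty}$, so that the last term on the right of the lemma is a genuine $\|\nabla u\|_{L^{p}}$ rather than a norm of some product. This is exactly why one anchors the argument to $r_{\harm,C^{1,\alpha}}$ rather than to the weaker $W^{k,p}$ or Euclidean radii discussed earlier. Beyond this bookkeeping I do not anticipate any real obstacle, since the lemma is a pure transplantation of the classical Euclidean interior $W^{2,p}$ theory along the $C^{1,\alpha}$ harmonic chart, the geometry entering only through the quantitative metric control built into $r_{\harm,C^{1,\alpha}}(x)$.
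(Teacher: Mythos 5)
Your proposal is correct and follows essentially the same argument sketched in the paper: transplant the Euclidean interior $W^{2,p}$ estimate for a uniformly elliptic operator with $C^{1}$-bounded coefficients along a $C^{1,\alpha}$-harmonic chart, observe $\Delta u = g^{ij}\partial^{2}_{ij}u$ in such coordinates, and recover the Riemannian Hessian via $\Hess(u)_{ij}=\partial^{2}_{ij}u-\Gamma^{k}_{ij}\partial_{k}u$ at the cost of a gradient term controlled by the $L^{\infty}$ bound on the Christoffel symbols. Your added remarks on ball and volume-element comparability are exactly the bookkeeping the paper leaves implicit, so there is no discrepancy.
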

In the sequel, in order to glue local inequalities together, we will ask $r_{H}$ to be uniform in $x$. This will be done by requiring suitable geometric conditions as explained in Theorem \ref{th-JK-A}.

\subsection{Local inequality without transplantation}\label{section-localinequality2}

Transplanting is the easiest but not the unique way to produce local \eqref{CZ}(p) on manifolds. If we have a control on the Sectional curvature, using L. Wang ideas we can prove the following partial result. See \cite{GMP-progress}.
 
 \begin{theorem}\label{th-nontrasplanted}
 Let $(M,g)$ be a complete $m$-dimensional Riemannian manifold.  Fix $\bx \in M$ and let $S=S(\bx)>0$  and $D = D(\bx)>0$ be s.t.
\[
|\sect| \leq S, \quad \text{on }B_{4}(\bx)
 \]
 and
\[
\vol B_{2R}(x)\, \leq D \cdot  \vol B_{R}(x) ,\,\, \forall B_{2R}(x) \Subset B_{4}(\bx).
\]
Let $p \in (m/2,+\infty) \cap [2,+\infty)$. Then, there exists a constant $C = C(D,S,m,p)>0$  such that the following inequality
\begin{align*}
 \int_{B_{1}(\bx)} | \Hess(u) |^{p}  \leq  C \left\{ \int_{B_{2}(\bx)} |u|^{p}  +  \int_{B_{2}(\bx)} |\Delta u|^{p} \right\}.
\end{align*}
holds for every $u \in C^{2}(B_{4}(\bx))$.
\end{theorem}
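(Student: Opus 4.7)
The plan is to adapt to the Riemannian setting L. Wang's approach \cite{Wa-ActaSinica}, itself a reworking of Caffarelli's method \cite{Ca-Annals, Ca-Milan}, based on an iterative ``good-$\lambda$'' decay for the distribution function of a Hardy-Littlewood maximal function applied to $|\Hess(u)|^{2}$. The point of avoiding transplantation (as in Lemma \ref{lemma-transplanted}) is precisely that this intrinsic argument never produces a first-order gradient term, which is why no gradient norm appears in the statement.

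The hypothesis $|\sect|\leq S$ on $B_{4}(\bx)$ delivers two local tools with constants depending only on $S$ and $m$. By Bishop-Gromov the doubling constant $D$ is uniformly controlled on balls inside $B_{4}(\bx)$, so the maximal operator
\[
\CM f(x)=\sup_{r>0}\avint_{B_{r}(x)}|f|\,\dv
\]
is of weak type $(1,1)$ and strong type $(q,q)$ for every $1<q<\infty$ on $(B_{3}(\bx),\dv)$. Moreover, since $|\sect|\leq S$ rules out conjugate points up to distance $\pi/\sqrt{S}$, Theorem \ref{th-JK-A}(a), applied via the exponential pullback (cf.\ Remark \ref{rem-harmonicradius1}, third bullet), gives a uniform lower bound $r_{H}=r_{H}(S,m,\alpha)>0$ on the $C^{1,\alpha}$-harmonic radius at every $x\in B_{3}(\bx)$. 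On harmonic balls of radius $\leq r_{H}$, the Dirichlet problem for $\Delta$ is uniquely solvable and standard interior Schauder estimates for $\Delta=g^{ij}\partial_{ij}^{2}$ are available with uniform constants.

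The engine of the iteration is a harmonic replacement. Given a small ball $B_{2r}(x)\subset B_{3}(\bx)$ with $r<r_{H}/4$, let $h$ solve $\Delta h=0$ on $B_{2r}(x)$ with $h=u$ on $\partial B_{2r}(x)$. Applying the $L^{2}$ Calder\'on-Zygmund inequality (Proposition \ref{prop-L2CZ-cs}) to $u-h$ and absorbing the lower-order term via Poincar\'e,
\[
\avint_{B_{2r}(x)}|\Hess(u-h)|^{2}\leq C\avint_{B_{2r}(x)}|\Delta u|^{2},
\]
while Schauder theory for $h$ in harmonic coordinates, combined with Morrey-Sobolev embedding on balls of the chart, delivers the pointwise interior bound
\[
\sup_{B_{r}(x)}|\Hess(h)|^{2}\leq C\avint_{B_{2r}(x)}|\Hess(u)|^{2}+C\avint_{B_{2r}(x)}|\Delta u|^{2}.
\]
Here $p\geq 2$ is what permits the argument to be anchored on the $L^{2}$-theory above, while $p>m/2$ is what makes Morrey-Sobolev applicable to upgrade the integral bound on $\Hess(h)$ to the pointwise one.

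Combining these inputs via a Vitali covering of the superlevel set of $\CM(|\Hess(u)|^{2})$ inside $B_{1}(\bx)$ by balls of radius $<r_{H}/4$ yields the Caffarelli-Wang good-$\lambda$ inequality: for some $\mu>1$ and $\epsilon,\delta\in(0,1)$ depending only on $D,S,m,p$, and for every $\lambda$ above a fixed threshold $\lambda_{0}$,
\[
|\{\CM(|\Hess(u)|^{2})>\mu\lambda\}\cap B_{1}(\bx)|\leq \epsilon\,|\{\CM(|\Hess(u)|^{2})>\lambda\}\cap B_{1}(\bx)|+|\{\CM(|\Delta u|^{2})>\delta\lambda\}\cap B_{1}(\bx)|.
\]
Multiplying by $\lambda^{p/2-1}$, integrating, and choosing $\epsilon$ small enough to absorb the first right-hand term via the strong $(p/2,p/2)$ bound for $\CM$ produces the claimed $L^{p}$ Hessian estimate; the term $\int|u|^{p}$ absorbs the sub-threshold contribution $\lambda\leq\lambda_{0}$. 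The main obstacle will be the intrinsic ``$L^{\infty}$-from-$L^{2}$'' Hessian bound for harmonic functions on small geodesic balls with constants depending only on $S,m,p$: one has to pass between the coordinate Hessian $\partial_{ij}^{2}h$ and the intrinsic one $\partial_{ij}^{2}h-\Gamma_{ij}^{k}\partial_{k}h$, controlling the Christoffel correction via an interior gradient bound for $h$, and it is precisely at this step that $|\sect|\leq S$ -- rather than a one-sided Ricci bound as in Proposition \ref{prop-L2CZ-cs} -- enters decisively.
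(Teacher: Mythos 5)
Your proposal follows the same Caffarelli--Wang route as the paper: an iterative decay estimate for the distribution function of $\FM(|\Hess(u)|^{2})$, obtained from a local PDE comparison plus a Vitali covering, and then integrated against $\lambda^{p/2-1}$ via the Lebesgue--Stieltjes representation and the strong $(p/2,p/2)$ bound for the maximal operator. The harmonic-replacement step you sketch is a plausible instantiation of what the paper calls ``a mixture of PDEs theory,'' and the reduction to harmonic coordinates within the Jost--Karcher radius matches the paper's Remark~\ref{rem-harmonicradius1}.

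Two cautions. First, Proposition~\ref{prop-L2CZ-cs} is stated for $C^{\infty}_{c}(M)$, whereas $u-h$ lives on $B_{2r}(x)$ with zero Dirichlet data. Invoking it directly is not legitimate: the Bochner integration by parts for a function vanishing only on $\partial B_{2r}(x)$ produces a boundary integral involving the second fundamental form of the geodesic sphere (or, if one cuts off, extra lower-order terms), and that contribution has to be handled — the $L^{2}$ estimate for $\Hess(u-h)$ is really the $W^{2,2}$ Dirichlet estimate on a small ball, and the constant's dependence on the local geometry must be tracked. Second, and more importantly, your attribution of the restriction $p>m/2$ is likely wrong. For $\Delta h=0$ in a $C^{1,\alpha}$-harmonic chart, interior Schauder theory gives $h\in C^{3,\alpha}_{\mathrm{loc}}$, hence an $L^{\infty}$ bound on $\Hess(h)$ directly — no Morrey--Sobolev upgrade, no exponent threshold. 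If your scheme worked exactly as written, it would prove the theorem for every $p\geq 2$. The paper's own remark traces $p>m/2$ instead to \emph{mean value inequalities}, i.e., Moser-type $L^{\infty}$ estimates for solutions with a source term in $L^{q}$, which hit the De Giorgi--Nash--Moser integrability threshold $q>m/2$. This suggests either that the constraint is genuinely removable (which the authors say they ``expect,'' but have not achieved), or that the step where you pass from an integral bound to a pointwise bound hides a mean-value lemma whose proof is where $p>m/2$ actually enters. Your sketch should flag that discrepancy rather than resolve it by a mechanism the paper does not use.
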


 \begin{remark}
 Some observations on the statement are in order.
\begin{itemize} 
 \item [(a)] The large exponent is unnatural and depends on mean value inequalities. We expect to be able to lower it.
 \item [(b)] The sectional curvature bound is used to get locally uniform Schauder estimates. These are obtained from the Euclidean's in the lifted ball $(\bb_{\pi/\sqrt{S}}(0) , \hat g)$ of $T_{\bx}M$ endowed with the Riemannian metric $\hat g = \exp_{x}^{\ast}g$. This is possible because $L^{\infty}$ bounds of solutions of PDEs are stable by local isometries. Moreover, as we have already observed, Jost-Karcher estimate of the harmonic radius applies in this situation; see Theorem \ref{th-JK-A} and Remark \ref{rem-harmonicradius1}. Finally, in harmonic coodinates, Schauder estimates depend only on the $C^{1}$-norm of $g^{ij}$.
 \item [(c)] Due to the previous observation one may be tempted to get directly the $L^{p}$ estimate using the lifting metric. This is forbidden because of no control on the number of sheets of the local covering $\exp_{x} : \bb_{\pi/\sqrt{S}}(0) \to B_{\pi/\sqrt{S}}(x)$ as $x$ moves in $M$. For instance this problem happens when $M = \hh^{m}/_{\Gamma}$ with $\vol(M)<+\infty$, \cite{GMP-progress}.
\end{itemize}
\end{remark}

\begin{proof}[Outline of the Proof]
We follow closely \cite{Wa-ActaSinica}. Fix $p \geq 2$ (large enough) and let $u \in C^{2}( B_{12}(\bx))$ be a solution of $\Delta u = f$. We have to estimate the $L^{p}$ norm of $|\Hess(u)|$ on $B_{1}(\bx)$. The idea is to reduce the problem to estimating (on a larger ball) the distribution function $\w_{\FM(|\Hess(u)|^{2})}(x)$ of the maximal function $\FM(|\Hess(u)|^{2})(x)$ of the pointwise square norm  $|\Hess(u)|^{2}(x)$ of the Hessian of $u$.

Indeed, since $p \geq 2$, for any locally integrable function $g$ one has
\begin{equation}
\int_{B_{1}(\bx)} |g|^{p}  = \int_{B_{1}(\bx)} (g^{2})^{\frac{p}{2}}  \leq \int_{B_{1}(\bx)} \FM(|g|^{2})^{\frac{p}{2}} 
\end{equation}
and, for any fixed $\CN>1$, using the Lebesgue-Stieltjies integral, it holds
\begin{align}\label{outline-2}
  \int_{B_{1}(\bx)} \FM(g^{2})^{p/2}  &= p \int_{0}^{+\infty} t^{p-1} \w_{\FM(g^{2})}(t^{2}) \,\mathrm{d}t  \\ \nonumber
  &= p \sum_{k=0}^{+\infty} \int_{\CN^{k}}^{\CN^{k+1}} t^{p-1} \w_{\FM(g^{2})}(t^{2}) \mathrm{d}t + p\int_{0}^{1}  t^{p-1} \w_{\FM(g^{2})}(t^{2}) \mathrm{d}t \\ \nonumber
 &\leq p  \CN^{p-2}(\CN-1)\sum_{k=1}^{+\infty} \CN^{pk}  \w_{\FM(g^{2})}(\CN^{2k}) + p \,  \vol B_{1}(\bx) .\\ \nonumber
\end{align}
Now, suppose we have rescaled $u$ to $u / \l$ in such a way that
$ \vol(\{ \FM(|\Hess(u)|^{2}) > \CN^{2} \}) < \e  \vol B_{1}(\bx)$ where the stretching factor $\l>0$ is suitably chosen (depending on the local geometry and on $\|u\|_{L^{p}}+\| f \|_{L^{p}})$. Then, using a mixture of PDEs theory and  a version of the the Vitali covering Lemma, it is shown that, for a suitable choice of $\d>0$ depending on the local geometry and on $\e$, one has the following estimate:
\begin{align}
 \w_{\FM(|\Hess(u)|^{2})}(\CN^{2k}) &\leq \sum_{i=1}^{k} \e^{i} \w_{\FM(f^{2})}(\d^{2} (\CN^{2})^{k-i} ) + \e^{k} \w_{\FM(|\Hess(u)|^{2})} (1) \\ \nonumber
 &\leq  \sum_{i=1}^{k} \e^{i} \w_{\FM(f^{2})}(\d^{2} (\CN^{2})^{k-i} ) + \e^{k}  \vol B_{1}(\bx).
\end{align}
This is the hard analytic part of the proof. Summarizing we have obtained:
\begin{align}\label{outline-3}
 \int_{B_{1}(\bx)} |\Hess(u)|^{p} 
 &\leq \CC_{1} \sum_{k=0}^{+\infty} \CN^{kp} \sum_{i=1}^{k} \e^{i} \w_{\FM(f^{2})}(\d^{2} (\CN^{2})^{k-i} ) + \CC_{2}  \vol B_{1}(\bx) \\ \nonumber
 &= \CC_{1} \sum_{i=1}^{+\infty} \e^{i} \CN^{ip} \sum_{k=i}^{+\infty} \w_{\FM(f^{2})}(\d^{2} (\CN^{2})^{k-i} ) + \CC_{2}  \vol B_{1}(\bx)  \\ \nonumber
 &= \CC_{1} \sum_{i=1}^{+\infty} \e^{i} \CN^{ip} \sum_{k=0}^{+\infty} \w_{\FM(f^{2})}(\d^{2} \CN^{2k} ) + \CC_{2}  \vol B_{1}(\bx)\\ \nonumber
\end{align}
provided $\e>0$ is small enough that $\e\CN^{p}<1$  (so  to have the convergence of the geometric series). Here, we have set
\begin{equation}
 \CC_{1} \simeq \CN^{p-2}(\CN-1),\quad \CC_{2} \simeq \CN^{p-2}(\CN-1).
\end{equation}
On the other hand, using again the Stieltjies integral as in \eqref{outline-2}, we easily obtain the lower estimate
\begin{align}
 \int_{B_{1}(\bx)} \FM(f^{2})^{\frac{p}{2}}  &\geq \frac{p \d^{p}(\CN-1)}{\CN^{p}} \sum_{k=1}^{+\infty} \CN^{kp} \w_{\FM(f^{2})} ( \d^{2}\CN^{2k}).\nonumber
\end{align}
Inserting into \eqref{outline-3} this latter  gives
\begin{align}
 \int_{B_{1}(\bx)} |\Hess(u)|^{p}  &\lessapprox \frac{\CN^{2(p-1)}}{\d^{p}}  \int_{B_{1}(\bx)} \FM(f^{2})^{\frac{p}{2}} + \CN^{p-2}(\CN-1)  \vol B_{1}(\bx).
\end{align}
Whence, recalling the strong $q-q$ estimate of the maximal function with $q= \frac{p}{2}>1$, we conclude
\begin{equation}
 \int_{B_{1}(\bx)} |\Hess(u)|^{p}  \lessapprox \frac{\CN^{2(p-1)}}{\d^{p}} \int_{B_{6}(\bx)} |f|^{p} +\CN^{p-2}(\CN-1)  \vol B_{1}(\bx) . 
\end{equation}
After scaling back the function $u$  this latter writes  in the form of an $L^{p}$ Calder\'on-Zygmund inequality.
\end{proof}

\subsection{Getting rid of the gradient term}\label{section-interpolation}

Recall that, as a drawback of the  transplantation procedure, an $L^{p}$-norm of the gradient of the function appeared. The genuine $L^{p}$ control of the gradient in terms of the function and its Laplacian will be discussed in the last section of the paper; see Section \ref{section-gradient}. What we need here is just  the {\it the interpolation inequalities} collected in the next Lemma. As we shall see in a moment, it also has interesting ``abstract'' consequences.

\begin{lemma}\label{lemma-interpolation}
Let $(M,g)$ be an $m$-dimensional Riemannian manifold and let $1<p<+\infty$ be a fixed exponent.\smallskip

\noindent (A) Assume $2\leq p<\infty$. Then, there is a constant $C=C(p)>0$ such that for any $\varepsilon>0$, one has
\begin{align} \label{interpolation1} \tag{$\I _{p\geq 2}$}
\|  \nabla \vp \| _{L^{p}} \leq    \frac{C}{\varepsilon}\|  \vp \| _{L^{p}} +C\varepsilon\| 
\mathrm{Hess}\left(  \vp \right)  \| _{L^{p}},
\end{align}
for every $\vp \in C^{\infty}_{\mathrm{c}}(M)$.\smallskip

\noindent (B) Assume $1<p \leq 2$ and that $M$ is complete with possibly nonempty boundary $\partial M \not= 
\emptyset$. Then, there is a constant $C=C(p)>0$ such that for any $\varepsilon>0$, one has
\begin{align}\label{interpolation2} \tag{$\I _{p \leq 2}$}
\|  \nabla \vp \| _{L^{p}} \leq   2C \| \vp \|_{L^{p}}^{\frac 1 2} \| \Delta \vp \|_{L^{p}}^{\frac 1 2} \leq \frac{C}{\varepsilon}\|  \vp \| _{L^{p}} +C\varepsilon\| 
\Delta  \vp  \| _{L^{p}},
\end{align}
for every  $\vp \in C^{\infty}_{\mathrm{c}}(M)$.
\end{lemma}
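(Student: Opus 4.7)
The two cases of Lemma \ref{lemma-interpolation} call for distinct but parallel arguments, each driven by a single integration by parts with a carefully chosen test field and concluded by Hölder and Young.

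\textbf{Plan for Part (A), $p \geq 2$.} The natural test vector field is $X = |\nabla \vp|^{p-2}\nabla \vp$, regularized as $X_\delta = (|\nabla\vp|^2 + \delta)^{(p-2)/2}\nabla \vp$ to restore smoothness. First I will integrate by parts (no boundary contributions since $\vp \in C^\infty_c(M)$) to obtain
\[
\int_M |\nabla \vp|^p = -\int_M \vp \, \dive X,
\]
and compute
\[
\dive X = |\nabla\vp|^{p-2}\Delta \vp + (p-2)|\nabla\vp|^{p-4}\Hess(\vp)(\nabla\vp, \nabla \vp).
\]
Bounding the first summand by $\sqrt{m}\,|\nabla\vp|^{p-2}|\Hess(\vp)|$ (via $|\Delta \vp| \leq \sqrt{m}\,|\Hess(\vp)|$) and the second by $(p-2)|\nabla \vp|^{p-2}|\Hess(\vp)|$ (Cauchy--Schwarz on the bilinear form), Hölder's inequality with exponents $(p,\, p/(p-2),\, p)$ will give
\[
\|\nabla\vp\|_{L^p}^2 \leq C_p \|\vp\|_{L^p}\|\Hess(\vp)\|_{L^p}.
\]
Taking square roots and applying $\sqrt{ab} \leq \tfrac{a}{2\varepsilon} + \tfrac{\varepsilon b}{2}$ delivers \eqref{interpolation1}.

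\textbf{Plan for Part (B), $1 < p \leq 2$.} The Hessian is unavailable, so I will switch test functions to $|\vp|^{p-2}\vp$, regularized as $\psi_\delta = (|\vp|^2 + \delta)^{(p-2)/2}\vp$. A direct differentiation yields
\[
\nabla \psi_\delta = (|\vp|^2 + \delta)^{(p-4)/2}\bigl[(p-1)\vp^2 + \delta\bigr]\nabla \vp,
\]
so integration by parts against $\nabla \vp$ (again no boundary terms by compact support) followed by $\delta \to 0$ via dominated convergence will produce the clean identity
\[
(p-1)\int_M |\vp|^{p-2}|\nabla \vp|^2 = -\int_M |\vp|^{p-2}\vp \, \Delta \vp.
\]
Hölder on the right-hand side then gives $\int_M |\vp|^{p-2}|\nabla\vp|^2 \leq (p-1)^{-1}\|\vp\|_{L^p}^{p-1}\|\Delta \vp\|_{L^p}$. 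To recover $\|\nabla \vp\|_{L^p}$, I factor pointwise
\[
|\nabla \vp|^p = \bigl(|\vp|^{p-2}|\nabla\vp|^2\bigr)^{p/2}\cdot|\vp|^{p(2-p)/2}
\]
and apply Hölder with conjugate exponents $2/p$ and $2/(2-p)$ (both finite precisely because $1 < p \leq 2$). After algebraic simplification of the exponents, this will lead to the multiplicative bound $\|\nabla\vp\|_{L^p} \leq (p-1)^{-1/2}\|\vp\|_{L^p}^{1/2}\|\Delta\vp\|_{L^p}^{1/2}$, and one more Young inequality yields \eqref{interpolation2}.

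\textbf{Main obstacle.} The chief technical delicacy in both parts is rigorously justifying the integration by parts near the zero sets of $|\nabla\vp|$ (Part A, when $p-2$ is not an even integer) and of $\vp$ (Part B, where $|\vp|^{p-2}$ blows up for $p < 2$). The $\delta$-regularizations above are designed precisely so that dominated convergence applies; in Part (B) the factor $(p-1)\vp^2 + \delta$ emerging in $\nabla\psi_\delta$ is what tames the behaviour near $\{\vp = 0\}$. I do not expect curvature assumptions or manifold-specific tools to intervene, which is consistent with the lemma being stated on an arbitrary Riemannian manifold.
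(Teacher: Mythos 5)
Your argument for Part (A) is essentially the one the paper points to: the paper cites \cite{GP-AdvMath} and says the proof ``relies on integration by parts and on Young inequalities,'' and indeed the paper itself later uses the very identity $\| \nabla u \|_{L^{p}}^{p} \leq \| u\, \Delta_{p}u \|_{L^{1}}$ with $\Delta_{p} u = \div(|\nabla u|^{p-2}\nabla u)$ in its discussion around Proposition \ref{prop-gradient-Hessian}; your test field $X=|\nabla\vp|^{p-2}\nabla\vp$ and the subsequent three-factor H\"older are exactly that computation. One small remark: when you replace $|\Delta\vp|$ by $\sqrt{m}\,|\Hess(\vp)|$ you introduce a dimensional constant, so strictly speaking you obtain $C=C(m,p)$ rather than the $C=C(p)$ claimed in the statement; this is a harmless discrepancy in the lemma's bookkeeping, but worth flagging.

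Part (B) is where you genuinely depart from the paper. The paper does not prove $\eqref{interpolation2}$ directly; it invokes the multiplicative inequality of Coulhon--Duong \cite{CD-CPAM}, whose proof goes through heat-kernel estimates (analyticity of the semigroup on $L^{p}$ and the bound $\|\nabla e^{t\Delta}\|_{p\to p}\lesssim t^{-1/2}$), and it then handles $\partial M\neq\emptyset$ by passing to the complete Riemannian extension of \cite{PV-AnnaliPisa}. Your route is entirely elementary: testing $\Delta\vp$ against the regularized power $\psi_\delta=(|\vp|^2+\delta)^{(p-2)/2}\vp$, obtaining the Caccioppoli-type identity
\[
(p-1)\int_M |\vp|^{p-2}|\nabla\vp|^2 \;\le\; -\int_M |\vp|^{p-2}\vp\,\Delta\vp,
\]
and recovering $\|\nabla\vp\|_{L^p}$ by reverse H\"older with exponents $2/p$ and $2/(2-p)$. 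This is correct, and it buys you two things the cited approach does not give for free: an explicit constant $(p-1)^{-1/2}$, and independence from completeness of $M$ (heat-kernel methods, by contrast, use the semigroup globally). One small technical caveat in your write-up: ``dominated convergence'' on the left-hand side is slightly optimistic, because the term $\delta(|\vp|^2+\delta)^{(p-4)/2}|\nabla\vp|^2$ blows up pointwise on the regular part of $\{\vp=0\}$; but since it is nonnegative, Fatou applied to $(p-1)\vp^2(|\vp|^2+\delta)^{(p-4)/2}|\nabla\vp|^2$ together with the convergent right-hand side already yields the inequality you need, and incidentally proves a priori that $\int|\vp|^{p-2}|\nabla\vp|^2<\infty$. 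Finally, your claim of ``no boundary contributions since $\vp\in C^\infty_c(M)$'' is correct only under the convention that compactly supported functions vanish in a neighbourhood of $\partial M$; the paper's formulation allows $\supp\vp$ to meet $\partial M$, which is exactly why it routes through the extension theorem. Under the interior-support convention your argument makes that detour unnecessary.
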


The proof of \eqref{interpolation1} is elementary and relies on integration by parts and on Young inequalities, \cite{GP-AdvMath}. The multiplicative inequality \eqref{interpolation2}, instead, is a contribution of T. Coulhon and X.-T. Duong, \cite{CD-CPAM}, and it is obtained via heat-kernel estimates. Actually, in \cite{CD-CPAM} only the boundaryless case is considered. The presence of the boundary can be overcome using the complete Riemannian extension construced in \cite[Theorem A]{PV-AnnaliPisa}. In \cite{CD-CPAM} it is also observed that  \eqref{interpolation2} can be extended to the whole $L^{p}$-scale on complete manifolds that, essentially, have nonnegative Ricci curvature. We shall see in Section \ref{section-gradient} that a lower Ricci curvature bound is enough to extend \eqref{interpolation2} (but not in the form of a multiplicative inequality)  to the space $\tW^{2,p}(M)$ for every $1<p<+\infty$.\smallskip

By combining Lemma \ref{lemma-interpolation} with Lemma \ref{lemma-transplanted}, and using also the harmonic radius estimates of Theorem \ref{th-JK-A} we get the following conclusion.

\begin{corollary}\label{cor-transplanted-nogradient}
Let $(M,g)$ be an $m$-dimensional complete Riemannian manifold satisfying $\| \ric \|_{L^{\infty}}< R$ and $r_{\inj}(M) = \frak{i}>0$. Then,  there exists a radius $r_{H} = r_{H}(m,R,\frak{i})>0$ and a constant $C=C(m,r_{H},p)>0$ such that the following Calder\'on-Zygmund inequality
 \[
 \| 1_{B_{r}(x)} \Hess(u) \|_{L^{p}} \leq C \left\{ \| 1_{B_{2r}(x)} u \|_{L^{p}} + \| 1_{B_{2r}(x)} \Delta u \|_{L^{p}} \right\}
 \]
holds  on any ball $B_{r}(x)$ with $0< 2r <r_{H}$ and for every $u \in C^{\infty}(B_{r_{H}}(x))$.
\end{corollary}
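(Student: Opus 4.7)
The argument has three ingredients which I would assemble in order. First, I would use the hypotheses to produce a uniform positive lower bound for the $C^{1,\alpha}$-harmonic radius. Since $\|\ric\|_{L^{\infty}(M)} < R$ and $r_{\inj}(y)\geq \frak{i}$ for every $y\in M$, choosing any fixed $r_{0}\leq \frak{i}/2$ makes $\inf_{y\in B_{r_{0}}(x)} r_{\inj}(y)\geq \frak{i}/2$ uniformly in $x$. Hence case (b) of Theorem \ref{th-JK-A} applies at every point and gives a constant $H=H(m,R,\frak{i},\alpha)>0$ with $r_{\harm,C^{1,\alpha}}(x)\geq H$ for all $x\in M$. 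I will set $r_{H}:=H$ (shrinking it at the end by a fixed factor to accommodate the cut-off argument below).

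Next, I would apply Lemma~\ref{lemma-transplanted} in the $C^{1,\alpha}$-harmonic chart based at $x$: for any $0<2r<r_{H}$ and any $u\in C^{\infty}(B_{r_{H}}(x))$,
\[
\|1_{B_{r}(x)}\Hess(u)\|_{L^{p}}\leq C_{1}\Big\{\|1_{B_{2r}(x)}u\|_{L^{p}}+\|1_{B_{2r}(x)}\Delta u\|_{L^{p}}+\|1_{B_{2r}(x)}\nabla u\|_{L^{p}}\Big\},
\]
with $C_{1}=C_{1}(m,r_{H},p)>0$. The entire burden is now to eliminate the gradient term on the right.

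To do so, I would pick a smooth cut-off $\chi\in C^{\infty}_{c}(M)$ with $\chi\equiv 1$ on $B_{2r}(x)$, $\supp\chi\subset B_{r_{H}/2}(x)$, and $|\nabla\chi|,|\Hess\chi|$ bounded by constants depending only on $r_{H}$; then $v:=\chi u\in C^{\infty}_{c}(M)$ satisfies $\nabla v=\nabla u$ on $B_{2r}(x)$, so $\|1_{B_{2r}(x)}\nabla u\|_{L^{p}}\leq \|\nabla v\|_{L^{p}}$. For $1<p\leq 2$, Lemma~\ref{lemma-interpolation}(B) gives $\|\nabla v\|_{L^{p}}\leq (C/\varepsilon)\|v\|_{L^{p}}+C\varepsilon\|\Delta v\|_{L^{p}}$; expanding $\Delta v=\chi\Delta u+2\langle\nabla\chi,\nabla u\rangle+u\Delta\chi$ produces a gradient term localised to the annulus $\supp\nabla\chi\subset B_{r_{H}/2}(x)\setminus B_{2r}(x)$, which a standard iteration along a finite sequence of nested concentric balls $B_{2r}\subset B_{\rho_{1}}\subset\cdots\subset B_{\rho_{N}}\subset B_{r_{H}/2}$ reduces to a term that can be absorbed on the left after choosing $\varepsilon$ small. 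For $p\geq 2$, I would instead apply Lemma~\ref{lemma-interpolation}(A), which replaces $\Delta v$ by $\Hess v$; expanding $\Hess v$ and feeding back the local estimate of Step 2 on a slightly larger ball closes the same bootstrap. In either case the final inequality
\[
\|1_{B_{2r}(x)}\nabla u\|_{L^{p}}\leq C_{2}\Big\{\|1_{B_{r_{H}/2}(x)}u\|_{L^{p}}+\|1_{B_{r_{H}/2}(x)}\Delta u\|_{L^{p}}\Big\}
\]
holds with $C_{2}=C_{2}(m,r_{H},p)>0$, and substituting into the Lemma~\ref{lemma-transplanted} estimate (after redefining $r_{H}$ by the constant factor forced by the nested construction) yields the corollary.

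The main obstacle is the absorption step when $p\geq 2$: the interpolation reintroduces the Hessian, so one cannot simply pick $\varepsilon$ small and finish in one shot; rather one must run the interplay between \eqref{step2}-type estimates and $\I_{p\geq 2}$ on a finite, $p$-independent chain of nested balls, tracking how constants propagate so that the final dependence remains only on $m,r_{H},p$. This is the standard technical heart of any local-to-global Calder\'on--Zygmund passage, and once carried out the conclusion follows.
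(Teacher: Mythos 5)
Your three-step plan — uniform $C^{1,\alpha}$-harmonic radius from Theorem~\ref{th-JK-A}~(b), transplanted local estimate from Lemma~\ref{lemma-transplanted}, gradient absorption via Lemma~\ref{lemma-interpolation} applied to a cut-off — is precisely the combination the paper indicates, and you correctly single out the absorption step as the technical crux. One caveat on your bootstrap: a \emph{finite} chain of nested balls does not close the argument by itself, because the gradient term always re-escapes to a strictly larger ball; you need either the infinite geometric chain of the standard absorption lemma (Gilbarg--Trudinger, Lemma~9.17) or a weighted sup over concentric balls in the style of the Schauder interior estimates. More usefully, the whole bootstrap can be sidestepped. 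The Euclidean interior $L^{p}$ estimate the paper invokes (Gilbarg--Trudinger, Theorem~9.11) in fact controls the full $W^{2,p}$ norm, $\|Du\|_{L^{p}(\Omega_{1})} + \|D^{2}u\|_{L^{p}(\Omega_{1})} \leq C\{\|u\|_{L^{p}(\Omega_{2})} + \|Lu\|_{L^{p}(\Omega_{2})}\}$, not just the pure second derivatives. Transplanting this stronger form through the harmonic chart and using $\Hess(u)_{ij} = \partial^{2}_{ij}u - \Gamma^{k}_{ij}\partial_{k}u$ with $\|\Gamma\|_{L^{\infty}}$ bounded in the chart gives $|\Hess(u)| \leq |\partial^{2}u| + C|\partial u|$ pointwise, and both quantities on the right are already bounded by $\|1_{B_{2r}}u\|_{L^{p}} + \|1_{B_{2r}}\Delta u\|_{L^{p}}$, with no interpolation and no iteration. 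The gradient term in Lemma~\ref{lemma-transplanted} is, in effect, an artefact of transplanting only the Hessian component of the Euclidean estimate; keeping the first-order component on the Euclidean side dissolves the difficulty you went to such lengths to handle.
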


Let us now point out some abstract consequences of the  interpolation inequalities. The first application gives an indication on how much $L^{p}$-gradient estimates and Calder\'on-Zygmund inequalities are closely related. For a more extensive discussion see Section \ref{section-Lpgradient}.

\begin{corollary}[$L^{p}$-estimates of the gradient in $C^{\infty}_{c}$]\label{cor-grad-compactlysupp}
If, for some $1<p<+\infty$, \ref{CZ}(p) holds on a complete Riemannian manifold $(M,g)$ then
 \[
\|  \nabla \vp \|_{L^{p}} \leq  C \cdot \left\{  \| \vp \|_{L^{p}} + \|
\Delta  \vp  \|_{L^{p}}  \right\}, \, \forall \vp \in C^{\infty}_{c}(M).
 \]
 Moreover, if we restrict the range to $1<p \leq 2$, then the above estimate holds without assuming the validity of \ref{CZ}(p).
\end{corollary}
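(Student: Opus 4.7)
The plan is to deduce the gradient estimate by combining Lemma \ref{lemma-interpolation} (interpolation inequalities) with the assumed \ref{CZ}($p$), splitting the argument according to whether $p\geq 2$ or $1<p\leq 2$.

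First, suppose $2\leq p<+\infty$. I would start from the interpolation inequality \eqref{interpolation1}, which gives, for every $\vp\in C^\infty_c(M)$ and every $\varepsilon>0$,
\[
\|\nabla\vp\|_{L^p}\leq \frac{C_0}{\varepsilon}\|\vp\|_{L^p}+C_0\varepsilon\|\Hess(\vp)\|_{L^p}.
\]
I would then insert the hypothesis \ref{CZ}($p$) into the Hessian term, obtaining
\[
\|\nabla\vp\|_{L^p}\leq \frac{C_0}{\varepsilon}\|\vp\|_{L^p}+C_0 C\varepsilon\bigl(\|\vp\|_{L^p}+\|\Delta\vp\|_{L^p}\bigr).
\]
Fixing, say, $\varepsilon=1$ yields the desired inequality with a constant depending only on $p$ and on the Calder\'on--Zygmund constant $C$.

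For the second range $1<p\leq 2$, I would simply invoke \eqref{interpolation2}, which on a complete manifold gives
\[
\|\nabla\vp\|_{L^p}\leq \frac{C_0}{\varepsilon}\|\vp\|_{L^p}+C_0\varepsilon\|\Delta\vp\|_{L^p},
\]
for every $\vp\in C^\infty_c(M)$. Again, choosing $\varepsilon=1$ produces the claimed estimate, and it is crucial to note that \eqref{interpolation2} does not involve the Hessian of $\vp$ at all: this is why the validity of \ref{CZ}($p$) is not needed in this range.

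There is no genuine obstacle here: the entire content of the corollary is that the interpolation inequalities of Lemma \ref{lemma-interpolation} are strong enough to convert a Hessian control into a gradient control when $p\geq 2$, while for $1<p\leq 2$ the multiplicative Coulhon--Duong interpolation already contains the gradient estimate without any appeal to \ref{CZ}($p$). The only subtle point worth emphasizing in the write-up is the asymmetry between the two ranges, namely that the $p\geq 2$ implication is \emph{conditional} on \ref{CZ}($p$) whereas the $1<p\leq 2$ implication is \emph{unconditional} on complete manifolds, a reflection of the fact that heat-kernel-based interpolation bypasses the second-order information.
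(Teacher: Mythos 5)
Your proof is correct and is exactly the intended derivation: the paper states the corollary as an immediate abstract consequence of Lemma~\ref{lemma-interpolation}, and your splitting into the two ranges, using \eqref{interpolation1} combined with \ref{CZ}(p) for $p\geq 2$ and \eqref{interpolation2} alone for $1<p\leq 2$, is precisely the argument the paper has in mind.
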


As a second consequence we have that the validity of \eqref{CZ}(p) depends only on the geometry at infinity of the underlying manifold. A similar phenomenon appears also in the setting of Sobolev inequalities, \cite{Ca-Duke}.

\begin{corollary}[ends and connect sum]
Let $(M,g)$ be a complete, non-compact Riemannian manifold and let $1<p<+\infty$. Then
  \[
 M \text{ satisfies \ref{CZ}(p)} \Leftrightarrow \text{ the ends of }M \text{ satisfy \ref{CZ}(p)}
 \]
 In particular, given complete $m$-dimensional Riemannian manifolds $(M_{1},g_{1})$ and $(M_{2},g_{2})$
 \[
 M_{1},M_{2} \text{ satisfy \ref{CZ}(p) }\Rightarrow M_{1} \# M_{2}  \text{ satisfies \ref{CZ}(p)}.
 \]
\end{corollary}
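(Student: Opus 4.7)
The plan is to prove the equivalence ``$M$ satisfies \ref{CZ}(p) $\Leftrightarrow$ the ends of $M$ satisfy \ref{CZ}(p)'' by a cut-off and decomposition argument, and then to deduce the connect-sum statement from the fact that $M_{1} \# M_{2}$ is isometric to $M_{1}$ and $M_{2}$ outside a compact gluing region, hence shares their ends.

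The direction ``$\Rightarrow$'' is immediate: given $\vp \in C^{\infty}_{c}(E)$ for a component $E$ of $M\setminus K$ (with $K \Subset M$), extend $\vp$ by zero to $M$ and apply \ref{CZ}(p) to obtain the same inequality on $E$, with the same constant. For ``$\Leftarrow$'', fix $\vp \in C^{\infty}_{c}(M)$ and a compact set $K$ such that \ref{CZ}(p) holds on every component of $M\setminus K$ with a uniform constant. Choose nested compacts $K \Subset K_{1} \Subset K_{2}$ and a cutoff $\chi \in C^{\infty}_{c}(K_{2})$ with $\chi \equiv 1$ on a neighborhood of $K_{1}$, and write $\vp = \vp_{1} + \vp_{2}$ with $\vp_{1} = \chi \vp$ and $\vp_{2} = (1-\chi)\vp$.

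For $\vp_{2}$, supported in $M\setminus K_{1}$, apply \ref{CZ}(p) on each end and expand $\Delta \vp_{2} = (1-\chi)\Delta \vp - 2\langle \nabla \chi, \nabla \vp\rangle - (\Delta \chi)\vp$ to get
\[
\| \Hess(\vp_{2}) \|_{L^{p}} \leq C_{1} \bigl( \|\vp\|_{L^{p}} + \|\Delta \vp\|_{L^{p}} + \|\nabla \vp\|_{L^{p}(K_{2}\setminus K_{1})} \bigr),
\]
where $C_{1}$ depends on the constant on the ends and on $\|\chi\|_{C^{2}}$. For $\vp_{1}$, compactly supported in $K_{2}$, the positive continuous function $r_{\harm,C^{1,\alpha}}$ admits a lower bound $r_{H}>0$ on the compact set $K_{2}$, so finitely many balls of radius $r_{H}/2$ cover $K_{2}$; applying Lemma \ref{lemma-transplanted} on each such ball, summing with controlled intersection multiplicity, and expanding $\Hess(\chi\vp)$ and $\Delta(\chi \vp)$ via the product rule produces
\[
\| \Hess(\vp_{1}) \|_{L^{p}} \leq C_{2} \bigl( \|\vp\|_{L^{p}} + \|\Delta \vp\|_{L^{p}} + \|\nabla \vp\|_{L^{p}(K_{2})} \bigr).
\]
Combining the two bounds via the triangle inequality $\|\Hess(\vp)\|_{L^{p}} \leq \|\Hess(\vp_{1})\|_{L^{p}} + \|\Hess(\vp_{2})\|_{L^{p}}$ yields the desired estimate up to the gradient remainder term.

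The main obstacle is to absorb the local gradient term $\|\nabla \vp\|_{L^{p}(K_{2})} \leq \|\nabla \vp\|_{L^{p}}$ into $\|\vp\|_{L^{p}} + \|\Delta \vp\|_{L^{p}}$. This is precisely what Lemma \ref{lemma-interpolation} provides on $C^{\infty}_{c}(M)$: if $1<p \leq 2$, inequality \eqref{interpolation2} directly gives $\|\nabla \vp\|_{L^{p}} \leq C(\|\vp\|_{L^{p}} + \|\Delta \vp\|_{L^{p}})$; if $p \geq 2$, inequality \eqref{interpolation1} allows us to write $\|\nabla \vp\|_{L^{p}} \leq (C/\varepsilon)\|\vp\|_{L^{p}} + C\varepsilon \|\Hess(\vp)\|_{L^{p}}$, and for $\varepsilon$ sufficiently small the Hessian term is absorbed into the left-hand side. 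Either way, \ref{CZ}(p) on $M$ follows. The connect-sum statement is then immediate: each end of $M_{1} \# M_{2}$ is isometric, outside the gluing region, to an end of $M_{i}\setminus B_{i}$ for a small geodesic ball $B_{i}$, hence to an end of $M_{i}$; applying the ``$\Rightarrow$'' direction to $M_{1}$ and $M_{2}$ shows that the ends of $M_{1} \# M_{2}$ satisfy \ref{CZ}(p), and the ``$\Leftarrow$'' direction then delivers \ref{CZ}(p) on $M_{1} \# M_{2}$ itself.
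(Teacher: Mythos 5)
Your proof is correct, and it is the natural way to fill in the details of what the paper leaves implicit: the corollary is stated without proof as an abstract consequence of the interpolation inequalities (Lemma \ref{lemma-interpolation}), and your cut-off decomposition into a compactly supported core piece (handled by local harmonic-coordinate transplantation) plus end pieces (handled by the hypothesis), followed by absorption of the gradient remainder via \eqref{interpolation1} or \eqref{interpolation2}, is exactly the intended mechanism. The only minor imprecision is in the connect-sum step, where ``end of $M_i \setminus B_i$'' should be replaced by the observation that each end $E$ of $M_1 \# M_2$ is isometric to an open subset of some $M_i$, so that $\vp \in C^\infty_c(E)$ transfers to an element of $C^\infty_c(M_i)$ to which \ref{CZ}(p) on $M_i$ applies directly; this does not affect the validity of the argument.
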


\subsection{Conclusion: summing up the local inequalities}
One of the classical and powerful consequences of the relative volume comparison is that complete manifolds with uniform lower Ricci bounds are locally doubling spaces. This fact is crucial in many contexts starting from the celebrated Gromov pre-compactness theorem. Here, we need to record the following {\it finite multiplicity intersection property} of such spaces, see e.g. \cite{He-book}.
 \begin{lemma}\label{lemma-finitemultcovering}
 Let $(M,g)$ be a complete, $m$-dimensional Riemannian manifold. Assume that, for a radius $r_{0}>0$ there exists a (doubling) constant $D = D(r_{0})>0$ such that
 \[
 \vol B_{2r}(x) \leq D \, \vol B_{r}(x)
 \]
for every $x\in M$ and every $0<2r < r_{0}$. Then, there exists a number  $N = N(m,D,r_{0}) \in \nn$ with the following property:  having fixed $0< 2r <r_{0}$, there exists a sequence of balls $\{ B_{r}(x_{j}) \}$ that cover $M$ and such that every point of $M$ is contained in no more than $N$ balls of the family $\{ B_{2r}(x_{j})\}$.
\end{lemma}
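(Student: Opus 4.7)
The strategy is a classical Vitali-type greedy construction combined with a volume packing argument powered by the doubling hypothesis. To produce the centers, I would use Zorn's lemma to select a maximal $r$-separated subset $\{x_j\}_{j \in J} \subset M$, i.e., a family with $d(x_i, x_j) \geq r$ for $i \neq j$; since $M$ is separable, $J$ is at most countable, and the balls $B_{r/2}(x_j)$ are pairwise disjoint. By maximality, every $y \in M$ admits some $x_j$ with $d(y, x_j) < r$ (otherwise $\{x_j\}\cup\{y\}$ would still be $r$-separated), so $\{B_r(x_j)\}_{j \in J}$ covers $M$, giving the desired covering family.

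For the multiplicity bound, suppose $y \in B_{2r}(x_{j_1}) \cap \cdots \cap B_{2r}(x_{j_N})$. Then each center $x_{j_i}$ lies in $B_{2r}(y)$, so the pairwise disjoint balls $B_{r/2}(x_{j_i})$ all sit inside $B_{5r/2}(y)$, and by the triangle inequality $B_{5r/2}(y) \subset B_{9r/2}(x_{j_i})$ for every $i$. Let $i_0$ minimize $\vol B_{r/2}(x_{j_i})$. Summing the volumes of the disjoint smaller balls and invoking iterated doubling to bridge the radii yields
\begin{equation*}
N \cdot \vol B_{r/2}(x_{j_{i_0}}) \,\leq\, \sum_{i=1}^N \vol B_{r/2}(x_{j_i}) \,\leq\, \vol B_{5r/2}(y) \,\leq\, \vol B_{9r/2}(x_{j_{i_0}}) \,\leq\, D^{k}\, \vol B_{r/2}(x_{j_{i_0}}),
\end{equation*}
where $k = k(D, r_0)$ counts the doubling steps needed to pass from radius $r/2$ to radius $9r/2$. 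Dividing through by $\vol B_{r/2}(x_{j_{i_0}}) > 0$ then gives $N \leq D^{k} =: N(m, D, r_0)$, the uniform multiplicity bound.

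The main technical obstacle is ensuring that the iterated doubling in the last inequality remains inside the admissible scale range. The basic step $\vol B_{2s}(z) \leq D \vol B_s(z)$ requires $2s < r_0$, and bridging $r/2$ to $9r/2$ demands $\lceil \log_2 9 \rceil = 4$ doublings, which is comfortable when $r \leq r_0/16$ but needs care when $r$ approaches $r_0/2$. In that regime one can either perform the maximal packing at a slightly reduced scale $\tilde r = r/C$ with $C = C(r_0)$ chosen large enough that the relevant doublings fit (losing only a factor depending on $D$ and $r_0$ in the final count), or invoke the standard local-to-global consequence of local doubling recorded in \cite{He-book}, which bounds the cardinality of any $r$-separated set in a ball of radius $O(r_0)$ purely in terms of $D$ and $r_0$. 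Either route furnishes the claimed bound $N = N(m, D, r_0)$ and completes the proof.
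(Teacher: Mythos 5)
The paper itself does not prove this lemma; it simply cites Hebey's book \cite{He-book}, so there is no in-text argument to compare against. Your construction is the standard Vitali-type one: maximal $r$-separated net, disjointness of the $B_{r/2}(x_j)$, covering by $B_r(x_j)$ via maximality, and a volume-packing estimate for the multiplicity. That is the right approach, and you correctly diagnose the one delicate point, namely that the doubling hypothesis is only available for balls of radius below $r_0/2$, while the packing argument asks you to compare $\vol B_{r/2}$ with $\vol B_{9r/2}$, and $9r/2$ can be close to $\tfrac{9}{4}r_0$ when $r$ is near $r_0/2$.

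However, your first proposed fix does not actually work. If you select the net at a reduced scale $\tilde r = r/C$, the disjoint balls $B_{\tilde r/2}(x_{j_i})$ still have to be compared against a ball of radius about $4r + \tilde r/2 \geq 4r$, and that outer radius does not shrink with $\tilde r$ (only the inner one does). When $r \geq r_0/8$, you still need to bound the volume of a ball of radius $\geq r_0/2$ using doubling steps that are only hypothesized to hold below that threshold, regardless of how small $C^{-1}$ is; shrinking $\tilde r$ just forces more doubling steps, the last of which still exit the admissible range. Your second proposed route, the local-to-global consequence of local doubling on a length space (the chaining argument recorded in \cite{He-book} and \cite{SC-book}, which upgrades doubling for radii $< r_0/2$ to a comparison $\vol B_{\lambda s}(x) \leq C(D,\lambda)\,\vol B_s(x)$ for any fixed $\lambda>1$ and $s<r_0/2$), is the correct way to close the gap, and with that citation your proof is complete. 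I would simply delete the first suggestion and keep the second.
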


Thanks to this property we can sum up the local inequalities obtained in Theorem \ref{th-nontrasplanted} and Corollary \ref{cor-transplanted-nogradient} and get the following global result.
\begin{theorem}\label{th-localtoglobal}
 Let $(M,g)$ be complete Riemannian manifold of dimension $\dim M =m$ and let $p \in \rr$. Assume that either one of the following set of assumptions is satisfied:
 \begin{equation}\label{ricci+inj}
 \| \ric \|_{L^{\infty}} = R <+\infty,\quad r_{\inj}(M) = \frak{i} >0, \quad p \in (1,+\infty)
 \end{equation}
 or
 \begin{equation}\label{sectional}
\|  \sect \|_{L^{\infty}} = S <+\infty, \quad  p \in [2,+\infty) \cap (m/2,+\infty).
 \end{equation}
 Then, there exists a constant $C>0$ depending on $m,p$ and either on $R,\frak{i}$ or on $S$, such that \ref{CZ}(p) holds true, namely,
  \begin{equation}
 \| \Hess(\vp) \|_{L^{p}} \leq C \left\{ \| \vp \|_{L^{p}} + \| \Delta \vp \|_{L^{p}} \right\},\quad \forall \vp \in C^{\infty}_{c}(M).
 \end{equation}
\end{theorem}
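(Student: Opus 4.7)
The approach is to carry out the local-to-global program laid out in Section \ref{section-localtoglobal}: combine the local Calder\'on-Zygmund estimates of Sections \ref{section-localinequality1}--\ref{section-localinequality2} with the finite-multiplicity covering of Lemma \ref{lemma-finitemultcovering} and the interpolation inequalities of Lemma \ref{lemma-interpolation}.

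First I would secure a uniform local inequality. Under assumption \eqref{ricci+inj}, Theorem \ref{th-JK-A}(b), applied uniformly at every point with $r = \frak{i}/4$ and using $\inf_y r_{\inj}(y) \ge \frak{i}$, produces a constant $r_H = r_H(m, R, \frak{i}, \alpha) > 0$ such that $r_{\harm, C^{1,\alpha}}(x) \ge r_H$ for every $x \in M$. Choosing $0 < 2r < r_H$ and invoking Lemma \ref{lemma-transplanted} yields, for every $x \in M$ and $\varphi \in C^{\infty}_c(M)$,
\[
\| 1_{B_r(x)} \Hess \varphi \|_{L^p}^p \le C_0^p \bigl\{ \| 1_{B_{2r}(x)} \varphi \|_{L^p}^p + \| 1_{B_{2r}(x)} \Delta \varphi \|_{L^p}^p + \| 1_{B_{2r}(x)} \nabla \varphi \|_{L^p}^p \bigr\}
\]
with $C_0 = C_0(m, R, \frak{i}, p)$. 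Under assumption \eqref{sectional}, Theorem \ref{th-nontrasplanted} already supplies the same local inequality \emph{without} the gradient term, with $C_0 = C_0(m, S, p)$, once one verifies that the two-sided sectional bound implies both the local volume doubling hypothesis (via Bishop--Gromov applied to $\ric \ge -(m-1)S$) and the required curvature bound on a ball of fixed radius.

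Next I would sum over a controlled covering. In either regime, $\ric$ is uniformly bounded below, hence by volume comparison there is a uniform doubling constant $D$ on balls of radius at most $r_H$. Lemma \ref{lemma-finitemultcovering} then provides a countable family $\{B_r(x_j)\}$ covering $M$ with $\sum_j 1_{B_{2r}(x_j)} \le N$, where $N$ depends only on $m, D, r$. Summing the local $p$-th power inequalities, using $\sum_j 1_{B_r(x_j)} \ge 1_M$ on the left and $\sum_j 1_{B_{2r}(x_j)} \le N$ on the right, we obtain
\[
\| \Hess \varphi \|_{L^p}^p \le N C_0^p \bigl\{ \| \varphi \|_{L^p}^p + \| \Delta \varphi \|_{L^p}^p + \| \nabla \varphi \|_{L^p}^p \bigr\}
\]
under \eqref{ricci+inj}, and the same estimate without the $\|\nabla \varphi\|_{L^p}$ term under \eqref{sectional}; in the latter case the proof is already complete.

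Finally I would remove the gradient term in case \eqref{ricci+inj}. For $p \ge 2$, apply \eqref{interpolation1} with $\varepsilon$ so small that the resulting $\| \Hess \varphi \|_{L^p}$ contribution can be absorbed into the left-hand side. For $1 < p < 2$, apply the Coulhon--Duong multiplicative interpolation \eqref{interpolation2}, whose global validity relies precisely on the completeness of $M$; a standard Young inequality then controls $\| \nabla \varphi \|_{L^p}$ by $\| \varphi \|_{L^p}$ and $\| \Delta \varphi \|_{L^p}$ with the Hessian no longer appearing on the right. This yields \eqref{CZ}(p) with a constant depending only on $m$, $p$, and the stated geometric data. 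The only step that is not essentially bookkeeping is the subcritical case $1 < p < 2$ of the last paragraph, where one must appeal to the nontrivial heat-kernel-based interpolation of Coulhon--Duong rather than to a straightforward integration by parts; a secondary point of care is to confirm that the constants in the local step and in the summation truly depend only on $(m, R, \frak{i}, p)$, respectively $(m, S, p)$, and not on the choice of cover.
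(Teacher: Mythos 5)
Your proof is correct and follows essentially the same local-to-global strategy the paper uses: uniform $C^{1,\alpha}$-harmonic radius bound, transplanted (or directly proved) local Calder\'on-Zygmund estimate, finite-multiplicity covering, and interpolation to remove the gradient term. The one organizational difference is that you sum the local inequalities \emph{before} removing the gradient term, discarding it globally via Lemma \ref{lemma-interpolation} and an absorption argument, whereas the paper packages the gradient removal into the local step (Corollary \ref{cor-transplanted-nogradient}) and then sums a gradient-free local inequality; your ordering is if anything cleaner, since Lemma \ref{lemma-interpolation} is stated for globally compactly supported functions, so applying it after the summation is the more direct use of the lemma, and for $1<p<2$ the Coulhon--Duong multiplicative form controls the gradient outright without any absorption.
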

This  completes the discussion on the current status of the art on the {\it local-to-global} approach to prove global Calder\'on-Zygmund inequalities for real valued functions with compact support. As a matter of fact, as one may have already noted, the compactness of the support is used only in the interpolation inequalities whose validity, at the $C^{\infty}_{c}$-level, requires only the completeness of the manifold. On the other hand, it is true that in the bounded Ricci setting where the interpolation inequalities are applied, they hold for $L^{p}$ functions, as we shall see in Section \ref{section-gradient}.

\subsection{A word on manifold-valued maps}\label{section-maps}

According to \cite{GP-AGAG}, the {\it local-to-global} approach is robust enough to be adapted to manifold valued Lipschitz maps between complete manifolds with positive harmonic radii. What we get is a Calder\'on-Zygmund inequality for certain systems of PDEs. These inequalities contain a natural nonlinear term that, in some sense, encodes the non-flatness (= finiteness of the harmonic radius) of the target space. When the source manifold is compact and the map is isometric these inequalities extend, in a quantitative way, to general ambient manifolds and to the whole $L^{p}$-scale, what we saw in Section \ref{section-isometric-euclidean}.\smallskip

In order to state the main result of the section, we need to introduce some notation. Let $u : M \to N$ be a smooth map between Riemannian manifolds $(M,g)$ and $(N,h)$ of dimensions, respectively, $\dim M =m$, $\dim N =n$. For any given  point $\bx \in M$ we take coordinate charts $(U,\vp = (x^{1},\cdots,x^{m}))$ of $M$ centered at $\bx$ and $(V,\psi=(y^{1},\cdots,y^{n}))$ centered at $u(\bx)$ such that $u(U) \subseteq V$. We also fix the index convention $i,j,k,\cdots \in \{ 1,\cdots,m\}$ and $\a,\b,\g,\cdots \in \{ 1,\cdots,n\}$. Then, the {\it generalized Hessian} of $u$ is the tensor field along $u$ that, in local coordinates, writes as
\[
\Hess(u) = \Hess(u)^{\a}_{ij} \, dx^{i}\otimes dx^{j} \otimes  \left.\frac{\partial}{\partial y^{\a}}\right\vert_{u}\
\]
where
\begin{align*}
 \Hess(u)^{\a}_{ij} &= \partial^{2}_{ij} u^{\a} -\,^{M}\!\Gamma^{k}_{ij}\partial_{k} u^{\a} + \,^{N}\!\Gamma^{\a}_{\b \g} ( u)\, \partial_{i}  u^{\b}  \partial_{j}  u^{\g}\\
 &=  \,^{M}\!\Hess( u^{\a})_{ij}  + \,^{N}\!\Gamma^{\a}_{\b \g} ( u ) \, \partial_{i}  u^{\b}  \partial_{j}  u^{\g}.
\end{align*}
Tracing the generalizes Hessian gives rise to the {\it generalized Laplacian}, also called the {\it tension field}, of the map $u$. Again, in local coordinates,
\[
\Delta u = (\Delta u)^{\a} \left.\frac{\partial}{\partial y^{\a}}\right\vert_{u}
\]
where
\[
 (\Delta u)^{\a} = \Delta u^{\a} + g^{ij} \,^{N}\!\Gamma^{\a}_{\b \g} ( u ) \, \partial_{i}  u^{\b}  \partial_{j}  u^{\g}.
\]
Note that, in case $u: M \to N$ is an isometric immersion, then $\Hess(u)$ and $\Delta u$ are, respectively, the second fundamental form and the mean curvature vector field of the submanifold.\smallskip

Using tension fields instead of the ordinary Laplacian one is naturally led to study the corresponding Poisson equation
\[
\Delta u = G
\]
where $G = G^{\a} \left.\frac{\partial}{\partial y^{\a}}\right\vert_{u}$ is a smooth {\it vector field along the map $u$}.  If $G \equiv 0$ then $u$ is called a {\it harmonic map}.\smallskip

There is an endless literature on the existence and the regularity theory for harmonic maps. However, very little is known in the case of a nonzero tension field $G$. In fact, the only paper that we are aware of this  subject  is \cite{CJ-CAG} by W. Cheng and J. Jost. It investigates the existence of a solution of the Dirichlet problem in the homotopy class of a given boundary datum, and in the special setting of non-positively curved target spaces. Moreover, the first part of the same paper contains an $L^{2}$-Hessian inequality that looks pretty much like a (nonlinear) Calder\'on-Zygmund inequality for maps (in homotopy classes). Similarly to what happens in the real valued case, the study in the $L^{2}$ setting is helped very much by the use of the Bochner formula. \smallskip

The inequality we are going to state is different, it has a purely $C^{1,\alpha}$ dependence, it allows the source and target manifolds to be non-compact, and it works on the whole $L^{p}$-scale.
\begin{theorem}\label{th-CZmaps}
 Let $(M,g)$, $(N,h)$ be connected Riemannian manifolds and set $m:=\dim (M)$, $n:=\dim(N)$. Assume that $M$ is geodesically complete and that its $C^{1,\a}$-harmonic radius satisfies $r_{\harm, C^{1,\a}}(M) >0$.
  Assume also that $N$ is geodesically complete with $C^{1,\a}$-harmonic radius $r_{\harm, C^{1,\a}}(N) >0$. Then for every $1< p< +\infty$, there exists a constant $C=C(p,m,n,\a,r_{\euc}(M))>0$, which only depends on the indicated parameters, such that for all $0< \L \leq +\infty$, all $\L$-Lipschitz continuous maps $u \in C^{2}(M,N)$,  and any $o \in N$, one has 
\begin{align*} 
  C^{-1}\| \Hess ( u)\|_{L^{p}}  \leq    \| \Delta u\|_{L^{p}} + r^{-1}\|  d u\|_{L^{p}}+  r_{\harm, C^{1,\a}}(N)^{-1} \| d u \|_{L^{2p}}^2 +
r^{-2}\|    \dist_{N}(u , o) \|_{L^{p}},
 \end{align*} 
where we have set
\begin{equation}
r  =  \min \left(r_{\harm, C^{1,\a}}(M), \frac{r_{\harm, C^{1,\a}}(N)}{\max(\L,1)} , 1\right).
\end{equation}
\end{theorem}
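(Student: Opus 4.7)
I follow the local-to-global scheme of Section \ref{section-localtoglobal}, applied now to both source and target and componentwise on the map. Fix $\bar x\in M$ and choose a $C^{1,\a}$-harmonic chart $\phi$ around $\bar x$, of radius comparable to $r_{\harm,C^{1,\a}}(M)$. Because $u$ is $\L$-Lipschitz and the scale $r$ of the statement satisfies $\L r\lesssim r_{\harm,C^{1,\a}}(N)$, we can simultaneously take a $C^{1,\a}$-harmonic chart $\psi$ on $N$ of radius $r_{\harm,C^{1,\a}}(N)$ centered at $u(\bar x)$, so that $u(B_{2r}(\bar x))$ lies in its domain. Writing $u^{\a}:=\psi^{\a}\circ u$, the definitions of the generalized Hessian and of the tension field give the componentwise identities
\[
\Hess(u)^{\a}_{ij}=\Hess(u^{\a})_{ij}+{}^{N}\!\Gamma^{\a}_{\b\g}(u)\,\partial_{i}u^{\b}\partial_{j}u^{\g},\qquad (\Delta u)^{\a}=\Delta u^{\a}+g^{ij}\,{}^{N}\!\Gamma^{\a}_{\b\g}(u)\,\partial_{i}u^{\b}\partial_{j}u^{\g},
\]
and the $C^{1,\a}$-harmonicity of $\psi$ provides the pointwise bound $|{}^{N}\!\Gamma^{\a}_{\b\g}(u)|\leq C/r_{\harm,C^{1,\a}}(N)$.

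\textbf{Local estimate.} Apply to each $u^{\a}$ the transplanted Euclidean Calder\'on-Zygmund inequality, Lemma \ref{lemma-transplanted}, with the correct scaling factors $r^{-2}$ and $r^{-1}$ coming from a standard dilation argument:
\[
\|\Hess(u^{\a})\|_{L^{p}(B_{r}(\bar x))}\leq C\Big(r^{-2}\|u^{\a}-c\|_{L^{p}(B_{2r})}+\|\Delta u^{\a}\|_{L^{p}(B_{2r})}+r^{-1}\|\nabla u^{\a}\|_{L^{p}(B_{2r})}\Big),
\]
valid for any constant $c\in\rr$ since $\Hess,\Delta,\nabla$ kill constants. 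Plugging in the two identities, converting the Christoffel-generated quadratic term via H\"older as $\||du|^{2}\|_{L^{p}}=\|du\|_{L^{2p}}^{2}$, and selecting $c=\psi^{\a}(o)$ when $o$ belongs to the chart (in which case the bi-Lipschitz property of $\psi$ gives $|u^{\a}-c|\leq 2\dist_{N}(u,o)$), while taking $c=0$ otherwise (in which case the separation $\dist_{N}(u(\cdot),o)\gtrsim r_{\harm,C^{1,\a}}(N)$ throughout $B_{2r}(\bar x)$ combined with the Lipschitz bound $|u^{\a}|\leq 2\L r$ again controls the local coordinate norm by the intrinsic distance), one obtains the ball-level inequality
\[
\|\Hess(u)^{\a}\|_{L^{p}(B_{r}(\bar x))}\leq C\Big(\|(\Delta u)^{\a}\|_{L^{p}(B_{2r})}+r^{-1}\|du\|_{L^{p}(B_{2r})}+r_{\harm,C^{1,\a}}(N)^{-1}\|du\|_{L^{2p}(B_{2r})}^{2}+r^{-2}\|\dist_{N}(u,o)\|_{L^{p}(B_{2r})}\Big).
\]

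\textbf{Gluing and main obstacle.} The hypothesis $r_{\euc}(M)>0$ forces $g$ to be uniformly bi-Lipschitz to the Euclidean metric at scale $r$, hence locally doubling, so Lemma \ref{lemma-finitemultcovering} produces a cover $\{B_{r}(\bar x_{j})\}$ of $M$ whose doubles $\{B_{2r}(\bar x_{j})\}$ overlap with a uniform multiplicity $N_{0}=N_{0}(m,r_{\euc}(M))$. Raising the local inequality to the $p$-th power, summing over $j$ using this finite multiplicity (for the quadratic piece one uses $\sum_{j}\|du\|_{L^{2p}(B_{2r}(\bar x_{j}))}^{2p}=\sum_{j}\int_{B_{2r}(\bar x_{j})}|du|^{2p}\leq N_{0}\|du\|_{L^{2p}(M)}^{2p}$), and extracting $p$-th roots, one arrives at the claimed global estimate. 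The main difficulty and the genuinely new feature compared with the scalar case is the quadratic gradient term generated by the target's Christoffel symbols: it cannot be absorbed via interpolation and is the reason the $L^{2p}$-norm of $du$ enters the right-hand side. Its presence with coefficient $r_{\harm,C^{1,\a}}(N)^{-1}$, and the fact that the whole estimate is uniform, hinge essentially on the built-in choice of scale $r\leq r_{\harm,C^{1,\a}}(N)/\max(\L,1)$, which keeps the image of every source ball small enough for the bi-Lipschitz control of $\psi$ to be valid in the same way at every point.
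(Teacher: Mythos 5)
Your proposal follows the same local-to-global skeleton as the paper (and \cite{GP-AGAG}): simultaneous $C^{1,\a}$-harmonic charts on source and target, with the Lipschitz hypothesis and the definition of $r$ ensuring the image of a source ball stays within a target chart; the transplanted Euclidean CZ applied componentwise, with the Christoffel contributions to $\Hess(u)$ and $\Delta u$ estimated pointwise by $r_{\harm,C^{1,\a}}(N)^{-1}|du|^{2}$; and gluing via a finite-multiplicity covering at scale $r$, whose existence follows from the positive Euclidean (hence harmonic) radius of $M$. The one place where you take a genuinely different route is the treatment of the fixed target origin $o$. The paper's outline works locally with $\dist_{N}(u,u(\bar x))$ (the center of the target ball) and invokes a separate ``further observation'' to replace the moving local reference $u(\bar x)$ by the fixed $o$ while keeping the same form of the estimate; you instead insert $o$ directly at the local level by splitting on whether $o$ lies inside or outside the target chart, choosing the subtraction constant $c=\psi^{\a}(o)$ or $c=0$ accordingly. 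This is a valid alternative that trades the origin-switching step for a case analysis. One quantitative caveat to make explicit: with $r$ exactly as in the statement one only has $\L\cdot 2r\leq 2r_{\harm,C^{1,\a}}(N)$, so $u(B_{2r}(\bar x))$ may overshoot a chart of radius $r_{\harm,C^{1,\a}}(N)$; likewise, in your case~2 the separation $\dist_{N}(u(\cdot),o)\gtrsim r_{\harm,C^{1,\a}}(N)$ fails when $o$ lies just beyond the chart boundary unless the image ball is strictly smaller than the chart. Both issues disappear upon working at a strictly smaller scale (say source balls of radius $r/10$), a routine renormalization absorbed into $C$, but the sketch should say so to be complete.
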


The argument roughly goes as follows: Hessian and Laplacian have a 1st order dependence on the source and target metrics. If we can map balls within the harmonic radius into balls with the same property then we have a local estimate, up to using the center of the target ball as a reference origin to compute distances. This is where it is used that the map is Lipschitz. The actual quantitative dependence from the harmonic radii is a metter of scaling. Finally, we have to switch from the local estimate to the global one. This makes use of two ingredients: (a) a further observation on the fact that the reference origin can be fixed by keeping the same structure of the estimate; (b) the gluing procedure already described in the setting of real valued functions. Originally, we imposed a lower Ricci condition on the source manifold so to have a local doubling inequality that gives the existence of the  covering by balls with finite intersection multiplicity. However, this is not needed: the local doubling follows once we have a positive Euclidean radius and this is for free because it is even assumed that the $C^{1,\alpha}$-harmonic radius of $M$ is positive. I am grateful to Gilles Carron for this remark.  \smallskip

Note that, in case $\L=+\infty$ (i.e. no Lipschitz assumption is assumed on $u$) the statement of Theorem \ref{th-CZmaps} becomes nontrivial only if $r_{\harm,C^{1,\a}}(N)=+\infty$. Under this latter assumption, the presence of the ``nonlinear''  first order term  $\| d u \|_{L^{2p}}^2 $ disappears, and the Calder\'on-Zygmund inequality takes the classical form. However, having an infinite harmonic radius means precisely that $(N,h)$ is the standard Euclidean space. This is shown in the next result, whose proof follows from the very definition of $C^{1,\a}$-harmonic radius via an Ascoli-Arzelà argument; see e.g. \cite{GP-AGAG}.

\begin{proposition} \label{prop-rharm-euspace}
Let $(M,g)$ be a complete, non-compact, connected $m$-dimensional Riemannian manifold and assume that there exists some $o \in M$ and some $\alpha\in (0,1)$ such that $r_{\harm,C^{1,\a}}(o)= +\infty$. Then, $(M,g)$ is isometric to the Euclidean $\rr^m$.
\end{proposition}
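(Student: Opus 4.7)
The plan is to realize the global isometry $M \to \rr^m$ as a subsequential limit of the available harmonic charts at $o$ as their radii tend to $+\infty$. For each $R > 0$, the hypothesis $r_{\harm,C^{1,\a}}(o) = +\infty$ supplies a harmonic chart $\phi_R = (\phi_R^1,\dots,\phi_R^m):B_R(o) \to \rr^m$ satisfying (a), (b$_1$), (b$_2$), (c) of the definition; up to a translation of the target, we may normalize $\phi_R(o) = 0$. Condition (a) is the pointwise bi-Lipschitz estimate $\tfrac{1}{\sqrt 2}|v|_g \leq |d\phi_R(v)|_{\euc} \leq \sqrt 2\,|v|_g$ for $v \in T_pM$, so on any compact $K \subset M$ the family $\{\phi_R|_K\}_{R\,\text{large}}$ is uniformly bounded and $\sqrt 2$-Lipschitz. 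Since each $\phi_R^a$ is harmonic on the fixed smooth manifold $(M,g)$, interior Schauder estimates in an arbitrary background chart of $M$ upgrade these uniform $C^0$ bounds to uniform $C^{2,\beta}_{\mathrm{loc}}$ bounds for any $\beta<\a$. A diagonal Ascoli--Arzel\`a extraction then yields a subsequence $R_j \to +\infty$ and a smooth harmonic map $\phi:M\to\rr^m$ with $\phi(o)=0$ and $\phi_{R_j}\to\phi$ in $C^2_{\mathrm{loc}}$.

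The next step is to identify $\phi$ as a local isometry onto Euclidean $\rr^m$, up to a constant linear change in the target. Passing the pointwise bound in (a) to the limit, $d\phi(p)$ is invertible at every $p$, so $\phi$ is a local diffeomorphism. View $\tilde g^R := (\phi_R^{-1})^* g$ as a Riemannian metric on $\phi_R(B_R(o)) \subset \rr^m$; then (a) reads $\tfrac 12 \delta_{ij} \leq \tilde g^R_{ij} \leq 2\delta_{ij}$ and (b$_1$) reads $\|\partial_k \tilde g^R_{ij}\|_0 \leq 1/R$. On any Euclidean ball $\bb_S(0) \subset \rr^m$ and for $R_j \gg S$, this forces the oscillation of $\tilde g^{R_j}_{ij}$ to be $O(S/R_j)$, so along a further subsequence $\tilde g^{R_j}_{ij} \to G_{ij}$ in $C^0_{\mathrm{loc}}(\rr^m)$ for some \emph{constant} positive-definite symmetric matrix $G_{ij}$. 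Passing the identity $\phi_{R_j}^* \tilde g^{R_j} = g$ to the limit yields $\phi^*(G_{ij}dx^i dx^j) = g$ on $M$, and post-composing $\phi$ with the linear isomorphism of $\rr^m$ that carries the flat metric $G$ to $g_\euc$ produces a local isometry $\tilde\phi: (M,g) \to (\rr^m, g_\euc)$.

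Finally, $(M,g)$ being complete and $\tilde\phi$ being a local isometry into the simply connected complete manifold $(\rr^m, g_\euc)$, the classical theorem that a local isometry out of a complete Riemannian manifold is a Riemannian covering forces $\tilde\phi$ to be a covering, and simple connectedness of $\rr^m$ upgrades it to a global isometry; the stated non-compactness of $M$ is subsumed in the conclusion. The main difficulty in the plan is the compactness step of the first paragraph: one must extract a limit map $\phi$ with enough regularity both to make sense of the identity $\phi^* G = g$ and to preserve nondegeneracy of $d\phi$ in the limit. This is handled by the Schauder estimates for the harmonic coordinates; once that is in place, recognizing that the limit metric is constant (because its first derivatives decay like $1/R$) and globalizing a local isometry through completeness are standard steps.
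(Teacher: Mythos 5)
Your proof is correct and implements exactly the Ascoli--Arzel\`a scheme the paper alludes to (without writing it out) and attributes to \cite{GP-AGAG}: normalize the harmonic charts, use the uniform bi-Lipschitz bound from condition (a) together with interior elliptic estimates for the harmonic coordinate components to extract a $C^2_{\mathrm{loc}}$ limit, observe that the $1/R$ decay in (b$_1$) forces the pushed-forward metric coefficients to a constant positive-definite matrix, and globalize the resulting local isometry via completeness and the simple connectedness of $\rr^m$. All the supporting details you supply (the normalization $\phi_R(o)=0$, the $C^{1}_{\mathrm{loc}}$ convergence needed to pass $\phi_{R_j}^*\tilde g^{R_j}=g$ to the limit, the linear post-composition, and the covering-map argument) are sound.
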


It then follows that Theorem \ref{th-CZmaps} actually recovers, with a  quantitative dependence on the harmonic radius of the source, what we obtained in Theorem  \ref{th-localtoglobal} under assumption \eqref{ricci+inj}.\smallskip

To conclude the section let us specify Theorem \ref{th-CZmaps} to isometric immersions. The estimate we get, in the form it is stated, cannot be reduced to the Euclidean estimate via Nash embedding because, in general, the extrinsic geometry in Nash theorem cannot be controlled.\smallskip

\begin{corollary}
Let $(M,g)$ be compact and assume that the ambient manifold $(N,h)$ is complete, with $r_{\harm,C^{1,\a}}(N)>0$. Then, for every $1<p<+\infty$ and for every $R \geq 0$ such that $r_{\euc}(M) \geq R$, there exists a constant
\[
C=C\big(p,\dim(M),\dim(N), R\big)>0,
\]
which only depends on the indicated parameters, such that for every isometric immersion $\Psi:M\to N$ one has
\begin{equation}
C^{-1}\| \bA\|_{L^{p}}  \leq    \| \bH \|_{L^{p}} + \vol(M)^{1/p}\Big(r^{-1}+  r_{\harm,C^{1,\a}}(N)^{-1}  +
r^{-2}  \diam_{N}(\Psi (M))\Big)  ,
\end{equation}
where
\[
r  =  \min \left(r_{\harm,C^{1,\a}}(M), r_{\harm,C^{1,\a}}(N),1\right).
\]
\end{corollary}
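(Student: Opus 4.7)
The plan is to apply Theorem \ref{th-CZmaps} directly to $u=\Psi$ and to exploit both the compactness of $M$ and the isometric character of $\Psi$ to convert each of the three non-Laplacian terms on the right-hand side into the compact multiplicative form displayed in the corollary.

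First I would observe that an isometric immersion is $\L$-Lipschitz with $\L=1$, since $|d\Psi(v)|_{h}=|v|_{g}$ for every $v\in T_{x}M$. With $\L=1$ in Theorem \ref{th-CZmaps}, the auxiliary radius
\[
r=\min\!\left(r_{\harm,C^{1,\a}}(M),\,\frac{r_{\harm,C^{1,\a}}(N)}{\max(\L,1)},\,1\right)
\]
reduces to $r=\min(r_{\harm,C^{1,\a}}(M),\,r_{\harm,C^{1,\a}}(N),\,1)$, matching the statement. I would then invoke the local coordinate computations of Section \ref{section-isometric-euclidean} to identify the generalized Hessian with the second fundamental form, $\Hess(\Psi)=\bA$, and the tension field with the mean curvature vector, $\Delta\Psi=\bH$; the derivation there is purely local and does not use flatness of the target, so it carries over verbatim to a general $N$.

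The next step is to bound the three remaining right-hand terms of Theorem \ref{th-CZmaps} using the isometric condition. Choosing a local orthonormal frame on $M$ and using that isometric immersions send it to an orthonormal system in $N$, one sees that $|d\Psi|$ is identically $\sqrt{m}$ in the Hilbert--Schmidt norm, hence
\[
\|d\Psi\|_{L^{p}}\leq \sqrt{m}\,\vol(M)^{1/p},\qquad \|d\Psi\|_{L^{2p}}^{2}\leq m\,\vol(M)^{1/p}.
\]
For the distance term, I would choose the reference point $o\in N$ inside the image $\Psi(M)$, so that $\dist_{N}(\Psi(x),o)\leq \diam_{N}(\Psi(M))$ for every $x\in M$, which gives
\[
\|\dist_{N}(\Psi,o)\|_{L^{p}}\leq \diam_{N}(\Psi(M))\,\vol(M)^{1/p}.
\]
Substituting these three bounds into the inequality of Theorem \ref{th-CZmaps} and absorbing the dimensional factors $\sqrt{m}$ and $m$ into the overall constant produces exactly the stated inequality. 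Since the constant in Theorem \ref{th-CZmaps} depends on $p,m,n,\a$ and on $r_{\euc}(M)$, the hypothesis $r_{\euc}(M)\geq R$ allows us to replace that dependence by a monotone dependence on $R$ alone.

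The only mildly delicate point is to confirm that the Lipschitz constant $\L$ in Theorem \ref{th-CZmaps} is the operator-norm (pointwise) Lipschitz constant of $\Psi$ rather than the Hilbert--Schmidt norm of $d\Psi$: the former is the natural reading, since $\L$ is introduced as ``the'' Lipschitz constant of a continuous map. Under the Hilbert--Schmidt convention one would have $\L=\sqrt{m}$ and the radius $r$ would shrink by a factor of at most $\sqrt{m}$, but this modification can equally be absorbed into the final constant $C$ without affecting its claimed dependence on $p,\dim M,\dim N,R$.
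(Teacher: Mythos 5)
Your proof is correct and takes exactly the route the paper intends: specialize Theorem~\ref{th-CZmaps} to an isometric immersion (which is $1$-Lipschitz, so $r$ simplifies as stated), identify $\Hess(\Psi)=\bA$ and $\Delta\Psi=\bH$, use $|d\Psi|\equiv\sqrt{\dim M}$ and compactness to convert $\|d\Psi\|_{L^p}$, $\|d\Psi\|_{L^{2p}}^2$ into $\vol(M)^{1/p}$ up to dimensional factors, and pick $o\in\Psi(M)$ to bound the distance term by $\diam_N(\Psi(M))$. Your closing remark on the Lipschitz-constant convention is a fair caution, and as you note it is immaterial because either reading only adjusts the final constant by a factor depending on $\dim M$.
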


\section{$L^{p}$-Hessian estimates in $C^{\infty}_{c}$: the functional analytic approach}\label{section-functionalanalytic}

In Euclidean spaces there is (at least?) a second classical approach to prove the Calder\'on-Zygmund inequality  \eqref{intro2} for the Laplace operator. This viewpoint has a deep functional analytic flavour and relies on the $L^{p}$-boundedness of the so called {\it Riesz transform}; see \cite[Chapter III, Section 1.3]{Ste-Singular}.\smallskip

\subsection{From Euclidean space to manifolds}

Let $(M,g)$ be a complete Riemannian manifold of dimension $\dim M =m \geq 2$. Take the self-adjoint realization of the square root $(-\Delta)^{\frac{1}{2}} : C^{\infty}_{c}(M) \subset L^{2}(M) \to L^{2}(M)$ of the positive definite Laplace-Beltrami operator $-\Delta$; see \cite{Str-JFA}.
Using the very definition and integration by parts, we get
\[
\| d \vp \|_{L^{2}}^{2} = \int_{M} \vp  (-\Delta \vp) = \| (-\Delta)^{\frac{1}{2}} \vp \|_{L^{2}}^{2}.
\]
On noting that this chain of equalities can be re-written in the form
\[
\| d (-\Delta)^{-\frac{1}{2}} \vp \|_{L^{2}} = \| \vp \|_{L^{2}}
\]
one is led to consider the operator
\[
\R = d (-\Delta)^{-\frac{1}{2}} : C^{\infty}_{c}(M) \subset L^{2}(M)  \to L^{2}\Lambda^{1}(M)
\]
where $\Lambda^{1}(M)$ is the bundle of  $1$-forms endowed with the compatible metric  and connection inherited from $g$. The previous equality tells us that $\R$ extends to a bounded operator on $L^{2}(M)$.\smallskip

Suppose now that $(M,g)$ is the standard Euclidean space $(\rr^{m},g_{E})$. Then, it is customary to introduce $\R$ in the context of {\it singular integrals} via the kernel
\[
K(x) = -c_{m} |x|^{-m+1},
\]
where $c_{m}>0$ is a dimensional constant. Thus, up to identifying $1$-forms and vector fields,
\[
\R( \vp )(x) = \lim_{\e \to 0} \int_{|y|>\e} \nabla K(y) \vp(x-y) \, dy.
\]
In fact, this definition extends to functions in $L^{p}(\rr^{m})$ and it turns out that the corresponding operator is $L^{p}$-bounded. It follows that, having fixed $1\leq p < +\infty$,  there exists a constant $C= C(m,p)>0$ such that, for every $u \in L^{p}(\rr^{m})$,
\begin{equation}\label{LpRieszRm}
\| \R(u) \|_{L^{p}} \leq C \| u \|_{L^{p}}.
\end{equation}
\smallskip

Let $\R_{j}$ denote the $j^{th}$ component of $\R$, which is defined via $\frac{\partial K(y)}{\partial y^{j}} $. Then, on any $\vp \in C^{\infty}_{c}(\rr^{m})$, the following  crucial relation holds:
\begin{equation}\label{RieszHessianRm}
\frac{\partial^{2} \vp}{\partial x^{i} \partial x^{j}} = \R_{i}(\R_{j} (\Delta \vp)).
\end{equation}
Whence, using \eqref{LpRieszRm}, we immediately obtain the desired Euclidean Calder\'on-Zygmund inequality
\[
\left\| \frac{\partial^{2} \vp}{\partial x^{i}\partial x^{j}} \right\|_{L^{p}} \leq C^{2} \| \Delta \vp \|_{L^{p}} 
\]
for every $\vp \in C^{\infty}_{c}(\rr^{m})$. A natural question now arises:
\begin{problem}
 What we expect to survive in the general setting of complete Riemannian manifolds?
\end{problem}
There is a stream of deep works concerning the $L^{p}$-boundedness of the Riesz transform on functions, starting from the seminal papers by Strichartz, \cite{Str-JFA}, where the question on manifolds was first proposed, and by D. Bakry, \cite{Ba-SP}, where a first answer in terms of Ricci lower bounds was given. In particular, it is known that $\R$ is bounded on the whole $L^{p}$-scale, $1<p<+\infty$, if the Ricci curvature is nonnegative but intriguing (topological) obstructions for some values of $p$ appear as soon as we relax $\ric \geq 0$ to $\ric \geq -K$. In this case, it is still proved by Bakry in \cite{Ba-SP} that the $L^{p}$-boundedness can be obtained for the {\it shifted Riesz transform}\footnote{sometimes called also {\it local} Riesz transform.}  $(-\Delta + a)^{-\frac{1}{2}}$, with $a>0$. Thus, for instance, if $(M,g)$ is Ricci lower bounded and has a spectral gap $\lambda_{1}(-\Delta)>0$ (as in the Hyperbolic space), we  recover the validity of \eqref{LpRieszRm}.

On the other hand, it is clear that \eqref{RieszHessianRm} must be replaced by something defined co-variantly on the underlying manifold. In fact, recall that $\Hess(u) = Ddu$, where $D$ is the covariant derivative on $\Lambda^{1}(M)$ induced by the Levi-Civita connection of $M$. Therefore, in order to implement a version of \eqref{RieszHessianRm} that, in a way similar to the Euclidean setting, yields the validity of the Calder\'on-Zygmund inequality, we have to study {\it covariant versions of the Riesz transform}. This topic is already present in the literature. See for instance \cite{TW-POTA} which is very relevant for the next section, \cite{Lo-JFA} by N. Lohou\'e, and  also \cite{MMV} by G. Mauceri, S. Meda and M. Vallarino. However, apparently, many aspects still require a deeper understanding.\smallskip

To the best of our knowledge, the first paper where (covariant) Riesz transform techniques are used to get $L^{p}$-Calder\'on-Zygmund inequalities on Riemannian manifolds is \cite{GP-AdvMath}. Further investigations are announced in \cite{Ca-Revista}. We are going to briefly outline the main steps of the argument.

\subsection{\ref{CZ}(p), $1<p\leq 2$, via Riesz transform}
 
 Let $(M,g)$ be a complete, $m$-dimensional Riemannian manifold satisfying $\ric \geq -K$, $K >0$. We consider the operators
 \[
 d (-\Delta + K+1)^{-\frac{1}{2}} : L^{2}(M) \to L^{2}\Lambda^{1}(M),
 \]
and
 \[
D (\Delta_{1} + K+1)^{-\frac{1}{2}}: L^{2}\Lambda^{1}(M) \to L^{2}T^{2}_{0}(M),
 \]
 where $\Delta_{1} = d\delta + \delta d$ is the Hodge Laplacian on $1$-forms, $D$ denotes its covariant derivative and $T^{2}_{0}(M)$ is the vector bundle of $2$-covariant tensor fields. Since, obviously,
\[
-\Delta + K +1 = D^{\ast}D + K +1 \geq D^{\ast}D +1
\]
and, by the Weitzenb\"ock formula,
 \[
 \Delta_{1} + K +1 = D^{\ast} D + \ric + K +1 \geq D^{\ast} D +1,
 \]
we have that both these Riesz transforms are bounded in $L^{2}$ by $1$; see \cite[Lemma 4.17]{GP-AdvMath}.
\smallskip

Now, we want to prove that, for a suitable range of values of $p$, there exists a constant $C>0$ such that the inequality
\[ \tag{CZ(p)}
\| \Hess(\vp) \|_{L^{p}} \leq C \{ \| \Delta \vp\|_{L^{p}}+ \| \vp \|_{L^{p}} \}
\]
holds for every $\vp \in C^{\infty}_{c}(M)$. Clearly, it is enough to show that
\[
\| D d \vp \|_{L^{p}}  \leq C \| (-\Delta  + K+1) \vp\|_{L^{p}}, \text{ on }C^{\infty}_{c}(M).
\]
which, in turn, is equivalent to
 \[
 \|D d (- \Delta + K+1)^{-1} \vp \|_{L^{p}}  \leq C \| \vp \|_{L^{p}}.
 \]
 Using the spectral calculus we write
  \[
 (-\Delta + K+1)^{-1} = (-\Delta + K+1)^{-\frac{1}{2}}(-\Delta + K+1)^{-\frac{1}{2}}.
 \]
On the other hand, it can be deduced  from e.g. \cite[Remark B.12]{DT-JFA}, that the following commutation rule holds:
 \[
 d (-\Delta + K+1)^{-\frac{1}{2}} = (\Delta_{1} + K+1)^{-\frac{1}{2}}d.
 \]
Whence, we are reduced to prove that
\[
 \|D (\Delta_{1} + K+1)^{-\frac{1}{2}} d (-\Delta + K +1)^{\frac{1}{2}}\vp \|_{L^{p}}  \leq C \| \vp \|_{L^{p}}
 \]
and this is implied by the existence of some constants $C_{1},C_{2}>0$ such that
\begin{align}\label{Bakryestimate}
  \| d (-\Delta + K+1)^{-\frac{1}{2}} \vp \|_{L^{p}}  &\leq C_{1} \| \vp \|_{L^{p}} \\ \medskip 
  \| D (\Delta_{1} + K+1)^{-\frac{1}{2}} \w \|_{L^{p}}  &\leq C_{2} \| \w \|_{L^{p}}. \label{covariantestimate}
\end{align}
Since \eqref{Bakryestimate} holds by the fundamental work of Bakry, \cite{Ba-SP}, we have obtained the following abstract result.
\begin{proposition}\label{prop-covariantRiesz}
 Let $(M,g)$ be a complete, $m$-dimensional Riemannian manifold satisfying $\ric \geq -K$ for some $K>0$. Let also $1<p<+\infty$ be fixed. If the (shifted) covariant Riesz transform on $1$-forms $D (\Delta_{1} + K+1)^{-\frac{1}{2}}$ is bounded in $L^{p}$ then \ref{CZ}(p) holds.
\end{proposition}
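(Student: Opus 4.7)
The plan is to absorb the lower-order term by reducing the Calder\'on-Zygmund inequality to the shifted estimate
\[
\| Dd\vp \|_{L^{p}} \leq C \| (-\Delta + K+1)\vp \|_{L^{p}}, \qquad \vp \in C^{\infty}_{c}(M).
\]
Indeed, since $\| (-\Delta+K+1)\vp\|_{L^{p}} \leq \|\Delta \vp\|_{L^{p}}+(K+1)\|\vp\|_{L^{p}}$, this shifted inequality immediately implies \ref{CZ}(p). Because $-\Delta+K+1$ is self-adjoint and strictly positive, for every $\vp \in C^{\infty}_{c}(M)$ the element $\psi:=(-\Delta+K+1)\vp$ belongs to $L^{p}\cap L^{2}$, and $\vp = (-\Delta+K+1)^{-1}\psi$, so it suffices to prove
\[
\| Dd(-\Delta+K+1)^{-1}\psi \|_{L^{p}} \leq C \|\psi\|_{L^{p}}
\]
on a dense family of $\psi$'s, and then extend by density to all of $L^{p}$.

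Next I would factor the resolvent via the spectral calculus,
\[
(-\Delta+K+1)^{-1} = (-\Delta+K+1)^{-1/2}\circ (-\Delta+K+1)^{-1/2},
\]
and apply the commutation identity mentioned just above the statement,
\[
d\,(-\Delta+K+1)^{-1/2} = (\Delta_{1}+K+1)^{-1/2}\, d.
\]
Inserting this between the two half-powers yields the key factorization
\[
Dd(-\Delta+K+1)^{-1} \;=\; \bigl(D(\Delta_{1}+K+1)^{-1/2}\bigr)\circ \bigl(d(-\Delta+K+1)^{-1/2}\bigr).
\]
The right-hand factor is the shifted scalar Riesz transform, which is bounded in $L^{p}$ for every $1<p<+\infty$ under $\ric\geq -K$ by Bakry's theorem \cite{Ba-SP}; the left-hand factor is bounded in $L^{p}$ by assumption. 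Composing these two $L^{p}\to L^{p}$ bounds produces the desired constant $C$ and concludes the argument.

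The part I expect to require the most care is the rigorous justification of the commutation rule $d\,(-\Delta+K+1)^{-1/2} = (\Delta_{1}+K+1)^{-1/2}\,d$ as an identity of $L^{p}$-operators (as opposed to a formal identity on $C^{\infty}_{c}$). Formally it follows from $\Delta_{1}\circ d = d\circ(-\Delta)$ on smooth $0$-forms together with a functional calculus argument, but on a general complete manifold one must check that both sides really extend to the same bounded operator between the relevant $L^{p}$ spaces; this is exactly the content invoked from \cite[Remark B.12]{DT-JFA}, which I would quote as a black box. The remaining pieces, namely the density reduction and the composition of the two Riesz-type bounds, are then routine.
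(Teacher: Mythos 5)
Your argument matches the paper's proof essentially step for step: reduce to $\|Dd(-\Delta+K+1)^{-1}\psi\|_{L^p}\leq C\|\psi\|_{L^p}$, factor the resolvent as $(-\Delta+K+1)^{-1/2}(-\Delta+K+1)^{-1/2}$, slide $d$ past the half-power via the commutation identity from \cite[Remark B.12]{DT-JFA}, and compose Bakry's shifted scalar Riesz bound with the hypothesized covariant bound. Your extra care about the density reduction and the domain issues in the commutation rule is welcome but not a departure; in fact your version is slightly more explicit than the paper's (which has a sign typo on the exponent in the reduced display but the same content).
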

Thus everything boils down to detect suitable geometric restrictions on the Riemannian manifold  $(M,g)$ in such a way that the covariant Riesz transform is $L^{p}$. One of the most general results to our disposal, valid in the range $1<p \leq 2$, is \cite[Theorem 4.1]{TW-POTA} by A. Thalmaier and F.-Y. Wang.
Accordingly, we get the following theorem that, when compared with Theorem \ref{th-localtoglobal}, has the nice feature to work for small values of $p$ without any injectivity radius condition.
 \begin{theorem}\label{th-functanalytic}
  Let $(M^{m},g)$ be a complete, $m$-dimensional Riemannian manifold satisfying
\begin{itemize}
 \item [($a$)] $\| \riem \|_{L^{\infty}} + \|D \riem\|_{L^{\infty}}<+\infty$.
 \item [($b$)] $|B_{tr}(x)| \leq \g t^{\g}e^{t^{\d}+r^{\d}} |B_{r}(x)|$,  $\forall x\in M$, $\forall t \geq 1$, some $\g>0$ and $0 \leq \d <2$.
\end{itemize}
Then, for every $1< p \leq 2$, there exists a constant $C>0$ depending on $p, m$ and the geometric data such that \ref{CZ}(p) holds for every $\vp \in C^{\infty}_{c}(M)$.
\end{theorem}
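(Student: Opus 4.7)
The plan is to reduce the statement to Proposition \ref{prop-covariantRiesz} and then quote the covariant Riesz transform estimate of Thalmaier--Wang. First, observe that assumption $(a)$ implies in particular a uniform two-sided bound $|\ric|\leq K$ for some $K>0$, so the standing hypothesis of Proposition \ref{prop-covariantRiesz} is in force. Hence, the validity of \ref{CZ}$(p)$ on $C^{\infty}_{c}(M)$ for a given $1<p<+\infty$ is implied by the boundedness on $L^{p}\Lambda^{1}(M)$ of the shifted covariant Riesz transform
\[
D(\Delta_{1}+K+1)^{-\tfrac{1}{2}}\colon L^{p}\Lambda^{1}(M)\longrightarrow L^{p}T^{2}_{0}(M).
\]
Thus, the only thing to prove is this operator bound for $1<p\leq 2$ under the geometric restrictions $(a)$ and $(b)$.

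For this step I would invoke verbatim \cite[Theorem 4.1]{TW-POTA} by Thalmaier--Wang, which is precisely stated in terms of the same two ingredients as hypotheses: a uniform pointwise bound on the curvature tensor and its first covariant derivative (to control the Bochner--Weitzenb\"ock term and its derivatives along Brownian motion), together with a ``sub-quadratic exponential'' volume growth of the form $(b)$ needed to run the stochastic representation and Littlewood--Paley--Stein argument. Concretely, their approach starts from a Bismut-type formula for $D e^{-t(\Delta_{1}+K+1)}\w$, estimates the associated vertical martingale in $L^{p}$ for $p\in(1,2]$, and then synthesises the full resolvent $(\Delta_{1}+K+1)^{-1/2}$ by integrating $t^{-1/2}e^{-t(\Delta_{1}+K+1)}$ against $\mathrm{d}t$. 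The contribution of large $t$ is absorbed by the positive shift $K+1$ (which guarantees a spectral gap for the generator on $1$-forms via Weitzenb\"ock), while the short-time part is where assumption $(a)$ enters through derivative-of-curvature bounds and assumption $(b)$ through the underlying volume-doubling/Gaussian heat kernel control.

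Once \cite[Theorem 4.1]{TW-POTA} is applied, we obtain a constant $C_{2}=C_{2}(p,m,\text{geom. data})>0$ such that \eqref{covariantestimate} holds. Combined with Bakry's estimate \eqref{Bakryestimate} (available from $\ric\geq -K$ by \cite{Ba-SP}) and the commutation $d(-\Delta+K+1)^{-1/2}=(\Delta_{1}+K+1)^{-1/2}d$, we recover, as in the derivation preceding Proposition \ref{prop-covariantRiesz}, the inequality
\[
\|Dd\vp\|_{L^{p}}\leq C\,\|(-\Delta+K+1)\vp\|_{L^{p}}\leq C'\,\bigl\{\|\Delta\vp\|_{L^{p}}+\|\vp\|_{L^{p}}\bigr\},\qquad \forall\,\vp\in C^{\infty}_{c}(M),
\]
which is \ref{CZ}$(p)$.

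The main obstacle is really the verification, rather than the generation, of the hypotheses of Thalmaier--Wang: one must check that the precise pointwise curvature bounds in $(a)$ and the precise volume growth exponent $\delta<2$ in $(b)$ are exactly what is needed to make the probabilistic/heat kernel machinery run for every $p\in(1,2]$. Apart from this bookkeeping, no new analytical work is required. The restriction $p\leq 2$ is inherent to the martingale argument (it uses $L^{p}$-boundedness of the Burkholder--Gundy square function), and the present method offers no obvious route to the range $p>2$; to push past $p=2$ one would instead need the local-to-global techniques of Section \ref{section-localtoglobal}, which require injectivity radius control that is deliberately absent here.
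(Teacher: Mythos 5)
Your proposal is correct and follows exactly the route the paper takes: reduce to the covariant Riesz transform bound via Proposition \ref{prop-covariantRiesz} (noting that $(a)$ gives $\ric\geq -K$), then invoke Thalmaier--Wang \cite[Theorem 4.1]{TW-POTA} as a black box, whose hypotheses are precisely $(a)$ and $(b)$ and whose range is $1<p\leq 2$. The added commentary on the Bismut-formula/martingale mechanism and on why the method stops at $p=2$ is accurate background but not part of the paper's argument, which is genuinely a black-box citation.
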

\begin{remark}
Due to the restriction $\d<2$,  the {\it generalized volume doubling condition} (b) excludes hyperbolic geometries.
\end{remark}
Beside the final aspect of Theorem \ref{th-functanalytic}, that comes from a black-box application of Thalmaier-Wang result, what we think is really relevant in this Section is that, thanks to the connection with $L^{p}$-Calder\'on-Zygmund inequalities, we have a new perspective in the study of (both shifted and genuine) covariant Riesz transforms.

\newpage

\section{From $C^{\infty}_{c}$ to $L^{p}$: special cut-off functions}

In Sections \ref{section-localtoglobal} and \ref{section-functionalanalytic} we have encountered two different methods to prove the validity of a-priori $L^{p}$-Hessian estimates on $C^{\infty}_{c}$ functions. They involve various geometric conditions on the underlying complete manifold. Following the scheme we have outlined in the $L^{2}$ case, it is now the time to extend the results to functions in the space $C^{\infty}\cap\tW^{2,p}$. Namely, the goal is to prove that:\smallskip

\noindent {\it if we are given a solution $u \in C^{\infty}(M)$ of the Poisson equation $\Delta u = f$ with $u,f \in L^{p}(M)$ then $|\Hess(u)| \in L^{p}(M)$ and, in fact, $\| \Hess(u) \|_{L^{p}}\leq C\{ \| u \|_{L^{p}} + \| f \|_{L^{p}}\}$, for some constant $C>0$ independent of $u$.}\smallskip

\subsection{Density via second order cut-offs}
The most natural way to switch from $C^{\infty}_{c}$ to $L^{p}$ integral estimates is to use density arguments. These, in turn, take advantage from the existence of a special family of cut-offs. Actually, in the very special case of Calder\'on-Zygmund inequalities, one can also use a somewhat different argument, as explained in Section \ref{section-densitywithoutcutoff}. But the approach via cut-offs is so important, general and flexible in terms both of the inequalities involved and in the underlying geometric conditions, that deserve to be dealt with in detail.

In view of our purposes, following \cite{GP-AdvMath,IRV-IMRN}, we set the next
\begin{definition}\label{def-Hessiancutoff}
 Say that $\vp_{k} \in C^{\infty}_{c}(M)$ is a sequence of (weak\footnote{the term ``weak'' refers to the fact that we do not require $ \| \Hess(\vp_{k}) \|_{L^{\infty}}\to 0$. Whenever this decaying condition is satisfied we speak of (genuine) Hessian cut-off functions.})  Hessian cut-off functions if the following conditions are met:
\begin{itemize}
 \item [(a)] $\vp_{k} \to 1$, as $k \to +\infty$.\smallskip
 \item [(b)] $\| \nabla \vp_{k} \|_{L^{\infty}} \to 0$, as $k \to +\infty$. \smallskip
 \item [(c)] $\| \Hess (\vp_{k}) \|_{L^{\infty}} \leq C$, for some constant $C>0$.
\end{itemize}
\end{definition}

In general, on a complete Riemannian manifold, such a sequence does not exist. Indeed, for instance, its presence forces the Sobolev density $W^{2,p}(M) = \overline{C^{\infty}_{c}(M)}$ but, according to Veronelli example, \cite{Ve-Counterexample}, this is not always the case.\smallskip

Among all the possible cut-off functions one can construct, the most common in Geometric Analysis are those with radial symmetry. The reason is that their properties, in terms of control of the derivatives, can be read directly in the geometry (say curvature restrictions) of the space. Here is a suggestive example.

\begin{example}
Let $(M,g)$ be a complete, $m$-dimensional Riemannian manifold satisfying  $\| \sect \|_{L^{\infty}} <+\infty$. Assume also that $r_{\mathrm{inj}}(o)=+\infty$ for some fixed origin $o \in M$ (e.g. the pole of a model manifold $\mm^{m}_{\s})$. Set $d(x) = \dist(x,o)$. Then:
\begin{itemize}
 \item [(i)] $d$ is smooth on $M\setminus \{o\}$ and proper.
 \item [(ii)] $| \nabla d| = 1$ by the Gauss Lemma.
 \item [(iii)] $| \Hess (d) | \leq C$ on $M\setminus B_{1}(o)$ by the Hessian comparison theorem, \cite{PRS-book}.\smallskip
\end{itemize}
Clearly, the singularity of $d$ can be smoothed out without touching all its good properties. For instance, take $h : \rr_{\geq 0} \to \rr_{\geq 0}$ such that
\begin{itemize}
 \item [(j)] $h(s)$ is smooth (and even convex if we like).
 \item [(jj)] $h(s) = s^{2}$ for $0 \leq s \ll 1$ and  $h(s) =As+B$, $s \gg 1$,
\end{itemize}
and define $\tilde d: M \to \rr_{\geq 0}$ by
\[
\tilde d(x) = h(d(x)).
\]
Then, $\tilde d$ still satisfies (i)--(iii) on $M$. Finally, take any smooth function $\vp: \rr \to [0,1]$ s.t. $\vp(t) = 1$ on $|t|<1$ and $\vp(t) = 0$ on $|t| >2$ and, for every $k \in \nn$, let
\begin{equation}\label{hessiancutoff}
\vp_{k}(x) = \vp \left( \frac{\tilde d(x)}{k} \right).
\end{equation}
Then, $\{ \vp_{k}\}$ s a sequence of genuine Hessian cut-off functions.

In the same vain, one can relax the curvature condition to $|\sect|(x) \leq C (1 + d(x)^{2})$ and, correspondingly, replace (iii) by
\begin{itemize}
 \item [(iii')] $|\Hess (d) |(x) \leq C'd(x)$ on $M \setminus B_{1}(o)$.
\end{itemize}
This is enough to get, via \eqref{hessiancutoff}, the desired sequence of weak Hessian cut-offs.
\end{example} 
In the previous example, if we remove the injectivity radius condition,  things became much more complicated. Although the philosophy underlying the construction of Hessian cut-offs is the same, we now have to take care of the fact that $d(x)$ is only Lipschitz and the smoothing procedure requires extra nontrivial work. As far as we know, the problem was first considered in \cite{CG-JDG} by. J. Cheeger and M. Gromov where they assume that the sectional curvature is bounded and use the mollifiers technique\footnote{one of the deep insights of their proof is that, whenever you have a sectional curvature bound and are interested in $L^{\infty}$ estimates then, by lifting locally to the tangent space via the exponential map, no injectivity radius assumption is needed.}. The proof in  \cite{CG-JDG} is rather sketchy but, using a completely different argument, based on heat kernel methods, L.-F. Tam, \cite{Ta-ALM}, provided a complete proof. Actually, M. Rimoldi and G. Veronelli, \cite{RV-CalcVar}, observed  that Tam proof works as well by assuming that the Ricci tensor is bounded and the injectivity radius has a positive lower bound. Only very recently it was realized that {\it bounded geometry} is a too much strong restriction  as the following result by Impera-Rimoldi-Veronelli shows; \cite{IRV-IMRN}.

\begin{theorem}
 Let $(M,g)$ be a complete Riemannian manifold. Having fixed a reference origin $o \in M$, let $d(x) = \dist(x,o)$. If, for some $0 \leq \eta \leq 1$, either
\begin{itemize}
 \item [($a_{1}$)] $|\ric|(x) \leq C (1+d(x)^{2})^{\eta}$
 \item [($a_{2}$)] $r_{\mathrm{inj}}(x) \geq C (1+d(x))^{-\eta}$
\end{itemize}
or
\begin{itemize}
 \item [($b$)] $| \sect |(x) \leq C (1+d(x)^{2})^{\eta}$
\end{itemize}
for some constant $C>0$, then, there exists a (distance-like) function $\tilde d \in C^{\infty}(M)$ such that
\[
\| d - \tilde d \|_{L^{\infty}} < +\infty, \quad \| \nabla \tilde d \|_{L^{\infty}} < +\infty, \quad \left\| (1+d)^{-\eta}| \Hess(\tilde d) |\right\|_{L^{\infty}}< +\infty.
\]
\end{theorem}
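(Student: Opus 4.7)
The plan is to construct $\tilde d$ as a mollification of $d$ at a spatially varying scale $\epsilon(x) \sim (1+d(x))^{-\eta}$, which is precisely the natural local length-scale dictated by the assumed curvature growth. First I would cover $M$ by a sequence of overlapping annuli $A_j = B_{R_{j+1}}(o) \setminus B_{R_{j-1}}(o)$ with radii satisfying $R_{j+1} - R_j \asymp (1+R_j)^{-\eta}$, together with a subordinate smooth partition of unity $\{\chi_j\}$ obeying $\|\nabla \chi_j\|_\infty \lesssim (1+R_j)^\eta$ and $\|\Hess \chi_j\|_\infty \lesssim (1+R_j)^{2\eta}$. On each $A_j$ the curvature bound (or Anderson harmonic radius) is essentially constant at this natural scale.

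On each $A_j$ I would produce a smooth approximation $d_j$ of $d$ satisfying
\[
\|d_j - d\|_{L^\infty(A_j)} \lesssim (1+R_j)^{-\eta}, \quad \|\nabla d_j\|_{L^\infty(A_j)} \lesssim 1, \quad \|\Hess d_j\|_{L^\infty(A_j)} \lesssim (1+R_j)^{\eta}.
\]
Under (b), I would work in the pulled-back metric on a ball of $T_xM$ of radius $\gtrsim (1+R_j)^{-\eta}$, where $\exp_x$ is an immersion by Rauch's comparison theorem (no injectivity bound is needed because $L^\infty$ estimates transport through local isometries, in the spirit of Cheeger-Gromov), and mollify $d$ by convolution, controlling the Hessian via the Hessian comparison theorem outside the cut locus. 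Under (a), Anderson's theorem furnishes $C^{1,\alpha}$-harmonic coordinates on balls of radius $\gtrsim (1+R_j)^{-\eta}$, and I would mollify either by convolution in these coordinates or via the heat semigroup at time $t_j \sim (1+R_j)^{-2\eta}$, converting the $1$-Lipschitz control of $d$ into a Hessian bound through standard heat-kernel derivative estimates that are ultimately traceable to the Ricci lower bound.

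Finally I would set $\tilde d = \sum_j \chi_j d_j$. Differentiating twice and exploiting $\sum_j \chi_j \equiv 1$, which forces $\sum_j \nabla \chi_j \equiv 0$ and $\sum_j \Hess \chi_j \equiv 0$, one obtains the decomposition
\[
\Hess \tilde d = \sum_j \chi_j \Hess d_j + \sum_j (d_j - d)\, \Hess \chi_j + 2\sum_j \nabla \chi_j \odot \nabla(d_j - d),
\]
where $\odot$ denotes the symmetrised tensor product. Each of the three contributions is $\lesssim (1+d)^\eta$ on its support: the main term by the pointwise bound on $\Hess d_j$; the second term because $|d_j - d| \lesssim (1+R_j)^{-\eta}$ exactly compensates the $(1+R_j)^{2\eta}$ blow-up of $|\Hess \chi_j|$; the cross term because $|\nabla \chi_j|\lesssim (1+R_j)^\eta$ while $|\nabla(d_j - d)| \lesssim 1$. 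The bounds on $\|d - \tilde d\|_\infty$ and $\|\nabla \tilde d\|_\infty$ follow from the same telescoping computation at orders zero and one.

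The main obstacle is the pointwise Hessian bound on $d_j$ in case (a), since a Ricci lower bound alone only yields a Laplacian estimate on $d$, not a genuine Hessian estimate. This is where Anderson's harmonic radius is decisive: in harmonic coordinates the Christoffel symbols enjoy uniform $C^{0,\alpha}$ control after rescaling by $(1+R_j)^\eta$, so Euclidean second-order convolution estimates transfer to the Riemannian Hessian with the correct scaling, and the loss of one power of $\epsilon_j^{-1}$ in the second derivative of the mollifier is exactly the factor $(1+R_j)^\eta$ permitted by the statement.
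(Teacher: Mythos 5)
The survey you are reading does not prove this theorem; it cites \cite{IRV-IMRN}, whose argument is a heat-kernel construction in the line of Tam \cite{Ta-ALM} and Bianchi--Setti \cite{BS-CalcVar} (apply $e^{t\Delta}$ to a Lipschitz exhaustion and estimate derivatives of the semigroup via Bochner/stochastic techniques). Your plan instead revives, at variable scale, the Cheeger--Gromov mollification-and-gluing strategy that the paper itself attributes to \cite{CG-JDG} and describes as ``rather sketchy.'' The macro-architecture --- shrinking annuli at scale $(1+R_j)^{-\eta}$, a partition of unity with appropriately scaled derivative bounds, the decomposition
$\Hess\tilde d = \sum_j\chi_j\Hess d_j + \sum_j(d_j-d)\Hess\chi_j + 2\sum_j\nabla\chi_j\odot\nabla(d_j-d)$
exploiting $\sum\nabla\chi_j\equiv0$, $\sum\Hess\chi_j\equiv0$ --- is correct, the bookkeeping of powers of $(1+R_j)^\eta$ is exactly right, and the scaling check for Anderson's harmonic radius under $(a_1)$--$(a_2)$ works out: $r^2\|\ric\|\sim 1$ and $r/r_{\inj}\sim 1$ at scale $r\sim(1+d)^{-\eta}$.

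There are two real gaps. First, you use a partition of unity with $\|\Hess\chi_j\|\lesssim(1+R_j)^{2\eta}$, but constructing such a $\chi_j$ on a manifold requires a smooth radial proxy for the distance to the centre of the patch with controlled Hessian --- precisely the object you are trying to build. This is not circular if you build the bumps directly in harmonic (case a) or lifted-normal (case b) coordinates as functions of the coordinate norm, but you should say so; otherwise the construction of $\{\chi_j\}$ begs the question. Second, and more seriously, your case (b) recipe produces $d_j$ by mollifying on the ball $\bb_{r_c}(0)\subset T_xM$ with the pulled-back metric. That ball maps to $M$ by an immersion that is in general many-to-one, so the mollified function does \emph{not} descend to a function on $M$; there is nothing to insert into $\sum_j\chi_j d_j$. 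Cheeger--Gromov resolve this by a nontrivial weighted averaging over sheets; the Tam/IRV resolution is to mollify by the heat semigroup, which is canonically defined on $M$ and whose Hessian can still be estimated by lifting through $\exp_x$ (the point of the footnote in Remark \ref{rem-harmonicradius1}). You gesture at both fixes in a single clause (``in the spirit of Cheeger--Gromov \dots or via the heat semigroup'') but do not commit to either, and the convolution route you take as primary does not, as written, produce a function on $M$. Also note that the passing appeal to ``Hessian comparison outside the cut locus'' is not what delivers the bound $\|\Hess d_j\|\lesssim\epsilon_j^{-1}$; that bound comes from the derivative landing on the mollifier and the $1$-Lipschitz bound on $d$, together with the scaled $C^1$ control of the metric coefficients supplied by the harmonic/normal chart. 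Make that the stated mechanism and drop the comparison theorem, which cannot give a two-sided pointwise Hessian bound on $d$ itself across the cut locus.
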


Summarizing: in any of the sets of assumptions in Theorems \ref{th-localtoglobal} and \ref{th-functanalytic} we have the existence of a sequence of (genuine) Hessian cut-off functions $\{ \vp_{k}\}$. Therefore, if we are given a solution $u \in C^{\infty}(M)$ of the Poisson equation $\Delta u = f$, with $u,f \in L^{p}(M)$, evaluating \ref{CZ}(p) along $u \vp_{k}$ we get
\[
\| \vp_{k} \Hess(u) \|_{L^{p}}  \leq C \| u \vp_{k}\|_{L^{p}}  + C \| |\nabla \vp_{j}| |\nabla u|\|_{L^{p}}  + C \| \vp_{k} f\|_{L^{p}}  + C \| u \Hess(\vp_{k}) \|_{L^{p}} .
\]
and taking the limits as $k \to +\infty$ gives the desired estimate
\[
 \| \Hess(u) \|_{L^{p}}  \leq C \{ \|u\|_{L^{p}}  + \| f \|_{L^{p}} \}.
\]

\subsection{Density via Calder\'on-Zygmund}\label{section-densitywithoutcutoff} Let, again, $1 < p < +\infty$. If one is interested only in Calder\'on-Zygmund inequalities then there is a short-cut to get the extension from $C^{\infty}_{c}$ to $L^{p}$. In fact, we have observed in Proposition \ref{prop-density} that, if $(M,g)$ is a complete Riemannian manifold satisfying \ref{CZ}(p) on $C^{\infty}_{c}(M)$ then  the following chain of inequalities of Sobolev spaces holds true:
\[
W^{2,p}(M) = W^{2,p}_{0}(M) =\tW^{2,p}_{0}(M) = \tW^{2,p}(M).
\]
Thus, given $u \in C^{\infty}(M)$ satisfying $u \in L^{p}(M)$ and $\Delta u \in L^{p}(M)$, we have:
\begin{itemize}
 \item $|\nabla u | \in L^{p}(M)$, $|\Hess(u) | \in L^{p}(M)$, so that, $u \in C^{\infty}(M) \cap W^{2,p}(M)$, and
 \item there exists a sequence $\vp_{k} \in C^{\infty}_{c}(M)$ such that $\vp_{k} \to u$ in $W^{2,p}(M)$.
\end{itemize}
As a consequence, by applying \ref{CZ}(p) to $\vp_{k}$, gives
\[
\| \Hess(\vp_{k}) \|_{L^{p}} \leq C \{ \| \vp_{k} \|_{L^{p}} + \| \Delta \vp_{k} \|_{L^{p}} \}
\]
and by taking the limit as $k \to +\infty$ we conclude that this inequality extends to $u$ (with the same constant)
\[
\| \Hess(u) \|_{L^{p}} \leq C \{ \| u \|_{L^{p}} + \| \Delta u \|_{L^{p}} \}
\]
as desired.

\section{$L^{p}$-gradient estimates}\label{section-gradient}

In order to complete the picture on the global $W^{2,p}$-estimates for smooth solutions of the Poisson equation, we need to investigate the  validity of $L^{p}$-estimates of the gradient.

\subsection{The case $1<p<+\infty$}\label{section-Lpgradient}
We saw in Corollary \ref{cor-grad-compactlysupp} that the inequality
\begin{equation}\label{gradestcomp}
\|  \nabla \vp \|_{L^{p}} \leq  C \cdot \left\{  \| \vp \|_{L^{p}} + \|
\Delta  \vp  \|_{L^{p}}  \right\}, \, \forall \vp \in C^{\infty}_{c}(M)
\end{equation}
holds, for some universal constant $C>0$, in the following situations:
\begin{itemize}
 \item for $1<p \leq 2$ on any complete manifold, thanks to  the interpolation inequality \eqref{interpolation2};
 \item for $p \geq 2$ on a (possibly incomplete) manifold that supports \ref{CZ}(p), thanks to interpolation inequality \eqref{interpolation1}.
\end{itemize}
On the other hand, we have already mentioned the Milatovic density result, \cite[Appendix A]{GP-AMPA}, stating that,
\[
(M,g) \text{ complete } \Rightarrow \tW^{2,p}(M) = \overline{C^{\infty}_{c}(M)}^{\tW^{2,p}}
\]
in the range $1<p \leq 2$\footnote{remember that $p=2$ is Strichartz seminal observation, \cite{Str-JFA}.}. Whence, as we did in the $L^{2}$ setting, we immediately get from \eqref{gradestcomp} the following:

\begin{theorem}\label{th-gradient-milatovich}
 Let $(M,g)$ be a complete Riemannian manifold. Fix $1<p \leq 2$. Then, there exists a constant $C>0$ such that, if $u \in C^{\infty}(M)$ is a solution of the Poisson equation $\Delta u = f$ with $u,f \in L^{p}(M)$ then
 \[
 \|  \nabla u \|_{L^{p}} \leq  C \cdot \left\{  \| u \|_{L^{p}} + \| f  \|_{L^{p}}  \right\}.
 \]
\end{theorem}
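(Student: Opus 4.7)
The plan is to mimic, in the $L^p$-setting, the argument that was used in the $L^2$-case for Proposition \ref{prop-L2gradient}: combine the compactly supported interpolation inequality with a density result to pass to the limit. All the ingredients are already in the excerpt, so what remains is to assemble them carefully.

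First, I would invoke Milatovic's density statement (Theorem \ref{th-Milatovich}) in the range $1<p\leq 2$: since $u \in L^{p}(M)$ and $\Delta u = f \in L^{p}(M)$, we have $u \in \tW^{2,p}(M) = \tW^{2,p}_{0}(M)$, so there exists a sequence $\vp_{k} \in C^{\infty}_{c}(M)$ with
\[
\vp_{k} \xrightarrow{L^{p}} u, \qquad \Delta \vp_{k} \xrightarrow{L^{p}} f.
\]
Next, I would apply the interpolation inequality \eqref{interpolation2} to the differences $\vp_{k}-\vp_{j} \in C^{\infty}_{c}(M)$: for any fixed $\varepsilon>0$,
\[
\|\nabla(\vp_{k}-\vp_{j})\|_{L^{p}} \leq \frac{C}{\varepsilon}\|\vp_{k}-\vp_{j}\|_{L^{p}} + C\varepsilon\|\Delta(\vp_{k}-\vp_{j})\|_{L^{p}}.
\]
The right-hand side tends to $0$ as $k,j \to +\infty$, so $\{\nabla \vp_{k}\}$ is Cauchy in $L^{p}\Lambda^{1}(M)$ and converges to some $X \in L^{p}\Lambda^{1}(M)$.

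To identify $X$ with $\nabla u$, I would run a standard duality/distributional argument: for every compactly supported smooth vector field $Y$ on $M$,
\[
\int_{M} g(X,Y) = \lim_{k \to +\infty} \int_{M} g(\nabla \vp_{k},Y) = -\lim_{k \to +\infty} \int_{M} \vp_{k}\,\dive Y = -\int_{M} u \,\dive Y,
\]
so $X = \nabla u$ in the distributional sense, hence (by smoothness of $u$) $\nabla u \in L^{p}$ and $\nabla \vp_{k} \to \nabla u$ in $L^{p}$. Finally, I would apply \eqref{interpolation2} to each $\vp_{k}$,
\[
\|\nabla \vp_{k}\|_{L^{p}} \leq \frac{C}{\varepsilon}\|\vp_{k}\|_{L^{p}} + C\varepsilon\|\Delta \vp_{k}\|_{L^{p}},
\]
and pass to the limit $k \to +\infty$ (e.g.\ with $\varepsilon = 1$) to obtain the desired bound
\[
\|\nabla u\|_{L^{p}} \leq C\bigl(\|u\|_{L^{p}} + \|f\|_{L^{p}}\bigr).
\]
There is no serious obstacle here: the whole argument is a routine density-plus-interpolation bootstrap, and the only non-trivial input, namely $\tW^{2,p}(M)=\tW^{2,p}_{0}(M)$ for $1<p\leq 2$ on complete manifolds, has already been cited as Theorem \ref{th-Milatovich}. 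The only mild point of care is the weak-convergence identification of $X$ with $\nabla u$, which is why the restriction $p>1$ (allowing one to work with $L^{p}$ duality pairings against smooth compactly supported test objects) is essential.
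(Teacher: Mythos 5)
Your proposal is correct and follows essentially the same route as the paper: the paper combines the compactly supported gradient estimate (valid for $1<p\le 2$ by the interpolation inequality \eqref{interpolation2}) with Milatovic's density theorem $\tW^{2,p}_{0}(M)=\tW^{2,p}(M)$, and then extends by a Cauchy-plus-weak-limit-identification argument ``as in the $L^{2}$ setting,'' which is precisely the density/interpolation bootstrap you spell out.
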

The case $2<p<+\infty$ is more delicate and, apparently(!), requires some restriction on the geometry. To what extent the geometry must be controlled is not completely clear due the lack of concrete counterexamples. To the best of our knowledge, the most general result so far known is the following theorem by L. J. Cheng, A. Thalmaier and J. Thompson, \cite{CTT-JMAA}. Its  proof is based on stochastic calculus.

\begin{theorem}\label{th-CTT}
 Let $(M,g)$ be a complete, $m$-dimensional Riemannian manifold satisfying $\ric \geq - K^{2}$, for some $K \geq 0$. Fix any $1<p<+\infty$. Then, there exists a constant $C=C(m,p,K)>0$ such that the conclusion of Theorem \ref{th-gradient-milatovich} holds true.
\end{theorem}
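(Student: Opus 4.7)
My plan is to mirror the functional-analytic strategy used in Section~\ref{section-functionalanalytic} to get the Hessian estimate on $C^{\infty}_{c}$: combine Bakry's $L^{p}$-boundedness of the shifted Riesz transform on functions with a moment inequality for the fractional power $(-\Delta+\lambda)^{1/2}$, and then lift the resulting compactly supported inequality to $\tW^{2,p}(M)$ using Milatovic's density result, Theorem~\ref{th-Milatovich}.

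First I would fix $\lambda>0$ large enough (depending on $K$) that $A:=-\Delta+\lambda$ is strictly positive on $L^{2}(M)$ and extends to $L^{p}(M)$ as the generator of a bounded analytic semigroup for every $1<p<+\infty$; under $\ric\geq -K^{2}$ this follows from the contractivity of $e^{t\Delta}$ on each $L^{p}$. Then Bakry's theorem \cite{Ba-SP} produces a constant $C_{1}=C_{1}(m,p,K)>0$ such that
\[
\| dA^{-1/2}\varphi\|_{L^{p}} \leq C_{1}\|\varphi\|_{L^{p}}, \qquad \forall \varphi \in C_{c}^{\infty}(M).
\]
The positivity and sectoriality of $A$ on $L^{p}$ deliver a Heinz-Kato-type moment inequality
\[
\| A^{1/2}\varphi\|_{L^{p}} \leq C_{2}\|\varphi\|_{L^{p}}^{1/2}\|A\varphi\|_{L^{p}}^{1/2} \leq C_{2}\bigl\{\|\varphi\|_{L^{p}}+\|\Delta\varphi\|_{L^{p}}\bigr\},
\]
for all $\varphi\in C_{c}^{\infty}(M)$ and some $C_{2}=C_{2}(m,p,K)>0$. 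Composing the two bounds gives
\[
\|\nabla \varphi\|_{L^{p}} = \|dA^{-1/2}A^{1/2}\varphi\|_{L^{p}} \leq C_{1}C_{2}\bigl\{\|\varphi\|_{L^{p}}+\|\Delta\varphi\|_{L^{p}}\bigr\},
\]
which is the desired $L^{p}$-gradient estimate on $C_{c}^{\infty}(M)$.

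For the extension step, given $u\in C^{\infty}(M)\cap L^{p}(M)$ solving $\Delta u=f$ with $f\in L^{p}(M)$, Theorem~\ref{th-Milatovich} supplies a sequence $\varphi_{k}\in C_{c}^{\infty}(M)$ with $\varphi_{k}\to u$ and $\Delta\varphi_{k}\to f$ in $L^{p}$. Applying the compactly supported estimate to the differences $\varphi_{j}-\varphi_{k}$ shows that $\{\nabla\varphi_{k}\}$ is Cauchy in $L^{p}$; a weak-convergence argument (identical to the one used in the proof of Proposition~\ref{prop-L2gradient}) identifies its limit with the distributional $\nabla u$, and passing to the limit in the inequality evaluated at $\varphi_{k}$ closes the argument.

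The hard part is the moment inequality in Step~1, which demands a quantitative sectoriality estimate for $-\Delta+\lambda$ on $L^{p}$ with a constant depending only on $m$, $p$ and the lower Ricci bound. While this is folklore and can be extracted from the Bakry-Emery heat-kernel machinery, making the constants explicit is delicate. This is precisely where the Cheng-Thalmaier-Thompson proof substitutes a stochastic shortcut: a Bismut-type derivative formula $\nabla P_{t}f(x)=\mathbb{E}_{x}[f(X_{t})N_{t}]$, where $N_{t}$ is a martingale whose $L^{p}$-moments are controlled through $\ric\geq -K^{2}$ together with Burkholder-Davis-Gundy, gives pointwise-in-$t$ bounds on $\|\nabla e^{t\Delta}f\|_{L^{p}}$ that integrate in $t$ to yield the gradient estimate without ever invoking fractional powers or covariant Riesz transforms.
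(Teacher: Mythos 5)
Your argument reproduces the paper's own proof (attributed to Stefano Meda): Bakry's $L^{p}$-bound for the shifted Riesz transform $d(-\Delta+\lambda)^{-1/2}$, a moment inequality for fractional powers of a sectorial operator, composition of the two on $C_{c}^{\infty}(M)$, and then Milatovic's Theorem~\ref{th-Milatovich} to pass to $\tW^{2,p}(M)$. The only cosmetic difference is that you apply the moment inequality directly to $A=-\Delta+\lambda$, whereas the paper first compares $\|(-\Delta+K+1)^{1/2}\varphi\|_{L^{p}}$ with $\|(-\Delta)^{1/2}\varphi\|_{L^{p}}$ and then invokes the moment inequality for $-\Delta$; your version is marginally cleaner since $A$ is invertible, so this is the same proof.
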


We are going to provide a different (and somewhat direct) argument, based on Riesz transform estimates and deep facts in operator theory. It was suggested to me by Stefano Meda.

\begin{proof}
Let $u \in \tW^{2,p}(M)$, i.e., $u,\Delta u \in L^{p}(M)$. By Milatovich density result, Theorem \ref{th-Milatovich}, there exists an approximating sequence $\{ \vp_{j}\} \subset C^{\infty}_{c}(M)$ such that
\[
i) \, u = \lim_{L^{p}} \vp_{j}, \quad ii) \, \Delta u = \lim_{L^{p}} \Delta \vp_{j}.
\]
Now we recall that, under Ricci lower bounds, we have the Bakry $L^{p}$ estimate \eqref{Bakryestimate} of the shifted Riesz transform. On the other hand (see e.g. \cite{Ba-SP})
\[
\| (-\Delta + K+1)^{\frac{1}{2}}\vp_{j} \|_{L^{p}} \leq C \left\{ \sqrt{K+1} \| \vp_{j} \|_{L^{p}} + \| (-\Delta)^{\frac{1}{2}} \vp_{j} \|_{L^{p}} \right\}.
\]
Therefore
\[
\|  \nabla \vp_{j} \|_{L^{p}} \leq  C'  \left\{  \| \vp_{j} \|_{L^{p}} + \| (-\Delta)^{\frac{1}{2}}  \vp_{j}  \|_{L^{p}}  \right\}.
\]
It remains to take care of the last summand. Here, since the Laplace-Beltrami operator is {\it sectorial}, we can appeal to what is known in the literature as the {\it moment inequality}, see \cite[Proposition 6.6.4]{Haa} according to which
\[
\| (-\Delta )^{\frac{1}{2}}\vp_{j} \|_{L^{p}} \leq C''\,  \| \vp_{j} \|_{L^{p}}^{\frac{1}{2}} \,  \| \Delta \vp_{j} \|_{L^{p}}^{\frac{1}{2}}
\]
holds with some uniform constant $C''>0$. Summarizing,
\[
\|  \nabla \vp_{j} \|_{L^{p}} \leq  C''  \left\{  \| \vp_{j} \|_{L^{p}} + \| \Delta  \vp_{j}  \|_{L^{p}}  \right\}.
\]
for some constant $C= C(m,p,K)>0$. This estimate implies
\[
\nabla u = \lim_{L^{p}}\nabla \vp_{j}
\]
and, thus, it extends to $j \to +\infty$. The proof is completed.
\end{proof}

\begin{remark}
 Tracing back the dependence of the constant $C$ on the parameters, and precisely looking at the paper by Bakry, \cite{Ba-SP}, we see that $C$ depends linearly on the lower curvature bound $K$. This recovers (once optimized with respect to the parameter involved) what is obtained in \cite{CTT-JMAA} using stochastic methods.
\end{remark}

We conclude this section with some abstract considerations. As we have already remarked in the Introduction, gradient estimates are important in themselves. However, if our interest is merely in the Calder\'on-Zygmund theory, a funny phenomenon appears. Namely: for $2<p<+\infty$, and on a complete manifold $(M,g)$, the implication
\begin{equation}\label{gradient-vs-hessian}
\begin{cases}
\Delta u = f \\
u,f \in C^{\infty}(M) \cap L^{p}(M) \\ 
\end{cases}
\Rightarrow \, |\nabla u | \in L^{p}
\end{equation}
is just a formal consequence of the Calder\'on-Zygmund inequality. To see this, following \cite{GP-AMPA},  observe that, for any function $u \in C^{\infty}(M)$, the inequality
\[
\| \nabla u \|_{L^{p}}^{p} \leq \| u \Delta_{p}u \|_{L^{1}}
\]
can be obtained using integration by parts and $1^{st}$-order cut-off functions. Here we have adopted the notation $\Delta_{p} u = \div(|\nabla u|^{p-2}\nabla u)$  for the  {\it $p$-Laplacian} of $u$. Whence, unwinding  the definition of $\Delta_{p}$ and using Young and H\"older inequalities we obtain that, for a solution $u \in L^{p}$ of $\Delta u  = f \in L^{p}$,
\begin{equation}\label{interpolation-hessian}
 \| \nabla u \|_{L^{p}}^{2} \leq C \| u \|_{L^{p}} \left(  \| f \|_{L^{p}} + (p-2)\| \Hess(u) \|_{L^{p}} \right) <+\infty.
\end{equation}
The claimed implication \eqref{gradient-vs-hessian} now follows from \eqref{interpolation-hessian} provided that the $L^{p}$-Hessian estimate holds on $M$. We can summarize what we have seen in the following
\begin{proposition}\label{prop-gradient-Hessian}
 Let $(M,g)$ be a complete Riemannian manifold and let $2 \leq p <+\infty$. Assume that there exists a constant $C>0$ such that
 \[
 \| \Hess(u) \|_{L^{p}} \leq C\{ \| u \|_{L^{p}} + \|\Delta u \|_{L^{p}}\},\, \forall  u \in \tW^{2,p}(M).
 \]
Then, there exists a new constant $C'>0$ such that
 \begin{equation}\label{Lpgradient-Baptiste}
 \|  \nabla u \|_{L^{p}} \leq  C' \cdot \left\{  \| u \|_{L^{p}} + \| \Delta u \|_{L^{p}}  \right\}, \, \forall  u \in \tW^{2,p}(M).
 \end{equation}
\end{proposition}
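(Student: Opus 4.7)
The plan is three-fold: first derive an interpolation-type bound at the $C^{\infty}_{c}$ level using the $p$-Laplacian as a bookkeeping device, then upgrade it to $u \in \tW^{2,p}(M)$ by approximation, and finally absorb the Hessian term using the standing hypothesis.

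For the first step, given $\vp \in C^{\infty}_{c}(M)$, I would integrate by parts to obtain
\[
\|\nabla \vp\|_{L^p}^{p} = \int_{M} |\nabla \vp|^{p-2}\,g(\nabla \vp,\nabla \vp) = -\int_{M} \vp\,\Delta_{p}\vp,
\]
where $\Delta_{p}\vp := \dive(|\nabla \vp|^{p-2}\nabla \vp)$. A direct computation of the $p$-Laplacian yields the pointwise bound
\[
|\Delta_{p}\vp| \leq |\nabla \vp|^{p-2}\bigl(|\Delta \vp| + (p-2)|\Hess(\vp)|\bigr),
\]
and then H\"older's inequality with exponents $p$, $p$, $p/(p-2)$ applied to the three factors $|\vp|$, $|\Delta \vp|+(p-2)|\Hess(\vp)|$, $|\nabla \vp|^{p-2}$ would give, after dividing through by $\|\nabla \vp\|_{L^p}^{p-2}$,
\[
\|\nabla \vp\|_{L^p}^{2} \leq \|\vp\|_{L^p}\bigl(\|\Delta \vp\|_{L^p} + (p-2)\|\Hess(\vp)\|_{L^p}\bigr).
\]

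Next, for a general $u \in \tW^{2,p}(M)$, I need to promote this bound to $u$. Here the standing CZ hypothesis is indispensable: by Proposition \ref{prop-density} (which rests on Milatovic's Theorem \ref{th-Milatovich}, and hence on the completeness of $M$), the hypothesis forces the chain $\tW^{2,p}(M)=W^{2,p}_{0}(M)=W^{2,p}(M)$. In particular $u \in W^{2,p}(M)$ and one obtains a sequence $\{\vp_{k}\} \subset C^{\infty}_{c}(M)$ converging to $u$ simultaneously in the three norms $\|\cdot\|_{L^p}$, $\|\nabla\cdot\|_{L^p}$, $\|\Hess(\cdot)\|_{L^p}$ (so also $\Delta \vp_{k}\to \Delta u$ in $L^{p}$ by Cauchy--Schwarz). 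Writing the inequality along $\vp_{k}$ and passing to the limit preserves it, yielding
\[
\|\nabla u\|_{L^p}^{2} \leq \|u\|_{L^p}\bigl(\|\Delta u\|_{L^p} + (p-2)\|\Hess(u)\|_{L^p}\bigr).
\]

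Finally, plugging the assumed Hessian estimate $\|\Hess(u)\|_{L^p}\leq C(\|u\|_{L^p}+\|\Delta u\|_{L^p})$ into this gives $\|\nabla u\|_{L^p}^{2}\leq \tilde C\,\|u\|_{L^p}(\|u\|_{L^p}+\|\Delta u\|_{L^p})$, and the elementary inequality $\sqrt{ab}\leq (a+b)/2$ delivers the desired linear bound \eqref{Lpgradient-Baptiste}. The only delicate point, in my view, is the density step in the middle: the $C^{\infty}_{c}$-level inequality itself is a routine integration-by-parts calculation, but its transfer to all of $\tW^{2,p}$ is not automatic on a general complete manifold. It is precisely the CZ hypothesis, feeding into Proposition \ref{prop-density}, that supplies the simultaneous $W^{2,p}$-approximation without which the limiting argument would collapse.
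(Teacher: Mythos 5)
Your proof is correct, but it takes a genuinely different route from the paper's for the passage from test functions to $\tW^{2,p}(M)$. The paper obtains the interpolation bound $\|\nabla u\|_{L^p}^2\le\|u\|_{L^p}\bigl(\|\Delta u\|_{L^p}+(p-2)\|\Hess(u)\|_{L^p}\bigr)$ directly for non-compactly-supported $u\in C^{\infty}(M)$: one inserts a sequence of $1^{st}$-order cut-offs $\rho_k$ as in \eqref{cutoff1} into the integration by parts against $\Delta_p u$, keeps the extra term carrying $\|\nabla\rho_k\|_{L^\infty}\to 0$, and lets $k\to+\infty$. That argument needs only geodesic completeness plus the a-priori membership $|\Hess(u)|\in L^p$ (which the standing CZ hypothesis supplies), and in particular makes no appeal to Milatovic's Theorem~\ref{th-Milatovich} or to Proposition~\ref{prop-density}. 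You instead establish the inequality on $C^\infty_c(M)$ and transport it via the chain $\tW^{2,p}(M)=W^{2,p}_0(M)=W^{2,p}(M)$ from Proposition~\ref{prop-density}, so that the CZ hypothesis is doing double duty: once to furnish the simultaneous $W^{2,p}$-approximating sequence, and once more to absorb the Hessian term at the end. Both routes are valid; the paper's is the more elementary for the interpolation step (first-order cut-offs only), while yours makes the logical role of the CZ hypothesis — and the reason the result does not hold on a general complete manifold — more transparent. One small technical remark that applies to both presentations: for $p>2$ the vector field $|\nabla\vp|^{p-2}\nabla\vp$ is merely Lipschitz near the critical set of $\vp$, so the integration by parts should be justified, e.g.\ by regularizing with $(\ve^2+|\nabla\vp|^2)^{(p-2)/2}$ and letting $\ve\to 0^+$; this is routine but worth recording.
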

The moral of this abstract observation is that, if we are searching for an $L^{p}$-Hessian estimate on a complete Riemannian manifold, with $p \geq 2$, then, whatever the geometric restrictions on the manifold are, the validity of an $L^{p}$-gradient estimate is a necessary condition. This fact could be used to find new examples where the $L^{p}$-Hessian estimate does not hold. ``Simply'' construct a complete manifold where the $L^{p}$-gradient estimate is not satisfied.

A closely related and interesting question is how much the $L^{p}$ gradient estimates are subordinated to the validity of Calder\'on-Zygmund inequalities. In this respect, it was asked by Baptiste Devyver whether it is possible to construct a (smooth!) complete Riemannian manifold for which \eqref{Lpgradient-Baptiste} holds true but the corresponding \ref{CZ}(p) inequality (on compactly supported functions) does not hold.

\subsection{The endpoint case $p=+\infty$}
To complete the picture on the $L^{p}$-gradient estimates we have to discuss what happens in the (only admissible) endpoint case $p=+\infty$. This is not related to any Calder\'on-Zygmund theory but it is an important topic (even more important) that permeates the whole Geometric Analysis. The ``founding fathers'' in the context of manifolds with lower Ricci bounds\footnote{in the Euclidean space, gradient estimates for bounded solutions of the Poisson equation $\Delta u =f(u)$ were known to L. Modica, \cite{Mo-CPAM}.} are S.Y. Cheng and S.T. Yau, \cite{CY} with their ubiquitous {\it local gradient estimates} for positive harmonic functions. Accordingly if $B_{2R}(o)$ is a relatively compact ball in the $m$-dimensional Riemannian manifold $(M,g)$ and $\ric \geq -(m-1)K$ on $B_{2R}(o)$ for some constant $K \geq 0$, then every  harmonic function $u>0$ on $B_{2R}(o)$ satisfies
\[
\sup_{B_{R}(o)} | \nabla \log u |(x) \leq (m-1) \sqrt{K} + \frac{C}{R},
\]
where $C>0$ is a dimensional constant.  Actually, one can enlarge the class of equations to $\Delta u = a(x) g(u)$ and the Ricci tensor is also allowed to decay to $-\infty$ provided the asymptotic behaviour of $a(x)$ and the Ricci decay are suitably related to each others. This is a contribution of Bianchi-Setti, \cite{BS-CalcVar}. When the sign restriction is removed, gradient estimates for bounded solutions of $\Delta u = g(u)$ were previously obtained in \cite{RR-Tohoku} by A. Ratto and M. Rigoli. All these papers have in common the special structure of the datum $f$ in the Poisson equation $\Delta u = f$. But in order to merely get a gradient bound in terms of the solution itself, this restriction can be removed. The following result, stated for the drifted Laplacian and with upper integral bounds, can be found in \cite[Theorem 4.1]{ZZ-AdvMath} by Q. Zang and M. Zhu. See also P. Li book \cite{Li-book}.

\begin{theorem}
 Let $(M,g)$ be a complete $m$-dimensional Riemannian manifold satisfying $\ric \geq -K$, for some $K \geq 0$. Then, there exist a constant $C=C(m,K)>0$ and a radius $r_{0}=r_{0}(m,K)>0$ such that, for all balls $B_{r}(o)$ with $0<r\leq r_{0}$ and  $o \in M$, and for all solutions $u \in C^{\infty}(M)$ of
 \[
 \Delta u = f, \, \text{on }B_{r}(o)
 \]
it holds
\[
 \|\nabla u \|_{L^{\infty}(B_{r/2}(o))} \leq C \{ r^{-2} \| u \|_{L^{\infty}(B_{r}(o))} + \| f \|_{L^{\infty}(B_{r}(o))} \}.
\]
In particular, if the equation is satisfied on all of $M$, then we obtain the global $L^{\infty}$-gradient estimate
\[
 \|\nabla u \|_{L^{\infty}} \leq C \{ \| u \|_{L^{\infty}} + \| f \|_{L^{\infty}}\}.
\]
\end{theorem}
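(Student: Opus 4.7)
My plan is a Bernstein--Moser argument combining the Bochner formula with Moser iteration, with a single integration by parts to trade $\nabla f$ for $f$. Set $w=|\nabla u|^{2}$. The Bochner identity together with $\ric\geq -K$ and $m|\Hess(u)|^{2}\geq(\Delta u)^{2}=f^{2}$ yields
\[
\tfrac{1}{2}\Delta w \,\geq\, \tfrac{1}{m}f^{2} + g(\nabla f,\nabla u) - K w.
\]
The term $g(\nabla f,\nabla u)$ cannot be controlled pointwise, but against any non-negative test function $\psi$ the Poisson equation $\Delta u=f$ furnishes the key identity
\[
\int \psi\, g(\nabla f,\nabla u)\,\dv \,=\, -\int \psi\, f^{2}\,\dv \,-\, \int f\, g(\nabla \psi,\nabla u)\,\dv,
\]
exchanging $\nabla f$ for $f$ and $\nabla \psi$.

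Next I would fix concentric balls $B_{\rho}\subset B_{r}\subseteq B_{r_{0}}(o)$ and choose a cutoff $\phi\in C_{c}^{\infty}(B_{r})$ with $\phi\equiv 1$ on $B_{\rho}$, $|\nabla \phi|\leq C/(r-\rho)$, $|\Delta \phi|\leq C/(r-\rho)^{2}$; under $\ric\geq -K$ such cutoffs exist on small balls by a rescaled Schoen--Yau construction, provided $r_{0}=r_{0}(m,K)$ is small. Plug $\psi=\phi^{2}w^{q-1}$ ($q\geq 1$) into the integrated Bochner inequality, integrate by parts once more to handle $\int\phi^{2}w^{q-1}\Delta w$, absorb the resulting $\nabla w$-terms via Kato's refined inequality $|\nabla w|^{2}\leq 4w|\Hess(u)|^{2}$, and apply weighted Young. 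The outcome is a Caccioppoli-type bound
\[
\int_{B_{\rho}}\bigl|\nabla (\phi\, w^{q/2})\bigr|^{2}\,\dv \,\leq\, \frac{C(q)}{(r-\rho)^{2}}\int_{B_{r}}w^{q}\,\dv \,+\, C(q)\int_{B_{r}}\bigl(f^{2}+K w\bigr)w^{q-1}\,\dv,
\]
with $C(q)$ polynomial in $q$.

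The final step is Moser iteration. Under $\ric\geq -K$, Saloff-Coste's local $L^{2}$-Sobolev inequality on balls of radius $\leq r_{0}(m,K)$ has constants depending only on $m$, $K$, $r$. Feeding this into the Caccioppoli bound produces a reverse H\"older step that, iterated on a geometric sequence of radii $\rho_{k}\searrow r/2$ and exponents $q_{k}=\chi^{k}$, gives
\[
\|w\|_{L^{\infty}(B_{r/2})}\,\leq\, C(m,K)\Bigl(r^{-m}\|w\|_{L^{1}(B_{r})}+\|f\|_{L^{\infty}(B_{r})}^{2}\Bigr).
\]
The $L^{1}$ seed is obtained by testing $\Delta u=f$ against $u\phi^{2}$, yielding $\|w\|_{L^{1}(B_{r})}\leq C\,\vol(B_{r})\bigl(r^{-2}\|u\|_{L^{\infty}(B_r)}^{2}+\|u\|_{L^{\infty}(B_r)}\|f\|_{L^{\infty}(B_r)}\bigr)$. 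Taking square roots and using $\sqrt{ab}\leq (a+b)/2$ yields the claimed estimate, and the global bound follows at once by choosing $r=r_{0}(m,K)$.

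The main obstacle will be the bookkeeping of the Bochner--IBP step: the exchange of $\nabla f$ produces several cross terms that must be absorbed by $|\Hess(u)|^{2}\geq f^{2}/m$ and by the Kato term coming from $\Delta w$, and one must check that the resulting constants $C(q)$ are polynomial in $q$ so that the Moser iteration actually converges. A secondary technical subtlety is constructing the Laplacian cutoffs with the claimed scalings on small balls under a mere $\ric\geq -K$; this is where the restriction $r\leq r_{0}(m,K)$ genuinely enters, alongside the need for uniform local Sobolev constants.
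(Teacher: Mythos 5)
The paper does not prove this statement itself; it cites Zhang--Zhu \cite{ZZ-AdvMath} (Theorem~4.1) and remarks only that the argument there is a Moser iteration, which is exactly the route you propose, so you are on the same track. Your plan is sound; two small corrections to the bookkeeping. Your displayed pointwise inequality already trades $|\Hess(u)|^{2}$ for $f^{2}/m$, yet you later invoke the Kato bound $|\nabla w|^{2}\leq 4w|\Hess(u)|^{2}$, which needs $|\Hess(u)|^{2}$: keep $\tfrac12\Delta w\geq |\Hess(u)|^{2}+g(\nabla f,\nabla u)-Kw$ and extract $f^{2}/m$ or the Kato bound from the Hessian term as needed; in fact Kato is only truly essential at the seed level $q=1$, since for $q>1$ the good term $(q-1)\int\phi^{2}w^{q-2}|\nabla w|^{2}$ already appears after one integration by parts. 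Also, Schoen--Yau Laplacian cutoffs are not needed here: testing against $\psi=\phi^{2}w^{q-1}$ and integrating $\int\psi\Delta w$ by parts once produces only $\nabla\phi$, never $\Delta\phi$, so an ordinary Lipschitz cutoff with $|\nabla\phi|\lesssim(r-\rho)^{-1}$ suffices on any manifold. The hypothesis $\ric\geq -K$ enters instead through the uniform local Neumann--Sobolev inequality of Saloff-Coste and through local volume doubling, which drive the iteration and convert $\vol(B_{r})^{-1}\|w\|_{L^{1}(B_{r})}$ into the stated bound (with the usual Sobolev-exponent modification when $m=2$).
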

The proof relies on Moser iteration and the constant has a somewhat implicit dependence from the curvature bound. Using a completely different argument, everything can be quantify if we replace the Ricci assumption with a double-side control on the sectional curvature. In fact, we have the following result from \cite{GP-IMRN}.
\begin{theorem}
 Let $(M,g)$ be a complete Riemannian manifold, $m = \dim M$. Then there exists a dimensional constant $C=C(m)>0$ such that, for all $x \in M$ and $r>0$ such that the following holds.\smallskip
 
 \noindent If $u$ is a smooth solution of $\Delta u = f$ in  $B_{r}(x)$ then, for any $\e>0$,
 \[
 \| \nabla u \|_{L^{\infty}(B_{r/4}(x))} \leq \frac{C}{\min \left(1,r, (\| \sect \|_{L^{\infty}(B_{r/2}(x))}+ \e)^{-1/2} \right)}  \left\{ \| f \|_{L^{\infty}(B_{r/2}(x))} + \| u \|_{L^{\infty}(B_{r/2}(x))}\right\}.
 \]
\end{theorem}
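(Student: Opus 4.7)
The plan is to lift, at each point $y\in B_{r/4}(x)$, the equation to the tangent space via the exponential map, produce harmonic coordinates near the origin through a sectional-curvature-adapted scale, and finally invoke the classical Euclidean interior gradient estimate for uniformly elliptic equations with H\"older coefficients.

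Set $S:=\|\sect\|_{L^{\infty}(B_{r/2}(x))}$ and introduce the working scale
\[
\rho:=c_{m}\cdot\min\!\bigl(1,\,r,\,(S+\e)^{-1/2}\bigr),
\]
with a small dimensional constant $c_{m}>0$ chosen so that $2\rho\leq r/4$ and $2\rho<\pi/\sqrt{S+\e}$. For $y\in B_{r/4}(x)$, endow the Euclidean ball $\bb_{2\rho}(0)\subset T_yM$ with the pulled-back metric $\hat g:=\exp_y^{*}g$. Then $B_{2\rho}(y)\subseteq B_{r/2}(x)$ (hence $|\sect|\leq S$ there) and Rauch comparison forbids conjugate points in $\bb_{2\rho}(0)$; thus $(\bb_{2\rho}(0),\hat g)$ is a smooth Riemannian ball on which $\exp_y$ is a local isometry, the radial lines realize distance so that $B_{2\rho}^{\hat g}(0)=\bb_{2\rho}(0)$, $\hat r_{\inj}(0)=2\rho$, and the sectional curvature of $\hat g$ is absolutely bounded by $S$.

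Next, apply Theorem~\ref{th-JK-A}(a) to $(\bb_{2\rho}(0),\hat g)$ at the origin (with parameter $\rho$ playing the role of $r$ in the statement). The hypotheses are satisfied by the previous step, and a rescaling of the metric by $\rho^{-2}$ normalizes curvature and injectivity radius to unit order, producing a universal dimensional constant $H_{0}=H_{0}(m,\alpha)>0$ with $r_{\harm, C^{1,\alpha}}(0)\geq H:=H_{0}\,\rho$ after scaling back. In the associated harmonic chart, the components of $\hat g$ are $C^{1,\alpha}$-controlled and uniformly elliptic, and since the coordinates are harmonic $\Delta_{\hat g}=\hat g^{ij}\partial^{2}_{ij}$ has no first-order terms; hence the lifted solution $\hat u:=u\circ\exp_y$, read in these coordinates, satisfies a uniformly elliptic non-divergence equation $\hat g^{ij}\partial^{2}_{ij}\hat u=\hat f$ on a Euclidean ball of radius $H$.

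Standard interior gradient estimates for uniformly elliptic operators with H\"older coefficients (Gilbarg--Trudinger, rescaled by $H$ from the unit-scale estimate) then yield a dimensional constant $C_{0}=C_{0}(m,\alpha)>0$ with
\[
\|\nabla\hat u\|_{L^{\infty}(\bb_{H/2}(0))}\leq C_{0}\bigl(H^{-1}\|\hat u\|_{L^{\infty}(\bb_{H}(0))}+H\,\|\hat f\|_{L^{\infty}(\bb_{H}(0))}\bigr).
\]
Since $\exp_y$ is a local isometry and maps $\bb_H(0)$ inside $B_H(y)\subseteq B_{r/2}(x)$, we have $|\nabla u|(y)=|\nabla\hat u|(0)$ and the $L^\infty$ norms of $\hat u,\hat f$ are dominated by those of $u,f$ on $B_{r/2}(x)$. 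Using $H\leq H_{0}$ to absorb the $H\|\hat f\|_\infty$ term and $1/H=1/(H_{0}\rho)$, we arrive at
\[
|\nabla u|(y)\leq \frac{C(m,\alpha)}{\min(1,r,(S+\e)^{-1/2})}\bigl(\|u\|_{L^{\infty}(B_{r/2}(x))}+\|f\|_{L^{\infty}(B_{r/2}(x))}\bigr),
\]
and taking the supremum over $y\in B_{r/4}(x)$ concludes the proof. The main obstacle is the scale-covariant application of the Jost--Karcher harmonic radius estimate on the lifted ball: the key point, stressed in Remark~\ref{rem-harmonicradius1}, is that an absolute sectional curvature bound on the original ball provides, through the pull-back to the tangent space, both the conjugate-radius lower bound $\pi/\sqrt{S+\e}$ and a full injectivity radius at the origin after rescaling, so that the harmonic radius bound depends only on $m$ and $\alpha$, with no injectivity-radius assumption on $M$ itself.
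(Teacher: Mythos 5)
Your proof is correct and follows exactly the strategy that the survey itself indicates for this theorem (see Remark~\ref{rem-harmonicradius1} and the discussion in Section~\ref{section-localinequality2}, both of which emphasize lifting to the pull-back ball $(\bb_\rho(0),\exp_y^{\ast}g)$ precisely to trade the injectivity-radius hypothesis for a two-sided sectional bound, then invoking Jost--Karcher's $C^{1,\alpha}$-harmonic radius estimate and Euclidean interior regularity in the resulting harmonic chart). The scale-covariant choice $\rho=c_m\min(1,r,(S+\e)^{-1/2})$, the observation that $\hat r_{\inj}(0)=2\rho$ so that Theorem~\ref{th-JK-A}(a) applies with constants depending only on $m$ and $\alpha$ after normalizing by $\rho^{-2}$, and the transfer of $|\nabla u|(y)=|\nabla\hat u|(0)$ via the local isometry, are all the essential points and are handled correctly; fixing $\alpha$ once and for all makes the final constant dimensional.
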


\section{$L^{p}$-gradient estimates: counterexamples?}

As the question mark in the title of the Section suggests, the picture concerning  the failure of the $L^{p}$-estimates of the gradient (and consequently also of their validity) at the moment is only sketchy (to me, obviously).\smallskip

A first natural question concerns the range of values of $p$ where it is reasonable to consider these inequalities. According to Remark \ref{rem-cz-endpoint}, in the study of the validity of $L^{p}$-Hessian estimates on a complete Riemannian manifold  we have to exclude the endpoint cases $p=1$ and $p=+\infty$. Thus, if we are mainly interested in developing a Calder\'on-Zygmund theory, then the $L^{p}$-gradient estimates are just companion of the Hessian estimates, and it is natural as well to maintain the range $1<p<+\infty$. On the other hand, the interest in the gradient estimates is independent of any Calder\'on-Zygmund theory and permeates the Geometric Analysis. Since, from the very beginning of this survey, we have decided to separate the study of first and second order inequalities we still have to discuss what happens if $p=1$ or $p=+\infty$. Now, we shall see in Section \ref{section-gradient} that a lower Ricci bound is enough to get $L^{\infty}$ estimates and that, furthermore, we can improve the dependence of the constant on the geometry when we have a double sided control on the sectional curvature. The case $p=1$ looks drastically different. The corresponding estimate should fail even in the Euclidean space as one can try to prove using Hardy space theory; \cite{M}.
\\

In the rest of the section we shall restrict our attention to the range $1<p<+\infty$. At the present state of  understanding, and as we have discussed in Sections \ref{section-interpolation} and \ref{section-gradient}, in order to violate the $L^{p}$-gradient estimates we have the following few chances: (a) if $1<p \leq 2$, we have to use a geodesically incomplete manifold; (b) if $2< p <+\infty$ then: (b.1) we can  focus on manifolds that do not support either (\ref{CZ})(p) or a sequence of cut-off functions with controlled Hessian; (b.2) otherwise we can use a complete manifold with Ricci curvature unbounded from below. To be honest, concerning this second alternative, it is not at all clear to what extent a controlled decay at $-\infty$ is really forbidden.\smallskip

From the viewpoint of $L^{p}$ functions, the trivial situation is that represented by (a). For instance, the punctured Euclidean space $M = \rr^{m}\setminus \{ 0\}$, $m \geq 3$, endowed with its flat metric does not support an $L^{p}$-gradient estimate for every $\frac{m}{m-1}< p < \frac{m}{m-2}$.\smallskip

To see this, let us consider the Green function of $\rr^{m}$ with pole at $0$:
\[
G(x) = \frac{1}{r(x)^{m-2}}.
\]
Since $G$ is harmonic, the $L^{p}$-norm of its Laplacian is trivially finite. Moreover, since
\[
\int_{0^{+}} \frac{r^{m-1}}{r^{p(m-2)}} dr <+\infty,
\]
integrating in polar coordinates shows that the singularity of $G$ is $L^{p}$-integrable. On the other hand,
\[
|\nabla G(x)|^{p} = \frac{ (m-2)^{p}}{r(x)^{p(m-1)}}
\]
with
\[
\int_{0^{+}}\frac{r^{m-1}}{r^{p(m-1)}} dr = \int_{0^{+}} \frac{dr}{r^{(m-1)(p-1)}} = +\infty
\]
showing that the singularity of $|\nabla G|$ is not $L^{p}$-integrable. Thus, having fixed any cut-off function $\vp \in C^{\infty}_{c}(\rr^{m})$ satisfying $\vp = 1$ near $0$ we get that the function $u = \vp\, G \in C^{\infty}(M)$ violates the $L^{p}$-gradient estimate, as expected.

Note  that there is no hope to violate the $L^{p}$-gradient estimate at the $C^{\infty}_{c}$ level. Indeed, $M$ is an open set of $\rr^{m}$ where the $L^{p}$-gradient estimate on $C^{\infty}_{c}$-functions holds for any $1<p<+\infty$. Note also that the choice of the flat space $\rr^{m}$ is  inessential because, given a Riemannian manifold $(M,g)$, in a small neighborhood of a reference point $\bx \in M$, the metric has the polar expression $g = dr \otimes dr + (r^{2} \d_{ij}+ o(r^{2})) \theta^{i}\otimes\theta^{j}$ where $g_{\ss^{m-1}}=\sum \theta^{i} \otimes \theta^{i}$ is the standard metric of $\ss^{m-1}$. Moreover, near the singularity, the Green kernel with pole at $\bx$ and its gradient have exactly the same behaviour as the Euclidean one.

\section{Final result and an application}

Summarizing what we have seen in the previous sections, we have the following comprehensive (although far from being complete) picture.

\begin{theorem}[global $W^{2,p}$ regularity]\label{th-globalW2p}
Let $(M,g)$ be a complete, $m$-dimensional Riemannian manifold and let $p \in \rr$. Assume that one of the following sets  of assumptions ($\A$), ($\B$), ($\C$) or ($\D$) is satisfied:
\begin{itemize}
 \item [($\A_{1}$)] $\ric \geq -K$ for some $K \geq 0$;
 \item [($\A_{2}$)] $p=2$
\end{itemize}
or 
\begin{itemize}
 \item [($\B_{1}$)] $\|\ric \|_{L^{\infty}}<+\infty$;
 \item [($\B_{2}$)] $r_{\mathrm{inj}}(M)>0$;
 \item [($\B_{3})$] $p \in (1,+\infty)$;
\end{itemize}
or
\begin{itemize}
 \item [($\C_{1}$)] $\| \riem \|_{L^{\infty}} + \|D \riem\|_{L^{\infty}}<+\infty$;
 \item [($\C_{2}$)] $|B_{tr}(x)| \leq \g t^{\g}e^{t^{\d}+r^{\d}} |B_{r}(x)|$,  $\forall x\in M$, $\forall t \geq 1$, some $\g>0$ and $0 \leq \d <2$;
 \item [$(\C_{3})$] $p \in (1,2]$;
\end{itemize}
or
\begin{itemize}
 \item [($\D_{1}$)] $\| \riem \|_{L^{\infty}} <+\infty$;
 \item [($\D_{2}$)]  $p \in [2,+\infty) \cap (m/2,+\infty)$.
\end{itemize}
Then, there exists a constant $C = C(m,p,\mathrm{Geom(M)})>0$ such that the following holds.\smallskip

\noindent Let $u \in C^{\infty}(M)$ be a solution of the Poisson equation:
\[
\Delta u = f, \quad \text{on }M.
\]
If $u, f \in L^{p}(M)$ then $u \in W^{2,p}(M)$ and
\begin{equation}\label{W2pestimate}
\| \nabla u \|_{L^{p}} + \| \Hess(u) \|_{L^{p}} \leq C \left\{ \| u \|_{L^{p}} +  \| f \|_{L^{p}} \right\}.
\end{equation}
\end{theorem}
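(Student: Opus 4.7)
The plan is to verify, under each of the four alternative hypothesis sets $(\A)$--$(\D)$, three ingredients: (i) the a priori Calder\'on-Zygmund inequality \ref{CZ}(p) on $C^{\infty}_{c}(M)$; (ii) a density/approximation argument upgrading (i) from $C^{\infty}_{c}(M)$ to $\tW^{2,p}(M)$; and (iii) a companion $L^{p}$-gradient estimate for functions in $\tW^{2,p}(M)$. Summing the Hessian and gradient bounds then delivers \eqref{W2pestimate}.

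For ingredient (i) I would invoke the machinery assembled in the preceding sections, case by case: under $(\A)$, the Hilbertian $p=2$ setting with a Ricci lower bound, Proposition \ref{prop-L2CZ-cs} applies directly; under $(\B)$, the local-to-global Theorem \ref{th-localtoglobal} covers the situation via condition \eqref{ricci+inj}; under $(\C)$, the functional-analytic Theorem \ref{th-functanalytic} yields \ref{CZ}(p) in the admissible range $1<p\leq 2$; and under $(\D)$, the non-transplanted local inequality of Theorem \ref{th-nontrasplanted}, pieced together by the finite-multiplicity covering Lemma \ref{lemma-finitemultcovering}, gives \ref{CZ}(p) for $p\in[2,+\infty)\cap(m/2,+\infty)$.

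With \ref{CZ}(p) on $C^{\infty}_{c}(M)$ in hand, ingredient (ii) comes from Proposition \ref{prop-density}: completeness combined with \ref{CZ}(p) (via Milatovic's Theorem \ref{th-Milatovich}) collapses the whole chain $W^{2,p}(M) = W^{2,p}_{0}(M) = \tW^{2,p}_{0}(M) = \tW^{2,p}(M)$. Thus any smooth $u$ with $u,f=\Delta u \in L^{p}(M)$ lies in $W^{2,p}_{0}(M)$ and can be approximated in $W^{2,p}$ by a sequence $\vp_{k} \in C^{\infty}_{c}(M)$. Applying \ref{CZ}(p) to each $\vp_{k}$ and passing to the limit produces the Hessian estimate $\|\Hess(u)\|_{L^{p}}\leq C\{\|u\|_{L^{p}}+\|f\|_{L^{p}}\}$. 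In case $(\A)$ this step is already carried out explicitly in Corollary \ref{cor-L2Hessian}.

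For ingredient (iii), each of the four hypothesis sets implies a lower Ricci bound $\ric\geq -K$, either by direct assumption in $(\A)$ or as a consequence of the stronger curvature control in $(\B), (\C), (\D)$. The Cheng-Thalmaier-Thompson Theorem \ref{th-CTT} then applies to $u\in\tW^{2,p}(M)$ and delivers the $L^{p}$-gradient bound directly. As an alternative, one may separately use the interpolation inequality \eqref{interpolation2} of Coulhon-Duong (extended to $\tW^{2,p}$ via Milatovic) for $1<p\leq 2$, and Proposition \ref{prop-gradient-Hessian} for $p>2$, which extracts the gradient bound from the Hessian estimate already obtained. The principal obstacle is not conceptual but bookkeeping: one must check that in each of the four hypothesis sets the admissible exponent $p$ and the geometric data absorbed into $C$ are compatible across all three ingredients, so that a single constant $C=C(m,p,\mathrm{Geom}(M))$ controls both the Hessian and gradient norms simultaneously.
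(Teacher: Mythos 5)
Your proposal is correct and takes essentially the same route as the paper, which presents Theorem~\ref{th-globalW2p} explicitly as a summary of the preceding sections: ingredient (i) is Proposition~\ref{prop-L2CZ-cs}, Theorem~\ref{th-localtoglobal}, Theorem~\ref{th-functanalytic} and Theorem~\ref{th-nontrasplanted} in the four respective cases; ingredient (ii) is the density upgrade of Section~7 (either via Hessian cut-offs or, as you chose, via Proposition~\ref{prop-density} and Theorem~\ref{th-Milatovich}); and ingredient (iii) is Theorem~\ref{th-gradient-milatovich}, Theorem~\ref{th-CTT} or Proposition~\ref{prop-gradient-Hessian}, each applicable since every hypothesis set implies a lower Ricci bound. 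The only small economy you could note is that in cases $(\A)$ and $(\C)$, where $p\leq 2$, Theorem~\ref{th-gradient-milatovich} already gives the gradient estimate on any complete manifold without appealing to curvature at all.
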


In conclusion of this survey we would like to mention a concrete situation where this result applies.
\smallskip

For any given  constant $c \leq 0$, let  $\mm^{m+1}_{\s_{c}}$ denote the spaceform (model manifold) of constant sectional curvature $\sect \equiv c$. Thus, $\mm^{m+1}_{\s_{0}} = \rr^{m+1}$ and $
\mm^{m+1}_{\s_{-1}} = \hh^{m+1}$.

We consider an oriented, isometric immersion $f : M \to \mm^{m+1}_{\s_{c}}$ of the complete, $m$-dimensional Riemannian manifold $(M,g)$ into $\mm^{m+1}_{\s_{c}}$. Its Gauss map is denoted by $\nu$. A widely studied family of such hypersurfaces are those with  constant mean curvature (CMC for short) $\bH (x) = H \nu(x)$, $H \in \rr$. By an isoperimetry argument, the volume of unit balls of $M$ does not collapse at infinity, namely:
\[
\inf_{x \in M} \vol B_{1}(x) = v>0.
\]
Thus, if we also assume that the hypersurface has bounded second fundamental form
\[
| \bA | \in L^{\infty}(M)
\]
then, by Gauss equations,
\[
\| \sect \|_{L^{\infty}} = S <+\infty.
\]
and it follows from \cite{CGT-JDG} that also the injectivity radius is lower bounded by a positive constant:
\[
r_{\inj}(M) = i >0.
\]
Therefore, Theorem \ref{th-globalW2p} applies and gives that, for any $1<p<+\infty$, there exists a constant $C=C(m,p,S,i)>0$ such that if $u \in C^{\infty}(M)$ is a solution of the Poisson equation $\Delta u = f$ with $f,u \in L^{p}(M)$ then the global $W^{2,p}$ estimate \eqref{W2pestimate} is satisfied.

Now, interesting information on the extrinsic geometry of the CMC hypersurface $f: M \to \mm^{m+1}_{\s_{c}}$ is encoded in the kernel of its {\it stability operator}, which is the Schr\"odinger operator
\[
L = \Delta  +( | {\mathbf A}|^{2}+m c).
\]
The solutions $u \in C^{\infty}(M)$ of the corresponding equation
\[
Lu =0
\]
give rise to the vector space $\CJ(M)$ of the {\it Jacobi functions}\footnote{sometimes called {\it Jacobi fields} but this terminology may be confusing.} of the hypersurface. From the previous discussion we deduce the validity of the following result.
\begin{corollary}
 Let $f : M \to \mm^{m+1}_{\s_{c}}$ be a complete, CMC hypersurface with bounded second fundamental form in the spaceform $\mm^{m+1}_{\s_{c}}$ of constant curvature $c \leq 0$. Let $u \in \CJ(M)$ be a Jacobi function. If $u \in L^{p}(M)$ for some $p \in (1,+\infty)$ then
 \[
 \| \nabla u \|_{L^{p}} + \| \Hess(u) \|_{L^{p}} \leq C \| u \|_{L^{p}} <+\infty.
 \]
where $C>0$ is an absolute constant independent of $u$.
\end{corollary}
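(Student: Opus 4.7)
The plan is to reduce the Jacobi equation $Lu=0$ to a Poisson equation $\Delta u = f$ with $f \in L^p(M)$, and then to invoke Theorem \ref{th-globalW2p} under the geometric set of assumptions ($\B$). The potential $V(x) := |\bA|^2(x) + mc$ appearing in the stability operator is, by hypothesis, uniformly bounded:
\[
\| V \|_{L^\infty} \leq \| \bA \|_{L^\infty}^{2} + m |c| =: V_0 < +\infty.
\]

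First I would rewrite $Lu = 0$ as the Poisson equation
\[
\Delta u = -V(x)\, u =: f,
\]
with $u \in C^{\infty}(M)$ and, by the boundedness of $V$, $\| f \|_{L^p} \leq V_0 \| u \|_{L^p} < +\infty$. Thus both $u$ and $f$ lie in $L^p(M)$, and the Jacobi function falls inside the functional framework of Theorem \ref{th-globalW2p}.

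Next I would verify that the geometric assumptions in set ($\B$) of Theorem \ref{th-globalW2p} are satisfied. By the Gauss equations for a hypersurface in a spaceform of constant sectional curvature $c \leq 0$, the intrinsic sectional curvature of $M$ is controlled bilinearly by $c$ and by $|\bA|^2$, so the hypothesis $| \bA | \in L^{\infty}$ yields $\| \sect \|_{L^{\infty}} = S < +\infty$ and, in particular, $\| \ric \|_{L^{\infty}} < +\infty$, i.e.\ ($\B_1$). For ($\B_2$), the isoperimetric non-collapse of unit balls of a complete CMC hypersurface combined with the two-sided sectional curvature bound $|\sect| \leq S$ places us in the setting of the injectivity radius estimate of \cite{CGT-JDG}, which furnishes $r_{\mathrm{inj}}(M) \geq i > 0$. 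Hence assumption ($\B$) holds for every $p \in (1,+\infty)$, which is exactly the range covered in ($\B_3$).

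Finally, applying Theorem \ref{th-globalW2p} to the solution $u$ of $\Delta u = f$ with $u,f \in L^p(M)$ gives a constant $C_0 = C_0(m,p,S,i) > 0$ such that
\[
\| \nabla u \|_{L^p} + \| \Hess(u) \|_{L^p} \leq C_0 \bigl( \| u \|_{L^p} + \| f \|_{L^p} \bigr) \leq C_0 ( 1 + V_0 ) \| u \|_{L^p},
\]
which is the desired estimate with $C = C_0(1+V_0)$. The only mildly delicate points are the invocations of the Gauss equations (to upgrade the bound on $\bA$ to a bound on $\sect$) and of the Cheeger--Gromov--Taylor injectivity radius lemma (to ensure ($\B_2$)); neither presents a real obstacle, and the rest is just a rewriting of the Jacobi equation as a Poisson equation with bounded potential.
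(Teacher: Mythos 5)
Your proof is correct and follows essentially the same route as the paper: rewrite the Jacobi equation $Lu=0$ as the Poisson equation $\Delta u = -(|\bA|^2+mc)u$ with an $L^p$ right-hand side controlled by $V_0\|u\|_{L^p}$, verify assumption set ($\B$) of Theorem \ref{th-globalW2p} via the Gauss equations (bounded $\bA$ gives bounded sectional curvature) and the Cheeger--Gromov--Taylor injectivity radius estimate (using the isoperimetric non-collapse of unit balls), and then apply the global $W^{2,p}$ estimate. This is exactly the paper's argument, with the constant $C=C_0(1+V_0)$ made explicit.
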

When $m=2$, $\mm^{m+1}_{\s_{c}}=\rr^{3}$ and $f:M \to \rr^{3}$ is a $k$-unduloid, the dimension of the space $\CJ(M) \cap L^{2}(M)$ is studied e.g. in \cite{KKR}, by N. Korevaar, R. Kusner and J. Ratzkin, in connection with the singularities of the moduli space of CMC surfaces.  Due to the periodicity of its $k$-ends, a $k$-unduloid satisfies the assumptions of the Corollary. Another class of CMC hypersurfaces to which the Corollary applies is that of {\it finite total curvature} hypersurfaces. This means that
\[
\left \vert \bA - \frac{1}{m}\bH \right\vert^{2} \in L^{q}(M),
\] 
with $q \geq m/2$. In this setting, in order to avoid nonexistence issues, one typically asks for the validity of the compatibility condition
\[
H^{2} + c \leq 0.
\]
Thus, for instance, in the Euclidean space $\mm^{m+1}_{\s_{c}} = \rr^{m+1}$, this condition forces $H=0$, i.e. $f$ is a minimal immersion. Since, in the finite total curvature assumption, one has $\left \vert \bA - \frac{1}{m}\bH \right\vert(x) \to 0$ as $x \to \infty$, see e.g. \cite{PV-DJA}, we are still in the position to apply the Corollary, as claimed.

\section{A very brief account on very recent develoments}

After the first draft of this survey was posted on arXiv, many important and beautiful results on the global Calder\'on-Zygmund theory appeared.

\subsection{Counterexamples}

\begin{itemize}
\item In the paper \cite{MV}, by using a localized version of the De Philippis-Zimbron arguments, L. Marini and G. Veronelli are able to construct a smooth, complete, $m$-dimensional Riemannian manifold $(M,g)$ with $\sect >0$ on which \eqref{CZ}(p) is violated for  $p > m$. This, in particular, implies that there are complete manifolds without \eqref{CZ}(p) where the $L^{p}$-gradient estimates hold, thus answering in the affirmative the question asked by Devyver; see the discussion after Proposition \ref{prop-gradient-Hessian}.
\item In the subsequent paper \cite{HMRV}, S. Honda, L. Mari, M. Rimoldi and G. Veronelli use a clever trick to show that, actually, counterexamples to \eqref{CZ}(p) with $\sect \geq 0$ (and nontrivial topology) can be obtained on the whole scale $2<p<+\infty$.
\item In the above mentioned \cite{MV} by Marini-Veronelli, elaborating on the examples in \cite{GP-AdvMath} and \cite{Li-AGAG}, it is shown that \eqref{CZ}(2) may fail if the Ricci lower bound is replaced by $\sect \geq -\l(\dist(x,o))$, where $\l(t)>0$ is any increasing function such that $\l(t) \to +\infty$ as $t \to +\infty$. More generally, they obtain counterexamples to \eqref{CZ}(p) in any dimension $m = \dim M \geq 2$, and for every $1<p<+\infty$. 
\end{itemize}

\subsection{Positive results}
\begin{itemize}
 \item Let $1<p<2$. In the recent preprint \cite{BDG}, using a mixture of probabilistic techniques and heat kernel estimates in the spirit of \cite{CD-CPAM}, R. Baumgarth, B. Devyver and B. G\"uneysu prove that the generalized volume doubling condition ($\C_{2}$) in Theorem \ref{th-globalW2p} can be removed. As a matter of fact, they show much more than this: namely, that a $C^{1}$-bound on the Riemann tensor is enough to have the validity of covariant Riesz transform estimates on forms and, as a direct consequence, of \eqref{CZ}(p). It is a striking result  obtained by J. Cao, L.-J. Cheng and A. Thalmaier in the very recent \cite{CCT} that the validity of \eqref{CZ}(p) can be proved in the sole assumption $\ric \geq - K$.
 \item Let $2<p<+\infty$. In the above quoted \cite{CCT}, Cao-Cheng-Thalmaier are also able to prove, for the first time, the validity of \eqref{CZ}(p) in unbounded curvature settings. More precisely, by keeping $\ric \geq -K$, they assume that $|\riem|^{2}+ |D \ric|^{2} \leq H(x)$ where the function $H(x) \geq 0$ belongs to a certain {\it Kato class}. The proof involves stochastic machineries. As a consequence, they can deduce a new density result for the Sobolev space $W^{2,p}(M)$; see Proposition \ref{prop-density}.
\end{itemize}

\appendix
\addcontentsline{toc}{section}{Appendices}

\section{Estimates of the Euclidean radius}\label{appendix-euclidean}
This section aims to show that a lower estimate of the Euclidean radius cannot depend solely on an injectivity radius bound (and an upper bound of the sectional curvature).\smallskip

It is known that closed hyperbolic $m$-dimensional manifolds may have arbitrarily large injectivity radii, \cite{Fa-Tata}. Thus, given $m \in \nn_{\geq 2}$, we find  a sequence $i_{k} \to +\infty$ such that, for every $k \in \nn$ there exists a closed $m$-dimensional Riemannian manifold $M_{k}:=(M_{k},g_{k})$ with the following properties:
\begin{enumerate}
 \item $\sect_{M_{k}} \equiv -1$.
 \item $r_{\inj}(M_{k}) = i_{k}$.
\end{enumerate}
Now, for every $k \in \nn$ we scale the metric $g_{k}$ by its injectivity radius thus obtaining a new Riemannian manifold $\tM_{k} := (M_{k} ,\tg_{k} = i_{k}^{-2}g_{k})$ such that:
\begin{enumerate}
 \item $\widetilde \sect_{k} \equiv -i^{2}_{k}$
 \item $r_{\inj}(\tM_{k}) = 1$.
\end{enumerate}
Suppose now, by contradiction, that, for any $m \in \nn_{\geq 2}$ and $i  \in \rr_{>0}$, there exists a universal constant $C = C(m,i)>0$ with the following property:\smallskip

\noindent{\it for every complete Riemannian manifold $(M,g)$ of dimension $\dim M =m$ and injectivity radius $r_{\inj}(M) = i$, the Euclidean radius $r_{\euc}(M)$ of $M$ satisfies $ r_{\euc}(M) \geq C.$}\smallskip

Specifying this inequality to the sequence $\tM_{k}$ we get
\[
r_{\euc}(\tM_{k}) \geq C,
\] 
for some constant $C = C(m)>0$. This implies that, having fixed $\widetilde o_{k} \in \tM_{k}$, there exists a coordinate chart $\widetilde\varphi_{k}: \tB_{C}(\tilde o_{k}) \to \rr^{m}$ such that
\[
2^{-1}\cdot \delta_{ij} \leq (\tg_{k})_{ij} \leq 2\cdot \delta _{ij}.
\] 
Let $R = R(m) >0$ be defined by (recall that $r_{\inj}(\tM_{k})=1$)
\[
R = \min ( 1, C ).
\]
Observe that
\[
\tilde \varphi_{k} (\tB_{R/2}(\widetilde o_{k})) \subseteq \bb_{R/\sqrt{2}}(0)
\]
and, therefore,
\begin{align} \nonumber
 \vol (\tB_{R/2}(\widetilde o_{k})) &= \int_{\widetilde\varphi_{k}(\tB_{R/2}(\widetilde o_{k}))} \sqrt {\det \tg_{k}(x)} dx\\ \nonumber
 &\leq \int_{\tilde\varphi_{k}(\tB_{R/2}(\widetilde o_{k}))} 2^{m/2} dx\\ \nonumber
 &\leq \int_{\bb_{R/\sqrt{2}}(0)} 2^{m/2} dx = C_{m}. \nonumber
\end{align}
On the other hand, since $\tB_{R/2}(\widetilde o_{k})$ is isometric to the ball of the same radius in $\hh^{m}_{-i_{k}^{2}}$, we have
\[
\vol (\tB_{R/2}(\widetilde o_{k}))) = \omega_{m} \int_{0}^{R/2} \{i_{k}^{-1} \sinh(i_{k} t)\}^{m-1} dt \to +\infty.
\]
as $k \to +\infty$. Contradiction.

\section{Poisson equation on  limit spaces}\label{section-singularPoisson} 

The proof of Theorem \ref{th-DZ} by De Philippis and Zimbr\'on, showing that the Calder\'on-Zygmund constant cannot depend solely on a lower sectional curvature bound, is slightly different from the original one. Indeed, rather than harmonic function theory, it makes use of the Poisson equation  on the limit space. In the assumptions of the theorem, this limit space supports a (non-constant) solution which is approximated by smooth solutions, of corresponding Poisson equations, on the converging sequence.

Let
\[
\CM(m,D,K)=\{ (M,g) \text{ cpt Riem. manifold}: \dim M =m, \, \diam (M) \leq D, \, \ric \geq -K \}.
\]
Its (compact) closure in the measured Gromov-Hausdorff (mGH) topology is denoted by $\overline{\CM(m,D,K)}$. Observe that, by volume comparison and the uniform bound $D$ of the diameter, there exists $V=V(m,D,K)>0$ such that $\vol X \leq V$ for every $X \in \overline{\CM(m,D,K)}$.
\begin{proposition}
 Let $(M_{k},g_{k},x_{k}) \in \CM(m,D,K)$ be a pointed sequence converging in the $\mathrm{mGH}$ topology to a non collapsed limit space $(X,x_{\infty},\mu_{\infty}) \in \overline{\CM(m,D,K)}$.
Let also $2 \leq  p<+\infty$. Then, there exist non-constant functions $u_{k} \in C^{2}(M_{k})$, $g_{k} \in \Lip(M_{k})$ and $u_{\infty} \in W^{1,2}(X) \cap L^{p}(X)$, $g_{\infty} \in L^{p}(X)$ such that:

\begin{itemize}
 \item [(a)] $\avint_{M_{k}}g_{k} = 0$ and $\avint_{X} g_{\infty} = 0$;\smallskip
 
 \item  [(b)] $\Delta_{M_{k}}u_{k} = g_{k} \to g_{\infty}  =\Delta_{X} u $ in the strong $L^{p}$ (hence $L^{2}$) sense of K. Kuwae and T. Shioya, and S. Honda, \cite[Definition 1.1, Proposition 3.31]{Ho-Crelle} \cite[Definition 2.12]{Ho-Memoirs}. In particular, $\| \Delta_{M_{k}} u_{k} \|_{L^{p}} \to \| \Delta_{X} u_{\infty}\|_{L^{p}}$; \smallskip
 
 \item [(c)] $u_{k} \to u$ in the strong $W^{1,2}$ sense, as $k \to +\infty$;\smallskip
 
 \item [(d)] $\| u_{k}\|_{W^{1,p}} \leq L$ for some constant $L = L(p,m,D,K)>0$. In particular:\smallskip
 
 	\begin{itemize}
 	\item [(d.1)] $u_{k} \to u_{\infty}$ in the strong $L^{p}$ (hence $L^{2}$) sense and, hence, $\| u_{k} \|_{L^{p}} \to \|u_{\infty}\|_{L^{p}}$;\smallskip
	\item [(d.2)] $\nabla^{M_{k}} u_{k} \to \nabla^{X} u_{\infty}$ in the weak $L^{p}$ (hence $L^{2}$) sense.
	\end{itemize}

\end{itemize}
\end{proposition}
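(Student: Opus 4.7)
The plan is to first build the desired solution on the limit space $X$, then pull everything backwards to each $M_k$ via mGH approximation, and finally pass to the limit using the convergence theory on RCD spaces. Since $X$ is a non-collapsed mGH limit of compact manifolds with $\ric \geq -K$ and bounded diameter, it is a compact, connected, non-collapsed $\mathrm{RCD}(-K,m)$ space and carries a self-adjoint, nonpositive Laplacian $\Delta_X$. Pick any non-constant Lipschitz function $h$ on $X$ and set $g_\infty := h - \avint_X h$, so that $g_\infty \in L^\infty(X)\cap L^p(X)$ has zero mean. The spectral gap on the compact connected RCD space furnishes a unique $u_\infty \in W^{1,2}(X)$ with $\avint_X u_\infty = 0$ and $\Delta_X u_\infty = g_\infty$, and De Giorgi--Moser iteration available in the RCD setting promotes $u_\infty$ to $W^{1,p}(X)\cap L^\infty(X)$.

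Next, fix $\varepsilon_k$-mGH approximations $\Phi_k \colon M_k \to X$ with $\varepsilon_k \to 0$. Set $\tilde g_k := g_\infty \circ \Phi_k$ (a bounded Borel function) and replace it, if needed, by a short-time heat-semigroup smoothing to land in $\mathrm{Lip}(M_k)$. Put $g_k := \tilde g_k - \avint_{M_k}\tilde g_k$, so that $\avint g_k = 0$. The non-collapsing hypothesis yields $\vol(M_k)\to \mathcal{H}^m(X)$, hence $\avint \tilde g_k \to 0$, and $g_k \to g_\infty$ strongly in $L^p$ in the Kuwae--Shioya--Honda sense. On each compact $M_k$ the Poisson problem $\Delta_{M_k} u_k = g_k$ with $\avint u_k = 0$ has a unique smooth solution. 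Applying Theorem \ref{th-CTT}, whose hypotheses hold uniformly in $k$ under $\ric \geq -K$, and combining with the Neumann--Poincar\'e inequality recalled in the Remark preceding Proposition \ref{prop-Neumann}, one obtains a uniform bound $\|u_k\|_{W^{1,p}} \leq L = L(p,m,D,K,\|g_\infty\|_{L^\infty})$, proving (d).

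With these uniform $W^{1,p}\subseteq W^{1,2}$ estimates in hand, Honda's pre-compactness theorems for Sobolev functions along mGH-converging RCD sequences extract a subsequence such that $u_k \to u_\ast$ strongly in $L^2$ (and in $L^p$, by a Poincar\'e--interpolation argument using the zero-mean constraint), while $\nabla u_k \rightharpoonup \nabla u_\ast$ weakly in $L^2$ and in $L^p$, for some $u_\ast \in W^{1,2}(X)\cap L^p(X)$ with $\avint u_\ast = 0$. Mosco-convergence of the Cheeger energies transmits the Poisson equation to the limit, giving $\Delta_X u_\ast = g_\infty$; uniqueness of the zero-mean solution forces $u_\ast = u_\infty$, which yields (d.1) and (d.2). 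Finally, the energy identity $\int_{M_k}|\nabla u_k|^2 = -\int_{M_k} u_k g_k \to -\int_X u_\infty g_\infty = \int_X|\nabla u_\infty|^2$, combined with the already established weak convergence, upgrades to strong $W^{1,2}$ convergence, proving (c); items (a) and (b) are built into the construction.

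The main obstacle is the final passage to the limit: one must invoke, in a fully rigorous way, the well-developed convergence theory for Sobolev spaces, Cheeger energies and linear Laplacian-type equations along mGH-converging RCD sequences (Kuwae--Shioya, Honda, Ambrosio--Honda, Gigli--Mondino--Savar\'e). A closely related delicate point is that the constant in Theorem \ref{th-CTT} truly depends only on $m,p,K$ and not on the particular manifold $M_k$; this uniformity, together with the uniform diameter bound $D$ entering the Poincar\'e constant, is precisely what makes the approximation scheme close.
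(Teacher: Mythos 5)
Your proposal reverses the direction of the paper's construction: you build $g_\infty, u_\infty$ on the limit space $X$ first and pull them back to $M_k$ via mGH approximation maps, whereas the paper constructs uniformly Lipschitz bump functions $g_k$ directly on the manifolds $M_k$ (cut-offs of a fixed ball $B_R(x_k)$ with Lipschitz constant $2/R$ independent of $k$), applies Honda's $L^p$-precompactness to extract $g_\infty$ in the limit, and only then solves the Poisson equations. This is a genuinely different route and the underlying idea is reasonable, but as written it has two gaps.

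First, the pull-back step is not as innocent as you suggest. The mGH approximation maps $\Phi_k$ are merely Borel, so $g_\infty\circ\Phi_k$ is Borel and a heat-semigroup mollification $P^{M_k}_{t_k}(g_\infty\circ\Phi_k)$ is indeed Lipschitz; but you then need to show that, for a suitably chosen sequence $t_k\to 0$, these mollified functions still converge to $g_\infty$ in the strong $L^p$ sense of Kuwae--Shioya--Honda, with uniform Lipschitz (or at least asymptotic uniform continuity) control. This requires semigroup/spectral convergence results along mGH sequences (Ambrosio--Honda, Gigli--Mondino--Savar\'e) that you do not invoke, and one must also check that the mollification does not destroy the uniform $L^\infty$-bound that your later steps rely on. The paper's construction sidesteps all of this: the $g_k$ are $\frac{2}{R}$-Lipschitz by design, which gives the required asymptotic uniform continuity for free.

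Second, and more seriously, your derivation of the uniform $W^{1,p}$-bound (item (d)) is circular. Theorem~\ref{th-CTT} gives $\|\nabla u_k\|_{L^p}\le C(m,p,K)\bigl(\|u_k\|_{L^p}+\|\Delta u_k\|_{L^p}\bigr)$, and the Neumann--Poincar\'e inequality (with the zero-mean normalization) gives $\|u_k\|_{L^p}\le C_P(m,p,K,D)\,\|\nabla u_k\|_{L^p}$. Substituting one into the other produces
\[
\|\nabla u_k\|_{L^p}\ \le\ C\,C_P\,\|\nabla u_k\|_{L^p} + C\,\|g_k\|_{L^p},
\]
which is vacuous unless $C\,C_P<1$; nothing in the statements you cite guarantees this. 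The paper breaks this circle differently: it first records that $\|u_k\|_{L^2}$ is bounded from the strong $W^{1,2}$ convergence of item (c), then invokes the pointwise $L^\infty$-gradient estimates of Zhang--Zhu \cite[Cor.~4.2]{ZZ-AdvMath} or Jiang \cite[Thm.~3.1]{Ji-JFA}, which bound $\|\nabla u_k\|_{L^\infty}$ directly in terms of $\|u_k\|_{L^2}$ and $\|g_k\|_{L^q}$ (both already controlled, the latter by $\|g_k\|_{L^\infty}\le1$ and the uniform volume bound). This yields $\|\nabla u_k\|_{L^p}\le C$ non-circularly, and the Neumann--Poincar\'e inequality then gives $\|u_k\|_{L^p}\le C$, whence the $W^{1,p}$-bound. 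To repair your argument you would need either an $\varepsilon$-form of the gradient estimate (small coefficient in front of $\|u\|_{L^p}$) or, as in the paper, an $L^\infty$-gradient estimate that uses the already-known $L^2$-control of $u_k$ rather than its unknown $L^p$-norm.

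Two smaller remarks: (i) the non-constancy of the limit $g_\infty$ is automatic in your approach, while the paper has to prove it (via the lower bound $\|g_k\|_{L^p}\ge\frac12 V^{1/p}$ combined with $\avint g_k=0$); your route simplifies this point. (ii) Your closing paragraph correctly identifies the mGH convergence machinery as the crux, but note that the paper's chosen references (\cite{Ho-Crelle}, \cite{Ho-Memoirs}) handle this entirely within the Ricci-limit setting, and the theorem invoked for (c) is precisely \cite[Thm.~1.1]{Ho-Memoirs}.
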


\begin{proof}
Since $\diam M_{k} \to \diam X >0$ and, for any $r>0$, $\vol B^{M_{k}}_{r}(x_{k}) \to \vol B^{X}_{r}(x_{\infty})$, then, up to choosing  $k \gg 1$, we can assume that $\diam M_{k} \geq \frac{1}{2}\diam X$ and $\vol M_{k} \geq \frac{1}{2} \vol X$. Note also that, by volume comparison, $\vol B^{M_{k}}_{r}(x_{k}) \leq \a(r)$ for some (exponential like) function depending only on $m$ and $K$.

With this preparation, we choose  $0<R< \frac{1}{4} \diam X$ such that $\a(R) \leq \frac{1}{4}\vol X$. Thus, for $k \gg 1$,
\[
\vol B^{M_{k}}_{R}(x_{k}) \leq \a(R) \leq \frac{1}{4}\vol X \leq \frac{1}{2}\vol M_{k}.
\]
Next, we define $f_{k}:M_{k}\to [0,1]$ to be a Lipschitz function satisfying
\[
i)\, \supp(f_{k}) \subseteq B^{M_{k}}_{R}(x_{k}), \quad ii) \, f_{k} =1\, \text{ on } B^{M_{k}}_{R/2}(x_{k}), \quad iii) \, \| \nabla f_{k} \|_{L^{\infty}} \leq \frac{2}{R},
\]
and we note that
\[
\avint_{M_{k}}f_{k} \leq \frac{\vol B^{M_{k}}_{R}(x_{k})}{\vol M_{k}} \leq \frac {1}{2}.
\]
Finally, we let
\[
g_{k} = f_{k} - \avint_{M_{k}} f_{k}.
\]

Clearly, by definition, $\avint_{M_{k}} g_{k} = 0$ and $ \| g_{k} \|_{L^{\infty}} \leq 1$. Moreover, $g_{k} \geq \frac{1}{2}$ on $B^{M_{k}}_{R/2}(x_{k})$ showing that, in particular, $\| g_{k} \|_{L^{\infty}} \geq \frac{1}{2}$ and  $g_{k}\not \equiv const$. Due to the zero mean condition this is in fact equivalent to the fact that $g_{k} \not\equiv 0$.

From the uniform $L^{\infty}$-bound of $\{ g_{k} \}$ and the uniform volume upper bound $V$ of $\{ M_{k} \}$ we get $\| g_{k} \|_{L^{q}} \leq V^{1/q}$ for every $q >1$ and, therefore, there exists a subsequence (still denoted by $g_{k}$) that converges to $g_{\infty} \in L^{p}(X)$ in the strong $L^{p}$ (hence $L^{2}$) sense. Indeed, by  \cite[Proposition 3.19]{Ho-Crelle}, up to passing to a subsequence, $g_{k}$ converges weakly to a function $g_{\infty} \in L^{p}(X)$. On the other hand, since, by $iii)$, $\{ g_{k}\}$ are uniformly Lipschitz, then they are asymptotically uniformly continuous in the sense of \cite[Definition 3.2]{Ho-Crelle}. It follows from \cite[Remark 3.8 and Proposition 3.32]{Ho-Crelle} that $g_{k}$ converges in the strong $L^{p}$-sense to $g_{\infty}$, as claimed. In particular, by the definition of convergence, $\avint_{X}g_{\infty} =0$ and, since $\| g_{k} \|_{L^{p}} \to \| g_{\infty}\|_{L^{p}}$ with $\| g_{k} \|_{L^{p}} \geq \frac{1}{2}V^{1/p}$, we deduce that $g_{\infty}$ is non-constant (i.e.. non-zero).

Now, let $u_{k} \in C^{2}(M_{k})$ be the (obviously non-constant) unique solution of the Poisson equation
\[
\Delta_{g_{k}} u_{k} = g_{k},\quad \text{on }M_{k},
\]
satisfying
\[
\avint_{M_{k}} u_{k} = 0.
\]
Then, by \cite[Theorem 1.1]{Ho-Memoirs}, $u_{k}$ converges in the strong $W^{1,2}$-sense to the unique solution $u_{\infty}\in  W^{1,2}(X)$ of the Poisson equation
\[
\Delta_{X} u_{\infty} = g_{\infty} \in L^{p},\quad \text{on }X
\]
satisfying $\avint_{X} u_{\infty} = 0$.

We claim that $\{ u_{k}\}$ is a bounded sequence in $W^{1,p}(M_{k})$ and, in particular, up to passing to a subsequence, $u_{k}\to u_{\infty}$ strongly in the $L^{p}$ sense and $\nabla^{M_{k}} u_{k} \to \nabla^{X}u_{\infty}$ weakly in the $L^{p}$-sense. Indeed, since $u_{k} \to u_{\infty}$ strongly in $W^{1,2}$ then, in particular, $\| u_{k} \|_{L^{2}}$ is a bounded sequence. Whence, recalling that $\| g_{k} \|_{L^{q}} \leq V^{1/q}$ for every $q>1$ and using the $L^{\infty}$-gradient estimates in \cite[Corollary 4.2]{ZZ-AdvMath} or \cite[Theorem 3.1]{Ji-JFA}, we deduce that $\| \nabla^{M_{k}} u_{k} \|_{L^{\infty}} \leq C$ for some uniform constant $C= C(m,D,K)>0$. This implies that $\| \nabla^{M_{k}}u_{k}\|_{L^{p}} \leq C_{1}$ for a suitable constant $C_{1}=C_{1}(p,m,D,K)>0$. Whence, using the Neumann-Poincar\`e inequality (or, again,\cite[Corollary 4.2]{ZZ-AdvMath})  yields that $\| u_{k}\|_{L^{p}} \leq C_{2} $ for some constant $C_{2}= C_{2}(p,m,D,K)>0$. It follows that $\{ u_{k} \} \subset W^{1,p}(M_{k})$ is a bounded sequence, as claimed. To conclude, we now apply the precompactness result contained in \cite[Theorem 4.9]{Ho-Crelle}.
\end{proof}

%%% manual references %%%

\end{document}